\theoremstyle{plain}
\newtheorem{thm}{Theorem}[section]
\newtheorem{cor}[thm]{Corollary}
\newtheorem{lem}[thm]{Lemma}
\newtheorem{prop}[thm]{Proposition}
\newtheorem{obs}[thm]{Observation}
\theoremstyle{definition}
\newtheorem{definition}[thm]{Definition}
\newtheorem{qu}[thm]{Question}
\newtheorem{conj}[thm]{Conjecture}
\newtheorem{rem}[thm]{Remark}
\newtheorem{ex}[thm]{Example}
\newtheorem{prob}[thm]{Problem}
\theoremstyle{plain}
\newtheorem{thmintro}{Theorem}
\newtheorem{defintro}[thmintro]{Definition}
\newcommand{\I}{\mathbb{I}}
\newcommand{\Q}{\mathbb{Q}}
\newcommand{\HH}{\mathbb{H}}
\newcommand{\R}{\mathbb{R}}
\newcommand{\C}{\mathbb{C}}
\newcommand{\Z}{\mathbb{Z}}
\newcommand{\N}{\mathbb{N}}
\newcommand{\CP}{\mathbb{CP}}
\newcommand{\fg}{\mathfrak{g}}
\newcommand{\fm}{\mathfrak{m}}
\newcommand{\fh}{\mathfrak{h}}
\newcommand{\cB}{\mathcal{B}}
\newcommand{\cC}{\mathcal{C}}
\newcommand{\cD}{\mathcal{D}}
\newcommand{\cH}{\mathcal{H}}
\newcommand{\cI}{\mathcal{I}}
\newcommand{\cJ}{\mathcal{J}}
\newcommand{\cR}{\mathcal{R}}
\newcommand{\cU}{\mathcal{U}}
\newcommand{\Oh}{\mathcal{O}}
\newcommand{\V}{\mathcal{V}}
\newcommand{\im}{\operatorname{im}}
\newcommand{\del}{\partial}
\newcommand{\delbar}{{\bar{\partial}}}
\newcommand{\Fbar}{\bar{F}}
\newcommand{\pr}{\operatorname{pr}}
\newcommand{\rk}{\operatorname{rk}}
\newcommand{\Id}{\operatorname{Id}}
\newcommand{\Pro}{\mathbb{P}}
\newcommand{\supp}{\operatorname{supp}}
\DeclareMathOperator{\gr}{gr}
\DeclareMathOperator{\mult}{mult}
\newcommand{\cHf}{\cH^{form}}
\newcommand{\cHDf}{\cH^{form}}
\newcommand{\CM}{\mathcal{CM}}
\newcommand{\DR}{\mathcal{DR}}
\newcommand{\HDR}{\mathcal{HDR}}
\newcommand{\HDRC}{\mathcal{HDRC}}
\newcommand{\RB}{\mathcal{RB}}
\newcommand{\FS}{\mathcal{FS}}
\newcommand{\img}[2][1]{\begin{gathered}\includegraphics[scale=#1]{#2}\end{gathered}}
\DeclareMathOperator{\Hom}{Hom}
\newcommand{\Cdot}{{\raisebox{-0.7ex}[0pt][0pt]{\scalebox{2.0}{$\cdot$}}}}
\mathchardef\mhyphen="2D
\let\oldabstract\abstract
\let\oldendabstract\endabstract
\renewenvironment{abstract}
{%
	{\list{}{\addtolength{\leftmargin}{0em} 
			\listparindent 0em%
			\itemindent    \listparindent%
			\rightmargin   \leftmargin%
			\parsep        \z@ \@plus\p@}%
		\item\relax}%
	{\endlist}%
	\oldabstract}
{\oldendabstract}
\title{On linear combinations of cohomological invariants of compact complex manifolds}
\author{Jonas Stelzig} 
\date{}
\begin{document}

\maketitle
\begin{abstract}
We prove that there are no unexpected universal integral linear relations and congruences between Hodge, Betti and Chern numbers of compact complex manifolds and determine the linear combinations of such numbers which are bimeromorphic or topological invariants. This extends results in the Kähler case by Kotschick and Schreieder. We then develop a framework to tackle the more general questions taking into account `all' cohomological invariants (e.g. the dimensions of the higher pages of the Frölicher spectral sequence, Bott-Chern and Aeppli cohomology). This allows us to reduce the general questions to specific construction problems. We solve these problems in many cases. In particular, we obtain full answers to the general questions concerning universal relations and bimeromorphic invariants in low dimensions.
\end{abstract}
\section*{Introduction}

In \cite{kotschick_hodge_2013}, Kotschick and Schreieder solve the following three problems for compact K\"ahler manifolds:

\begin{enumerate}
\item Which linear relations between Hodge and Chern numbers are valid for all compact K\"ahler manifolds?
\item Which linear combinations of Hodge and Chern numbers are bimeromorphic invariants of all compact K\"ahler manifolds?
\item Which linear combinations of Hodge and Chern numbers are topological invariants of all compact K\"ahler manifolds?
\end{enumerate}

Roughly speaking, the answer for all three questions is: `Only the expected ones'. The third question was famously asked by Hirzebruch \cite{hirzebruch_problems_1954}. The version of this question concerning the Chern numbers only had previously been solved by Kotschick \cite{kotschick_characteristic_2009}, \cite{kotschick_topologically_2012}, using the complex bordism ring. A key new ingredient in \cite{kotschick_hodge_2013} which allowed the systematic treatment of Hodge numbers was the consideration of the Hodge ring of K\"ahler manifolds, that is, the graded ring of $\Z$-linear combinations of compact complex K\"ahler manifolds, where two manifolds are identified if they have the same Hodge numbers. Somewhat surprisingly, it is finitely generated.\medskip

The method of \cite{kotschick_hodge_2013} was recently applied to treat linear relations between Hodge numbers of varieties in positive characteristic in \cite{de_bruyn_hodge_2020}. By a different method, it was  shown that there are no unexpected polynomial relations between the Hodge numbers of compact K\"ahler manifolds \cite{paulsen_construction_2020} or varieties in positive characteristic \cite{de_bruyn_construction_2020}.\medskip

In complex dimension $\geq 3$, there are many open questions concerning the relation of general compact complex manifolds to K\"ahler manifolds on the one side and merely almost complex manifolds on the other side. For example, there is no known topological obstruction distinguishing complex from almost complex manifolds. On the other hand, there exist manifolds admitting both K\"ahler and non-K\"ahler structures, but there is no known example of a manifold for which every complex structure is K\"ahler (in particular this is unknown even on $\CP^n$, $n\geq 3$). In complex dimension $2$, the situation is better understood, and Hodge and Chern numbers play a key role in distinguishing between almost complex, complex and K\"ahler manifolds.\medskip

A systematic study of the spread of cohomological invariants for general complex manifolds therefore appears desirable. In this article, we take a first step in this direction by tackling questions $1$ -- $3$ for arbitrary compact complex manifolds. We do so both in a narrower and a wider sense:\medskip

First, in sections \ref{sec: Hodge, Betti, Chern}, \ref{sec: bimero} and \ref{sec: rat. Hirzebruch}, we answer the immediate analogues of the above questions when asked for all compact complex manifolds instead of just K\"ahler ones. The main results are the following: 
\begin{thmintro}\label{thmintro: HDRC rels}$\,$
\begin{enumerate}
\item All universal $\Z$-linear relations between the Hodge, Betti and Chern numbers of $n$-dimensional compact complex manifolds are combinations of the following:
\begin{enumerate}
\item Serre duality: $h^{p,q}=h^{n-p,n-q}$
\item Poincar\'e duality: $b_k=b_{2n-k}$
\item Degeneration of the Fr\"olicher spectral sequence: 
If $n\leq 2$, $b_k=\sum_{p+q=k} h^{p,q}$
\item Hirzebruch-Riemann-Roch: $\chi_p=td_p$
\item Euler characteristics: $\sum_{p}(-1)^p\chi_p=\chi$
\item Connected components: $h^{0,0}=b_0$
\end{enumerate}
\item The only universal congruences between the Hodge, Betti and Chern numbers of $n$-dimensional compact complex manifolds are combinations of the ones given in the first point, $b_n(X)\equiv 0 ~ (2)$ in case $n$ odd, and the congruences involving Chern numbers alone.
\end{enumerate}
\end{thmintro}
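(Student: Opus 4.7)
The plan is to follow the Kotschick--Schreieder strategy, organizing $\Z$-linear combinations of compact complex $n$-manifolds modulo the equivalence identifying manifolds with identical Hodge, Betti, and Chern numbers into a graded $\Z$-module $\HDRC_n$, and studying its image in the lattice $I_n$ of all integer tuples $(h^{p,q},b_k,c_{\alpha})$. The relations (a)--(f) cut out a sublattice $L_n\subseteq I_n$, and the containment $\operatorname{image}(\HDRC_n)\subseteq L_n$ is immediate from the classical validity of each item. The content of part (1) is the reverse inclusion over $\Q$, and the content of part (2) is saturation of $\operatorname{image}(\HDRC_n)$ in $L_n$ up to the listed exceptions ($b_n\equiv 0\pmod 2$ for $n$ odd and the pure Chern-number congruences governed by the complex bordism ring).

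To build up generators, I would combine three families: (i) the Kotschick--Schreieder Kähler generators (roughly $\CP^n$, curves of varying genera, projective bundles and blowups), (ii) classical non-Kähler models with well-understood invariants (Iwasawa, Calabi--Eckmann, Hopf manifolds), and (iii) products and blowups thereof. Family (i) alone realizes exactly those tuples in $L_n$ satisfying the Frölicher-degeneration identities $b_k=\sum_{p+q=k}h^{p,q}$. The point of families (ii)--(iii) is to break this extra relation in dimension $\geq 3$: for $n=3$ the Iwasawa manifold has $b_1=4\neq 6=h^{1,0}+h^{0,1}$, and products $\mathrm{Iw}\times X^{n-3}$ give non-Kähler examples in every higher dimension. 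Note that in dimension $\leq 2$ Frölicher degenerates universally by Kodaira's surface classification, so (c) is genuinely a universal relation in that range and the result reduces to the Kähler case for $n\leq 2$.

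The main obstacle, and the bulk of the work, is the spanning verification in dimension $n\geq 3$: one must exhibit explicit $\Z$-linearly independent invariant tuples in $L_n\setminus L_n^{\mathrm{K\ddot{a}hler}}$ whose $\Z$-span, together with the Kähler sublattice, equals $L_n$. I would organize this by dimension and by the graded weight of the invariant, choosing products whose Hodge numbers depend linearly and nondegenerately on the non-Kähler factor, and compute the resulting matrix of invariants to show it has full rank over $\Z$. Controlling the integral (not merely rational) span is delicate: it is where the combinatorics of products with $\CP^k$ factors, together with blowup formulas, are used to ensure that the discriminant of the generator matrix is $\pm 1$ after quotienting by (a)--(f).

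For the congruence part, the rational spanning in hand, I would analyze saturation prime by prime. Any nontrivial mod-$p$ relation would correspond to a vector in $L_n$ realized only by linear combinations of generators with coefficients divisible by $p$; by varying the constructed building blocks and taking disjoint unions one reduces to checking, for each $p$, that some generator hits each residue class in each Hodge coordinate not already constrained. The unavoidable exceptions are the parity of $b_n$ in odd dimensions, forced by antisymmetry of the middle intersection form together with Poincaré duality, and the congruences on Chern numbers alone, which are exactly those satisfied by the complex bordism ring and handled by Kotschick's earlier work. The subtlest point is showing that no \emph{mixed} Hodge--Chern congruences survive beyond these, which I would establish by choosing generators whose Hodge numbers and Chern numbers vary independently modulo $p$, exploiting products with Hopf-like manifolds (whose Chern numbers vanish but whose Hodge numbers do not) to decouple the two.
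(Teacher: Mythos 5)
Your overall strategy — identify a formal target lattice cut out by (a)--(f), realize generators, then check saturation — is indeed the Kotschick--Schreieder strategy the paper follows, and a large part of your sketch is sound. But there is a genuine gap in how you propose to control the \emph{integral} span, and it is not a technicality: it is precisely the point at which the paper has to introduce new geometry.

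The problem lies in family (iii). Products and blowups preserve degeneration of the Fr\"olicher spectral sequence: if the factors (or center and ambient space) have Fr\"olicher spectral sequence degenerating at $E_1$, so does the product or blowup. More quantitatively, the Fr\"olicher defect $FD(X) = s(h(X)) - dR(X)$ satisfies $FD(X\times Y) = s(h(X))\,FD(Y) + FD(X)\,dR(Y)$, so products can only propagate defects already present in the factors, multiplied by K\"ahler Hodge polynomials. With the building blocks you list (Iwasawa and deformations in degree $3$, plus K\"ahler pieces), the degree-$4$ defects you can hit are integral combinations of $\psi(A)\cdot FD(\text{threefold})$ and $\psi(B)\cdot FD(\text{threefold})$, which — using the identities $\psi(B)d = 2e$, $\psi(A)e = \psi(D)d$, $\psi(B)e = 2\psi(C)d$ in the de Rham ring — never produce the generator $\psi(C)d$ with coefficient $1$, only $2\psi(C)d$. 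So your proposal proves the rational statement but leaves a potential index-$2$ cokernel in degree $4$, and the prime-$2$ saturation step cannot be completed using products with $\CP^k$ and blowups, since those operations simply do not change the relevant defect. The paper closes exactly this gap by exhibiting two specific $8$-real-dimensional nilmanifolds (Lemma \ref{lem: snN calc}, strongly non-nilpotent structures from the Latorre--Ugarte--Villacampa classification) whose difference realizes $\psi(C)d$. You would need some analogous genuinely new $4$-fold construction; it does not come for free from your families (i)--(iii).

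Two smaller points. First, a numerical error: the Iwasawa manifold has $h^{1,0}=3$, $h^{0,1}=2$, so $h^{1,0}+h^{0,1}=5$, not $6$ — the discrepancy with $b_1=4$ is $1$, not $2$. Second, the decoupling of Hodge/Betti from Chern data is cleaner than your ``prime by prime'' sketch suggests: the paper's Observation \ref{obs: c_i=0} that \emph{all} complex nilmanifolds have vanishing Chern classes means the kernel of the Hirzebruch genus on $\cH_\ast$ is generated by manifolds that are trivial in $\Omega^U_\ast$, and the equality $\HDRC_\ast = \{(a,b)\mid\chi_\ast(a)=td_\ast(b)\}$ (Theorem \ref{thm: HDRC ring diagonal}) follows in one stroke, after which both the relations and congruences statements are immediate. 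You had the right intuition with Hopf manifolds, but the systematic use of nilmanifolds (not just Hopf) is what makes the decoupling go through for all the needed generators.
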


We note that the congruences between the Chern numbers alone are known \cite{atiyah-hirzebruch_klassen_1961}, \cite{stong_relations_1965}, \cite{hattori_relations_1966}. They are the integrality conditions imposed by the index theorem applied to all bundles associated with the tangent bundle.

\begin{thmintro}\label{thmintro: bimeromorphic invs}
A $\Z$-linear combination (resp. congruence) of Hodge and Chern numbers is a bimeromorphic invariant for all compact complex manifolds if and only if it is a linear combination (resp. congruence) of the $h^{p,0}$ and $h^{0,p}$ only.
\end{thmintro}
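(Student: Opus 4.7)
The proof has two directions; I focus on the linear-invariant statement, as the congruence version follows by passing to $\Z/m$ coefficients. For the "if" direction, the task is to show that $h^{p,0}$ and $h^{0,p}$ are bimeromorphic invariants of arbitrary compact complex manifolds. The key tool is the Dolbeault blow-up formula, now known to hold for general (possibly non-Kähler) compact complex manifolds: if $\pi \colon \tilde X \to X$ is the blow-up along a smooth submanifold $Z$ of codimension $c$, then
\[ h^{p,q}(\tilde X) = h^{p,q}(X) + \sum_{i=1}^{c-1} h^{p-i,\,q-i}(Z). \]
For $p = 0$ or $q = 0$ every correction term vanishes, giving invariance under smooth blow-ups. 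Bimeromorphic invariance then follows by dominating any bimeromorphism with a common Hironaka resolution and applying the formula along each step.

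For the "only if" direction, suppose $L = \sum a_{p,q} h^{p,q} + \sum b_I c_I$ is a bimero-invariant combination on compact complex $n$-manifolds. Restricting to Kähler manifolds, the Kotschick--Schreieder theorem implies $L|_{\text{K\"ahler}}$ is a $\Z$-linear combination of the $h^{p,0}$ modulo the linear relations that hold on all Kähler manifolds. These Kähler relations consist of the universal complex-manifold relations of Theorem~A, augmented by Hodge symmetry $h^{p,q} = h^{q,p}$ and, for $n \geq 3$, Frölicher degeneration $b_k = \sum_{p+q=k} h^{p,q}$. Using these extra identities to rewrite $L$ modulo Theorem~A puts $L$ into the form $\sum \alpha_p h^{p,0} + \sum \beta_p h^{0,p} + R$, where $R$ is a combination of $h^{p,q}$ with $p, q \geq 1$, of $b_k$, and of Chern numbers, and vanishes identically on every Kähler manifold.

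To show $R \equiv 0$ modulo Theorem~A, I would construct explicit bimeromorphic pairs of non-Kähler compact complex manifolds whose Hodge and Chern change vectors span the complement of $\langle h^{p,0}, h^{0,p} \rangle$ (modulo Theorem~A). Blowing up smooth centres $Z$ with asymmetric Hodge diamonds (so that $h^{p,q}(Z) \neq h^{q,p}(Z)$) detects each term $h^{p,q} - h^{q,p}$ via the blow-up formula, while blowing up centres whose Frölicher spectral sequence does not degenerate at $E_1$ detects each $b_k - \sum_{p+q=k} h^{p,q}$; changes in Chern numbers under these blow-ups are read off from the standard formulas. The hard part is realising centres of all required bidegrees as smooth submanifolds of compact complex ambient manifolds with enough surrounding flexibility to make the linear span argument go through. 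This is precisely the kind of "construction problem" the later sections of the paper are designed to address, and I expect the proof to invoke those existence results (or, in low dimensions, classical non-Kähler examples such as the Iwasawa manifold) to finish.
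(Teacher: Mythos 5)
The ``if'' direction is fine and matches the paper in spirit: $h^{p,0}$ and $h^{0,p}$ are bimeromorphic invariants because the blow-up formula for Dolbeault cohomology adds no correction terms in the edge bidegrees, and weak factorization/common resolution reduces a general bimeromorphism to blow-ups.

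The ``only if'' direction has two problems. First, a conceptual slip: you introduce Betti numbers and the degeneration relation $b_k=\sum_{p+q=k}h^{p,q}$, but Theorem B is stated purely in terms of Hodge and Chern numbers. There is no $b_k$ to push anywhere, so the Fr\"olicher-degeneration half of your spanning plan is addressing relations that do not appear. What you do need to neutralize after the K\"ahler reduction is just the Hodge symmetry $h^{p,q}=h^{q,p}$, not degeneration. Second, and more seriously, the heart of your argument --- constructing explicit bimeromorphic pairs of non-K\"ahler manifolds whose Hodge--Chern difference vectors span the complement of $\langle h^{p,0},h^{0,p}\rangle$ --- is stated as a plan, not proved. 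You acknowledge that realizing centres of all required bidegrees inside suitable ambient manifolds is ``the hard part'' and defer to unspecified existence results. But that step is essentially the whole content of the theorem; the spanning claim cannot be waved through by blow-ups of ``asymmetric'' centres without a degree-by-degree dimension count showing you get everything.

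The paper does something genuinely different and cleaner. It works in the Hodge--Chern ring $\cH\cC_\ast\subseteq\cH_\ast\times\Omega_\ast^U$ and reduces the statement to an identity of ideals $\cB^{\cH\cC}=\ker p^{\cH\cC}$, where $p$ is reduction modulo $(xy)$. The Hodge-only part (that $\cB=(C)=\ker p$ in $\cH_\ast$) is imported verbatim from van Dobben de Bruyn's positive-characteristic argument, since $\cH_\ast=\cHf_\ast$ is already known. The Chern part does not require constructing any new bimeromorphic pairs: it uses the Kotschick--Schreieder basis sequence $\beta_1=\CP^1,\beta_2,\dots$ for $\Omega^U_\ast$ with $\beta_i\in\cB^{\cH\cC}$ for $i\geq 2$, and the observation that $\cH\cC_\ast$ is generated by $E$, a Hopf surface $H$, and the $\beta_i$. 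A short algebra computation then shows $\cH\cC_\ast/\cB^{\cH\cC}\to\cH_\ast/\cB$ is an isomorphism, from which $\ker p^{\cH\cC}=\cB^{\cH\cC}$ follows at once. If you want to salvage your approach, you would need to actually produce the required bimeromorphic pairs, or else import the ring-theoretic shortcut: use that the relevant ideal is principal (generated by a point blow-up difference $C$) and that the extra Chern data is killed by the $\beta_i\in\cB^{\cH\cC}$.
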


\begin{thmintro}\label{thmintro: top inv}For compact complex manifolds of dimension $\geq 3$, a rational linear combination of Hodge and Chern numbers is
\begin{enumerate}
\item a universal homeomorphism invariant, if and only if it is a universal diffeomorphism invariant, if and only it is a linear combination of the Betti numbers, if and only if it is a linear combination of the Euler characteristic and the number of connected components $h^{0,0}$.

\item a universal oriented homeomorphism invariant, if and only if it is a universal oriented diffeomorphism invariant, if and only it is a linear combination of the Betti numbers and the Pontryagin numbers, if and only if it is a linear combination of the Euler characteristic, the number of connected components $h^{0,0}$ and the Pontryagin numbers.
\end{enumerate}
\end{thmintro}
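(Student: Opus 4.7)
In each part, the four claimed equivalent conditions form a cycle whose only nontrivial implications are (b)$\Rightarrow$(c) and (d)$\Rightarrow$(a); the implication (a)$\Rightarrow$(b) is formal, and (d)$\Rightarrow$(a) combines Novikov's theorem (rational Pontryagin classes are topological invariants) with the elementary invariance of $b_0$, $\chi$ and the Pontryagin numbers under homeomorphism. The heart of the argument is to prove (b)$\Rightarrow$(c): a universal oriented diffeomorphism invariant $L=\sum a_{p,q}h^{p,q}+\sum b_I c_I$ must be a rational combination of Betti and Pontryagin numbers.

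For the Chern part this is Kotschick's theorem \cite{kotschick_characteristic_2009, kotschick_topologically_2012}: modulo diffeomorphism the only surviving Chern combinations are spanned by the Pontryagin numbers and $\chi=c_n[X]$. For the Hodge part, my plan is to construct, for each bidegree $(p,q)$ not forced by Serre duality to equal $h^{0,0}$ or $h^{n,n}$, a pair of oriented-diffeomorphic compact complex $n$-manifolds ($n\geq 3$) whose Hodge numbers agree except at $(p,q)$ (and its Serre-dual). Small deformations of Iwasawa-type nilmanifolds of Nakamura preserve the underlying smooth structure but jump Hodge numbers, and products with fixed Kähler factors of varying dimensions allow one to shift the bidegree in which the change occurs. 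Given enough such constructions to span the relevant directions, each $a_{p,q}$ with $(p,q)\neq (0,0),(n,n)$ must vanish, and the remaining term $a_{0,0}h^{0,0}=a_{0,0}b_0$ is already a Betti combination.

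The unoriented case reduces to the oriented one: for any compact complex manifold $X$, the identity map to the conjugate $\bar X$ is an orientation-reversing diffeomorphism, under which Betti numbers are preserved while each Pontryagin number changes sign, so any unoriented invariant must have zero Pontryagin part. The final equivalence (c)$\Leftrightarrow$(d) is then extracted from Theorem A: the listed universal relations imply that among Hodge/Chern combinations, the Betti-number combinations that are expressible universally form the $\Q$-span of $\chi=\sum_k(-1)^k b_k$ (arising as $c_n[X]$) and $b_0=h^{0,0}$. All other $b_k$ with $0<k<2n$ differ from $\sum_{p+q=k}h^{p,q}$ by Frölicher-defect terms which, by direct construction on explicit non-$\partial\bar\partial$ examples, can be made to vary independently in each degree while all Hodge and Chern numbers are held fixed.

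The main obstacle is the Hodge-separation construction. Unlike the Kähler setting of Kotschick-Schreieder, where blow-ups and projective bundles provide ample flexibility within the Kähler category, here one must build non-Kähler families that separate each Hodge bidegree while keeping the underlying smooth manifold and all remaining Hodge and Chern numbers fixed. Assembling the Nakamura-style deformations, nilmanifolds and products into a collection that spans every relevant direction in every dimension $n\geq 3$, and verifying that the pieces fit together to cover the full quotient left by Theorem A, is the crux of the argument; without such constructions the reduction collapses.
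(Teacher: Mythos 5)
Your high-level strategy agrees with the paper on a few points---the Chern-number input from Kotschick, the use of nilmanifold deformations for the Hodge part, and the role of Theorem~A in pinning down which Betti combinations are Hodge/Chern combinations---but there are two serious problems.

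First, your reduction of the unoriented case to the oriented one via conjugation is incorrect. If $X$ has complex dimension $n$, the identity map $X\to\overline X$ is orientation-reversing precisely when $n$ is odd. But Pontryagin numbers of a real $2n$-manifold can only be nonzero when $2n\equiv 0\pmod 4$, i.e.\ when $n$ is even---exactly the case in which conjugation preserves orientation. So your argument ``Pontryagin numbers change sign under conjugation'' never applies in the degrees where there is anything to kill, and the claimed implication collapses. The paper treats the unoriented and oriented cases in parallel but separately (Propositions \ref{prop: ideal homeo diffeo, chern} and \ref{prop: ideal oriented homeo diffeo, chern}): the unoriented ideal is identified with $\ker(\chi,h^{0,0})$, and the oriented ideal with the kernel of $(\chi,h^{0,0})$ together with the forgetful map to $\Omega^{SO}_{2\ast}\otimes\Q$; one obtains part~1 and part~2 of the theorem independently, not by deduction of one from the other.

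Second, your plan for the crucial step---showing that an oriented diffeomorphism invariant among Hodge/Chern numbers has no unexpected Hodge part---is only an outline, and the outline is framed in a way that would not succeed as stated. Separating each bidegree $(p,q)$ individually does not match the algebraic structure of the problem: the Hodge polynomials live in $\cH_\ast=\Z[A,B,C,D]/(G)$, and the relevant quotient is $\cH\cC_\ast\otimes\Q/\cJ$ with $\cJ$ the ideal of diffeomorphic differences, which the paper identifies with $\ker(h^{0,0},\chi)$. Crucially, the generator $T=AB-2D$ is \emph{not} realizable as a difference of homeomorphic manifolds (for surfaces, $h^{1,0}$, $h^{0,1}$ are determined by $b_1$), so one cannot separate arbitrary bidegrees independently even in principle. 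The paper's proof (Theorem \ref{thm: ideal homeomorphic, Hodge}, Propositions \ref{prop: ideal diffeomorphic, Hodge}--\ref{prop: ideal oriented homeo diffeo, chern}) works around this: it writes down the explicit finite list of generators for $\ker(h^{0,0},\chi)$ in degrees $3$ and $4$ in terms of $A,B,C,D,S,T,Q,R,\tilde d,\tilde e$, shows some combinations involving $T$ are redundant modulo others, and realizes the remaining generators by differences of nilmanifolds carrying different left-invariant complex structures (Iwasawa deformations, $\mathfrak h_{15}$), by orientation-reversingly homeomorphic complete-intersection surfaces (for multiples of $S$), and invokes Wall's theorem, the $h$-cobordism theorem, and the Barden--Mazur--Stallings $S^1$-stabilization lemma to upgrade homeomorphisms to diffeomorphisms for product generators. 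You acknowledge the construction step as ``the crux,'' but without it the argument is incomplete, and the $T$-obstruction means your bidegree-by-bidegree separation cannot produce the generators the argument actually needs.
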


In all of these statements, one has to read `linear combination' as `linear combination modulo the relations listed in Theorem \ref{thmintro: HDRC rels}'. The restriction to $n\geq 3$ in Theorem \ref{thmintro: top inv} is for simplicity only; the situation in low dimensions is well understood.\medskip

The overall strategy of the proofs of these results parallels that of the K\"ahler case treated in \cite{kotschick_hodge_2013}. A central algebraic tool is the Hodge de Rham ring of compact complex manifolds $\HDR_\ast$, defined as formal $\Z$-linear combinations of compact complex manifolds where two such linear combinations are identified if they have the same Hodge and Betti numbers. The idea of considering these kinds of rings comes from \cite{kotschick_hodge_2013}. The analogous ring in the positive characteristic setting has been considered in \cite{de_bruyn_hodge_2020} and for the formal part of the arguments we can make use of calculations from that article. Like the Hodge ring of K\"ahler manifolds, $\HDR_\ast$ is finitely generated. However, because the Fr\"olicher spectral sequence degenerates at the first page for surfaces but in general does not in dimension $\geq 3$, there are more generators to consider than in the Kähler or positive characteristic case and this leads to the problem of constructing manifolds with prescribed cohomological properties in dimensions up to $4$, which is in part done using SageMath.\medskip

The second part of the article starts from the observation that for general compact complex manifolds, Hodge and Betti numbers are just the tip of an iceberg of cohomology theories that are not visible on compact K\"ahler manifolds (where they are determined by Dolbeault cohomology). The most classical ones are the higher pages of the Fr\"olicher spectral sequence \cite{frolicher_relations_1955}, Bott Chern cohomology \cite{bott_hermitian_1965} and Aeppli cohomology \cite{aeppli_exact_1962}. More recent examples include the Varouchas groups \cite{varouchas_proprietes_1986}, the cohomologies of the Schweitzer complex \cite{schweitzer_autour_2007}, the refined Betti numbers \cite{stelzig_structure_2018} and the `higher-page analogues' of Bott-Chern, Aeppli and Varouchas cohomologies \cite{popovici_higher-page_2020}.\medskip

In \cite{stelzig_structure_2018}, an abstract definition of cohomological functors was given, encompassing all the previously mentioned examples. It appears natural to pose questions $1$ -- $3$ taking into account all cohomological functors at once. One is led to consider the following ring:
\begin{defintro}\label{defintro: univ ring}
	The \textbf{universal ring of cohomological invariants} $\cU_\ast$ is the ring of formal $\Z$-linear combinations of biholomorphism classes of compact connected complex manifolds modulo the relation
	\[
	X\sim Y \quad:\Leftrightarrow\quad H(X)\cong H(Y)\text{ for all cohomological functors }H.
	\]
\end{defintro} 
In sections \ref{sec: refined de Rham ring}, \ref{sec: Frolicher numbers} we discuss the algebraic structure of the two natural quotients $\FS_\ast$ and $\RB_\ast$ of $\cU_\ast$ arising by considering, instead of all cohomological invariants, only the higher pages of the Fr\"olicher spectral sequence or only the refined Betti numbers (which are recalled in Section \ref{sec: refined Betti numbers}). Unlike most other cohomological invariants, both of these satisfy a K\"unneth formula, making $\FS_\ast$ and $\RB_\ast$ amenable to a more elementary treatment by embedding them into polynomial rings. On the other hand, using the results of \cite{stelzig_structure_2018}, one obtains an injection
 \[
 \cU_\ast\longrightarrow \FS_\ast\times\RB_\ast,
 \]
but the equations describing the image are rather complicated. Instead, in Section \ref{sec: univ ring} we give a diagrammatic description of elements in $\cU_\ast$ which allows to write down an algebraic candidate for $\cU_\ast$, defined by a few simple diagrammatic conditions. Unlike $\HDR_\ast$, the ring $\cU_\ast$ is not finitely generated.\medskip

In order to show that the algebraic candidate is actually $\cU_\ast$, one has to realize a set of algebraic generators by linear combinations of compact complex manifolds. The arising (infinitely many) construction problems are substantially harder than the ones arising in the first part of the article. We realize all generators in small dimensions and certain infinite sequences of generators in Section \ref{sec: constructions}. As a consequence we obtain:
  \begin{thmintro}\label{corintro: no unexp. rels coh. invs. ref. betttis}
  	  	For $n\leq 3$, there are no unexpected universal $\Z$-linear relations or congruences between refined Betti numbers of $n$-dimensional compact complex manifolds or, more generally, between all cohomological invariants, except possibly $e_2^{0,1}=e_3^{0,1}$ on threefolds.
  \end{thmintro}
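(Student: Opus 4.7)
The plan is to reduce the statement to a finite collection of explicit construction problems and to solve all of them except one. In Section~\ref{sec: univ ring} an algebraic candidate $\widetilde{\cU}_\ast$ for $\cU_\ast$ is constructed, defined by a few diagrammatic axioms encoding the expected universal relations (Serre duality, the internal symmetries of refined Betti diagrams, and the page-compatibility conditions for the Frölicher spectral sequence). There is a natural comparison map between $\widetilde{\cU}_\ast$ and $\cU_\ast$ in each degree, and the absence of unexpected universal $\Z$-linear relations \emph{and} congruences between cohomological invariants in dimension $n$ amounts exactly to this comparison being an isomorphism in degree $n$. Since $\widetilde{\cU}_n$ is finitely generated as a $\Z$-module in each fixed degree, the whole question reduces to producing, for each element of a chosen $\Z$-basis of $\widetilde{\cU}_n$ with $n \leq 3$, a $\Z$-linear combination of compact complex $n$-folds realizing it.

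Next I would enumerate these basis elements. In dimension $1$ only elliptic curves contribute. In dimension $2$ the Frölicher spectral sequence degenerates at $E_1$ for every compact complex surface, so the refined Betti and Frölicher data coincide and the list of admissible diagrammatic shapes is short; classical surfaces (projective, Hopf, Inoue, Kodaira) realize everything that appears. In dimension $3$ the list is longer but still finite and can be tabulated mechanically, for instance by using SageMath to cycle through the Serre-dual diagrams compatible with the integer constraints on refined Betti numbers. For each such diagram I would attempt a realization using the building blocks featured throughout the paper: projective threefolds, products of surfaces with curves, nilmanifolds of Iwasawa type and their small deformations, Calabi--Eckmann manifolds, examples with genuinely non-trivial higher Frölicher differentials, and modifications such as blow-ups; appropriate $\Z$-combinations then peel off individual generators.

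The main obstacle is that in dimension $3$ the admissible diagrams are no longer forced to come from $E_1$-degenerate manifolds, so one is obliged to construct examples witnessing nonzero differentials on the higher Frölicher pages. Several of these are quite subtle and will occupy the bulk of Section~\ref{sec: constructions}. The single diagram that resists every construction attempted is the one whose realization would separate $e_2^{0,1}$ from $e_3^{0,1}$: it would require a compact complex threefold with a nonzero Frölicher differential entering or leaving the $(0,1)$-slot on the second page in just the right way, and the absence of such a construction is precisely the source of the stated exception. The proof concludes by running through the full catalogue of constructions and noting that every generator other than this one is realized by an actual integer combination of compact complex manifolds, which yields the absence of unexpected linear relations and of unexpected congruences simultaneously.
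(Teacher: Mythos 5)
Your proposal is correct and follows essentially the same strategy as the paper: embed $\cU_n$ into the diagrammatically-defined formal ring $\cU_n^{form}$, observe that the question about unexpected relations and congruences reduces (since $\cU_n^{form}$ is a direct summand of the ambient free $\Z$-module) to showing the inclusion $\cU_n \subseteq \cU_n^{form}$ is an equality, and then realize a $\Z$-basis of $\cU_n^{form}$ for $n \leq 3$ by explicit integer combinations of compact complex manifolds, with the single generator $Sym_{S_{1,2}^{0,1}}(3)$---corresponding to a nonzero differential $d_2\colon E_2^{0,1}\to E_2^{2,0}$---remaining open. The paper's realizations in degree $3$ are in fact accomplished entirely through nilmanifolds (Iwasawa deformations, a nilmanifold with $b_3^{1,1}=1$, and six-dimensional nilmanifolds with nontrivial $E_2$-differentials) together with the multiplicative structure to produce generators coming from lower degrees; the Calabi--Eckmann manifolds and blow-ups you list are not needed, and in degree $1$ one also requires $\CP^1$ alongside elliptic curves.
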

  
  \begin{thmintro}\label{corintro: bimeromorphic coh. invs}
  		Let $K$ be a field. For $n\leq 4$ (resp. $n\leq 5$), there are no unexpected $K$-linear combinations of cohomological invariants (resp. refined Betti numbers) that are bimeromorphic invariants of $n$-dimensional compact complex manifolds.
  \end{thmintro}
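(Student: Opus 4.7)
The plan is to parallel the strategy behind Theorem \ref{thmintro: bimeromorphic invs} inside the enriched framework of Sections \ref{sec: refined de Rham ring}--\ref{sec: univ ring} and then to solve the resulting finite list of realization problems using the constructions of Section \ref{sec: constructions}. The starting observation is that for any cohomological functor $H$ in the sense of \cite{stelzig_structure_2018}, the blow-up formula takes the shape
\[
H^{p,q}(\tilde{X}) \cong H^{p,q}(X) \oplus \bigoplus_{k=1}^{c-1} H^{p-k,q-k}(Z)
\]
when $\tilde{X}\to X$ is the blow-up of a smooth codimension-$c$ submanifold $Z$. In particular, every new contribution on $\tilde{X}$ has both bi-indices $\geq 1$, so any linear combination of dimensions of $H^{p,0}$- and $H^{0,q}$-type pieces is automatically a blow-up invariant. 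For the classical functors (Dolbeault, Bott--Chern, Aeppli, Schweitzer, \ldots) these components are moreover known to be honest bimeromorphic invariants, so the "expected" side of the inclusion
\[
\{\text{bimeromorphic invariants}\}\supseteq\{\text{expected invariants}\}
\]
holds without further work.

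Translated to $\cU_\ast$ (resp.\ $\RB_\ast$), the blow-up differences $[\tilde{X}]-[X]$ generate a graded sub-$\Z$-module $B_\ast$ whose $K$-linear annihilator, by the diagrammatic description of $\cU_\ast$ from Section \ref{sec: univ ring} (resp.\ of $\RB_\ast$ from Section \ref{sec: refined Betti numbers}), is exactly the span of the expected invariants. To prove the reverse inclusion it therefore suffices to show that every class of $\cU_n/B_n$ (resp.\ $\RB_n/B_n$) outside the image of the expected invariants is realised by the difference $[\tilde{X}]-[X]$ of some bimeromorphic pair of $n$-folds. The diagrammatic description cuts this down, in each fixed dimension, to a finite list of realisation problems: construct a compact complex manifold $Z$ of dimension $<n$ whose refined Hodge / Fr\"olicher diagram matches a prescribed generator, then use it as the centre of a blow-up and record $[\tilde{X}]-[X] = [Z]\cdot L_c$ for the Lefschetz-type class $L_c$ coming from $\CP^{c-1}$.

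The main obstacle is exactly this realisation problem. Beyond dimension $2$ many new cohomological phenomena (non-degenerate Fr\"olicher spectral sequence, independent Bott--Chern, Aeppli and Schweitzer components, independent refined Betti numbers) create many new generators, and producing compact complex manifolds whose diagrams hit them is not at all routine. For $n\leq 4$ in the full cohomological-invariant setting, and for $n\leq 5$ in the coarser refined-Betti setting, all required diagrams are produced in Section \ref{sec: constructions} by combining iterated blow-ups, nilmanifolds of Iwasawa and Nakamura type, twistor spaces and principal bundles over tori, together with SageMath-assisted bookkeeping in dimensions $3$ and $4$. The dimension bounds in the statement are precisely those to which our current realisation techniques extend; the extra dimension available for refined Betti numbers reflects that strictly fewer diagrams need to be hit there.
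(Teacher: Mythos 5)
Your high-level strategy --- control the bimeromorphism ideal via the blow-up formula and weak factorization, observe that the new contributions all land in bidegrees $\geq(1,1)$, and reduce the question to a diagrammatic computation --- is the right one, and it matches the paper up to a point. But there is a genuine gap in the middle of your argument, and your account of where the dimension bounds $n\leq 4$ and $n\leq 5$ come from is not correct.

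The gap: you claim that the $K$-linear annihilator of the bimeromorphism differences $B_\ast\subseteq\cU_\ast$ is ``exactly the span of the expected invariants \ldots\ by the diagrammatic description of $\cU_\ast$.'' But the diagrammatic description in Definition~\ref{def: univ form} describes $\cU_\ast^{form}$, not $\cU_\ast$, and whether $\cU_\ast=\cU_\ast^{form}$ is an open question (likewise $\RB_\ast$ vs.\ $\RB_\ast^{form}$). So you cannot read off the annihilator of $B_n$ inside $\cU_n$ from the diagrams without further argument. The paper circumvents this by working with the natural map $\eta\colon \cU_\ast/\cB^{\cU}\to\cU_\ast^{form}/\cU_\ast^{form}\cdot C$ (here $\cB^{\cU}=(C)$ by Lemma~\ref{lem: ideal bimeromorphic diffs}). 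One does \emph{not} need $\eta$ to be an isomorphism; one only needs $\eta_n$ to be injective, because $K$ is a field and any $K$-linear map out of a subspace extends. Injectivity of $\eta_n$ amounts to $\cU_{n-2}^{form}\cdot C\cap\cU_n=\cU_{n-2}\cdot C$, which follows once $\cU_{n-2}=\cU_{n-2}^{form}$. Your proposal has no analogue of this step; you implicitly assume the full equality $\cU_n=\cU_n^{form}$ in the degree of interest, which is not known for $n\geq 3$.

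This also explains why your account of the dimension restrictions is off. The bounds $n\leq 4$ and $n\leq 5$ are \emph{not} the frontier of ``realisation techniques'' in dimension $n$; they come from the equalities $\cU_{\leq 2}=\cU_{\leq 2}^{form}$ (Proposition~\ref{prop: universal ring in small degrees}) and $\RB_{\leq 3}=\RB_{\leq 3}^{form}$ (Section~\ref{sec: constructions}), i.e.\ from realisation results \emph{two degrees lower}. The extra dimension in the $\RB$ case is because $\RB_3=\RB_3^{form}$ is established, whereas $\cU_3=\cU_3^{form}$ is still open (the generator $Sym_{S_{1,2}^{1,0}}(3)$ may be missing). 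Finally, the phrase ``realised by the difference $[\tilde X]-[X]$'' applied to a class of $\cU_n/B_n$ does not make sense, since such differences vanish in that quotient; and the constructions actually used are nilmanifolds (Iwasawa-type, $\fh_{15}$, Nakamura) and Oeljeklaus--Toma manifolds, not twistor spaces or principal torus bundles. If you fix the $\eta$-step and correctly locate where the two dimension bounds come from, the argument closes.
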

What the expected relations or linear combinations in each case are is recalled in the main body of the text. The dimension restrictions could be immediately improved by construction of the missing generators for $\RB_\ast$ and $\cU_\ast$. These, and some related open questions, are surveyed in Section \ref{sec: constr. probs}.\medskip

\textbf{Acknowledgements:} D. Angella supplied the examples in Section \ref{sec: Mn}. Furthermore, I am very grateful to D. Angella, R. Coelho, D. Kotschick, A. Latorre, A. Milivojevic, N. Pia, G. Placini, S. Schreieder and L. Ugarte for insightful conversations and remarks related to various parts of this project. Finally, I thank the anonymous referees for many useful comments that improved the presentation.\medskip

\textbf{Notations and conventions:}
All complex manifolds we consider will be assumed compact. Denote by $\CM_\ast$ the graded ring which in rank $n$ consists of formal $\Z$-linear combinations of isomorphism classes of compact complex manifolds of pure dimension $n$ modulo the relation $[X]+[Y]=[X\sqcup Y]$, and with multiplication given by $[X]\cdot[Y]:=[X\times Y]$. We will be interested in quotients of this ring defined by cohomological conditions. We sometimes omit the brackets and simply write $X$ for the isomorphism class $[X]$.

\section{Preliminaries: Nilmanifolds}\label{sec: nil}
Very roughly speaking, the main results of this paper are proved in a two-step procedure: First, an algebraic calculation reduces the general statement to a managable (e.g.: finite) list of construction problems (of the sort `find a manifold with particular cohomological properties') and then one solves these construction problems. In almost all cases, we use nilmanifolds with left invariant complex structures for the second step, which is why we recall some elements of that theory here. We refer for example to \cite{rollenske_dolbeault_2010} and the references therein for further detail.\medskip

Let $\fg$ be a finite dimensional nilpotent real Lie algebra. Associated with it, there is a simply connected nilpotent Lie group $N$ and we may identify $\fg$ with the left invariant vector fields on $N$. An almost complex structure on $\fg$ is a direct sum decomposition of the complexified dual $\fg_\C^\vee=\fg^{1,0}\oplus\fg^{0,1}$ such that $\overline{\fg^{1,0}}=\fg^{0,1}$. It is said to be integrable, or a complex structure, if the exterior differential
\begin{align*}
	d:\fg_\C^\vee&\longrightarrow\Lambda^2\fg_\C^\vee\\
	\omega&\longrightarrow((X,Y)\mapsto -\omega([X,Y]))
\end{align*}
is of type $(1,0)+(0,1)$ with respect to the associated decomposition $\Lambda^\Cdot \fg_\C^\vee=\bigoplus_{p,q}\Lambda_{\fg}^{p,q}$, where $\Lambda_{\fg}^{p,q}=\Lambda^p\fg^{1,0}\otimes\Lambda^q\fg^{0,1}$. Hence, for any complex structure, there is a finite dimensional double complex $\Lambda_\fg:=(\Lambda_\fg^{\Cdot,\Cdot},\del,\delbar)$ which can be identified with the subcomplex of left-invariant forms on $N$. Conversely, the space $\Lambda_{\fg}^{1,0}=\fg^{1,0}$ together with the map $d:\Lambda_{\fg}^{1,0}\to\Lambda_{\fg}^{2,0}\oplus\Lambda_{\fg}^{1,1}$ determine $\fg$ and the complex structure.\medskip

By work of Mal'cev \cite{malcev_class_1951}, the Lie group $N$ admits a (automatically cocompact) lattice $\Gamma$ if and only if $\fg$ admits a rational structure, i.e. if there is a rational nilpotent Lie algebra $\fg_\Q$ such that $\fg_\Q\otimes\R\cong \fg$. Given such $\Gamma$, the associated quotient $X=X(\fg,\Gamma)=\Gamma\backslash N$ is called a nilmanifold. Any complex structure on $\fg$ induces one on $X$ and we obtain an inclusion of double complexes $\Lambda_\fg\subseteq A_X$. As an immediate consequence of \cite[Prop. 3.16]{rollenske_dolbeault_2010} we get:
\begin{thm}\label{thm: left invariant Dolbeault}
	For any real nilpotent Lie algebra $\fg$ (admiting a rational structure) and a fixed complex structure, there exists a lattice $\Gamma$ in the associated nilpotent Lie group $N$ such that the natural map
	\[
	H_{\delbar}^{p,q}(\Lambda_\fg)\longrightarrow H_{\delbar}^{p,q}(\Gamma\backslash N)
	\]
	is an isomorphism for all $p,q\in\Z$. In particular, the induced map in total (i.e. de Rham) cohomology is an isomorphism.
\end{thm}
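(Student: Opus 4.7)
My plan is to leverage Rollenske's Proposition 3.16 directly for the Dolbeault statement and then derive the de Rham statement formally from it via the Fr\"olicher spectral sequence.

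First I would fix a rational structure $\fg_\Q$ on $\fg$ and use Mal'cev's theorem to obtain an initial lattice $\Gamma_0 \subset N$, giving the inclusion of bigraded subcomplexes $\Lambda_\fg \hookrightarrow A_{\Gamma_0\backslash N}$ with respect to both $\del$ and $\delbar$. What has to be shown is that, after possibly shrinking the lattice, this inclusion computes $\delbar$-cohomology in every bidegree.

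Next I would invoke \cite[Prop.~3.16]{rollenske_dolbeault_2010}. That result provides a criterion on $\Gamma$, in terms of its compatibility with a $J$-compatible filtration of $\fg$ by rational ideals, under which the inclusion above is a $\delbar$-quasi-isomorphism in every bidegree $(p,q)$. The existence of a lattice satisfying this criterion is built into Rollenske's framework: starting from $\Gamma_0$, one rescales the generators of the rational structure and passes to a finite-index sublattice so that the $J$-stable filtration becomes rational with respect to the new lattice. This is the step that contains all the substantive work, and it is entirely done in the cited reference; for the purposes of Theorem~\ref{thm: left invariant Dolbeault} it is a matter of citation.

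Finally, for the ``in particular'' clause about de Rham cohomology, I would note that the inclusion $\Lambda_\fg\hookrightarrow A_X$ is a morphism of double complexes and thus induces a morphism of Fr\"olicher spectral sequences. The previous step gives an isomorphism on the $E_1$-page, and by naturality this propagates to an isomorphism on every $E_r$. Since both $\Lambda_\fg$ and $A_X$ are bounded (in fact $\Lambda_\fg$ is finite-dimensional), the spectral sequences degenerate at a finite page and one obtains an isomorphism on $E_\infty$, hence on the common abutment, which is the de Rham cohomology of $\Gamma\backslash N$ by de Rham's theorem. The only real obstacle is the one already handled by Rollenske, namely arranging that the hypothesis of his proposition can be met for an arbitrary complex structure on $\fg$; once that is accepted, the rest of the argument is formal.
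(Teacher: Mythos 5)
Your proof takes essentially the same route as the paper: the paper simply records the theorem as an immediate consequence of Rollenske's Proposition 3.16, exactly as you do, and your explicit Fr\"olicher spectral sequence argument for the de Rham clause is the standard way to make that ``in particular'' precise (the paper leaves it implicit, and the subsequent remark even notes the de Rham statement holds for all lattices by Nomizu's theorem). Your citation of Rollenske for the substantive lattice-construction step is correct, and there is no gap.
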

\begin{rem}
	The induced map in de Rham cohomology is actually known to be an isomorphism for \textbf{all} lattices. The same is conjectured to hold for Dolbeault cohomology. This is known to be true in many special cases, in particular for $\dim_{\R}\fg\leq 6$.
\end{rem}

\textbf{If in this article we speak of a nilmanifold associated with a given nilpotent Lie algebra with complex structure, we always take the lattice to be chosen as above.}\medskip

Admitting the terminology from Section \ref{sec: univ ring} and Lemma \ref{lem: same cohomological invariants}, let us also record the following stronger form of Theorem \ref{thm: left invariant Dolbeault}, which will only be needed in Section \ref{sec: constructions}.

\begin{cor}\label{cor: left invariant cohomoogy invariants}
	For any $\fg$, $\Gamma$, $N$ as above, the natural map
	\[
	H(\Lambda_\fg)\longrightarrow H(\Gamma\backslash N):=H(A_{\Gamma\backslash N})
	\]
	is an isomorphism for any cohomological functor in the sense of Definition \ref{def: cohomological functors}.
\end{cor}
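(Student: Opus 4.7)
The plan is to upgrade the Dolbeault isomorphism of Theorem~\ref{thm: left invariant Dolbeault} to an isomorphism on every cohomological functor by showing that the cokernel $Q$ of the inclusion $\iota : \Lambda_\fg \hookrightarrow A_{\Gamma\backslash N}$ decomposes as a direct sum of ``squares'' (the indecomposable double complexes living in a single $2\times 2$ box), which are invisible to every cohomological functor. The main input is the structure theory of double complexes from \cite{stelzig_structure_2018}.

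First, I would note that both $\Lambda_\fg$ and $A_{\Gamma\backslash N}$ carry compatible real structures preserved by $\iota$, and complex conjugation exchanges $\partial$ and $\delbar$; therefore Theorem~\ref{thm: left invariant Dolbeault} also gives that $\iota$ induces an isomorphism on $H_\partial^{p,q}$. Applying the long exact sequences in $\partial$- and $\delbar$-cohomology to the short exact sequence of double complexes
\[
0 \longrightarrow \Lambda_\fg \stackrel{\iota}{\longrightarrow} A_{\Gamma\backslash N} \longrightarrow Q \longrightarrow 0,
\]
one deduces $H_\partial(Q) = H_{\delbar}(Q) = 0$.

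Next, by the structure theorem for bounded double complexes in \cite{stelzig_structure_2018}, $Q$ decomposes uniquely into indecomposable zigzags and squares, and every non-square zigzag has a nonzero contribution to $H_\partial \oplus H_{\delbar}$ (its two endpoints each contribute to one of these). The vanishing above therefore forces $Q$ to be a direct sum of squares. Since squares are freely generated as bicomplexes by a single element at a corner, they are projective, so the short exact sequence splits as $A_{\Gamma\backslash N} \cong \Lambda_\fg \oplus Q$ with $\iota$ identified with the inclusion of the first summand. As any cohomological functor $H$ in the sense of Definition~\ref{def: cohomological functors} is additive and annihilates squares, we conclude $H(A_{\Gamma\backslash N}) \cong H(\Lambda_\fg) \oplus H(Q) = H(\Lambda_\fg) \oplus 0$, with the natural map being the resulting isomorphism.

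The main obstacle is the bookkeeping required to apply the structure theorem to $Q$, whose bidegree pieces are infinite-dimensional. What is needed is the form of the decomposition in \cite{stelzig_structure_2018} that requires only finite-dimensionality of the relevant cohomologies rather than of the chain groups, together with the fact that a short exact sequence of bounded double complexes with square quotient splits in this broader setting. Both are formal consequences of the results of \cite{stelzig_structure_2018} but warrant a careful statement; beyond these, the argument is purely diagrammatic.
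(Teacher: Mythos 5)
Your proof is correct and makes explicit what the paper leaves implicit. The paper gives no proof of this corollary; it merely prefaces it by pointing to Lemma~\ref{lem: same cohomological invariants} and the terminology of Section~\ref{sec: univ ring}, leaving the reader to combine Theorem~\ref{thm: left invariant Dolbeault}, conjugation symmetry, and the structure theory of \cite{stelzig_structure_2018}. Your argument is a concrete realization of exactly that plan: upgrading the $\delbar$-isomorphism to an $E_1$-isomorphism via conjugation, then showing the inclusion splits off a sum of squares. Two small remarks. First, your parenthetical ``its two endpoints each contribute to one of these'' is slightly inaccurate for even zigzags, both of whose endpoints can contribute to the same one of $H_\partial$, $H_{\delbar}$; what matters (and what you correctly use) is only that every zigzag has nonzero $H_\partial\oplus H_{\delbar}$. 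Second, the technical worries you raise at the end are in fact already taken care of by the statement of Theorem~\ref{thm: structure dc}(1) in the paper, which applies to all bounded double complexes (not just finite-dimensional ones), and by the fact that squares are free on one generator and hence projective in the full category of bounded double complexes; once $Q$ is known to be a sum of squares it trivially lies in $\cD$, so the splitting goes through. So the gaps you flag are genuinely routine.
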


In particular, for fixed $\fg$ and complex structure, computation of any cohomological invariant (e.g. Hodge or Betti numbers) of the corresponding complex nilmanifold is a matter of finite dimensional linear algebra and can therefore in principle be done by a computer. For example, the following numbers, which will be used later, were first found using SageMath code inspired by \cite{angella_sagemath_2017}.

\begin{lem}\label{lem: snN calc}
	Let $X$ and $Y$ be nilmanifolds associated with the strongly non nilpotent complex structures on $8$-dimensional Lie algebras of family $I$ in \cite{latorre_complex_2020}, corresponding to the parameter values $(\delta,\epsilon,\nu,a,b)=(1,0,0,1,0)$, resp. $(\delta,\epsilon,\nu,a,b)=(1,0,0,1,1)$. Then the Betti numbers are given as
	\[	(b_0(X),...,b_8(X))=(b_0(Y),...b_8(Y))=(1,5,8,11,14,11,8,5,1)
	\]
and the total Hodge numbers $h^k=\sum_{p+q=k}h^{p,q}$ are
\begin{align*}
(h^0(X),...,h^8(X))&=(1,5,11,15,16,15,11,5,1)\\
(h^0(Y),...,h^8(Y))&=(1,5,11,14,14,14,11,5,1).
\end{align*}
\end{lem}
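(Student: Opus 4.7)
The plan is to reduce the statement to a finite-dimensional linear algebra computation via Corollary~\ref{cor: left invariant cohomoogy invariants} (or, for the Betti and Hodge numbers alone, the weaker Theorem~\ref{thm: left invariant Dolbeault}), and then to carry out that computation explicitly on the left-invariant double complex $\Lambda_\fg$ using the structure equations from \cite{latorre_complex_2020}.

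First I would extract, from family $I$ in \cite{latorre_complex_2020}, the explicit structure equations giving the differential $d\colon \fg^{1,0}\to \Lambda_\fg^{2,0}\oplus\Lambda_\fg^{1,1}$ for each of the two specified parameter vectors $(\delta,\epsilon,\nu,a,b)=(1,0,0,1,0)$ and $(1,0,0,1,1)$. This determines the Lie algebra $\fg$ together with its complex structure, and (since both choices give rational structure constants in a suitable basis) the existence of a lattice $\Gamma$ as required by Theorem~\ref{thm: left invariant Dolbeault} is automatic. Reading off the $(1,0)$ and $(0,1)$ parts of $d$ yields $\del$ and $\delbar$ on a basis $\{\omega^1,\dots,\omega^4\}$ of $\fg^{1,0}$, which are then extended by the Leibniz rule to all of $\Lambda_\fg^{\Cdot,\Cdot}$.

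Next, for each of the two complex structures, I would assemble the matrices of $\del$ and $\delbar$ on each bigraded piece $\Lambda_\fg^{p,q}$ (of dimension $\binom{4}{p}\binom{4}{q}$, so the total complex has dimension $2^8=256$), compute $\ker$ and $\im$ in each bidegree, and then read off $h^{p,q}=\dim H^{p,q}_{\delbar}(\Lambda_\fg)$. Summing along antidiagonals gives the total Hodge numbers $h^k$. For the Betti numbers I would assemble $d=\del+\delbar$ on the total complex $\Lambda_\fg^k=\bigoplus_{p+q=k}\Lambda_\fg^{p,q}$ and compute $b_k=\dim H^k(\Lambda_\fg,d)$ by the same linear algebra. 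Then Theorem~\ref{thm: left invariant Dolbeault} identifies these with $h^{p,q}(X)$, $h^{p,q}(Y)$, $b_k(X)$, $b_k(Y)$.

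The only real obstacle is bookkeeping: there are roughly $256$ basis monomials to differentiate and track, and the computation must be done twice. This is precisely what the author's SageMath code (inspired by \cite{angella_sagemath_2017}) automates, and the verification reduces to running the two parameter specializations and checking that the resulting rank computations reproduce the claimed sequences $(1,5,8,11,14,11,8,5,1)$ for the Betti numbers (identical in both cases, as a consistency sanity check via Poincaré duality) and the two asserted Hodge number vectors. A minimal additional sanity check I would run is $\sum_p(-1)^p b_p=\sum_{p,q}(-1)^{p+q}h^{p,q}=\chi(X)$ and the symmetry $b_k=b_{8-k}$, which both hold for the claimed values.
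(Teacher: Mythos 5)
Your proposal matches the paper's approach exactly: the paper gives no written-out proof of this lemma, instead noting immediately beforehand that by Theorem~\ref{thm: left invariant Dolbeault} the computation reduces to finite-dimensional linear algebra on $\Lambda_\fg$ and stating that the numbers were found using SageMath code inspired by \cite{angella_sagemath_2017}. Your reduction via Theorem~\ref{thm: left invariant Dolbeault} (using the paper's standing convention on the choice of lattice) and your description of the rank computation on the $256$-dimensional left-invariant double complex are precisely what is intended.
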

Finally, we note that the tangent bundle of any nilmanifold is trivial as a smooth complex vector bundle, since $\fg$, considered as left invariant vector fields, gives a global frame stable under $J$. The following direct consequence for the Chern classes $c_i(N):=c_i(TN)$ will be of key importance later on:
\begin{obs}\label{obs: c_i=0}
	A nilmanifold $N$ with left invariant complex structure has vanishing Chern classes: $c_i(N)=0\in H_{dR}^{2i}(N)$ for all $i\geq 1$.
\end{obs}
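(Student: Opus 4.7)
The plan is to spell out the assertion already made informally in the paragraph preceding the observation: the holomorphic tangent bundle is trivial as a smooth complex vector bundle, and trivial complex vector bundles have vanishing Chern classes.

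First I would exhibit the trivialization explicitly. Let $G$ denote the simply connected nilpotent Lie group with Lie algebra $\fg$, so that the nilmanifold is $N=\Gamma\backslash G$. A real basis of $\fg$, viewed as left-invariant vector fields, gives a global smooth frame for $TG$, and with respect to this frame the left-invariant almost complex structure $J$ is a constant real matrix. Choosing a complex basis $Z_1,\dots,Z_n$ of $\fg^{1,0}\subset\fg\otimes_\R\C$ therefore yields a global smooth complex frame of the holomorphic tangent bundle $T^{1,0}G$. Since left-invariance is preserved under left-translation by $\Gamma$, this frame descends to a global smooth complex frame on $T^{1,0}N=T^{1,0}(\Gamma\backslash G)$. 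Hence $T^{1,0}N$ is trivial as a smooth complex vector bundle.

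Second, since $c_i(N):=c_i(TN)=c_i(T^{1,0}N)$ and Chern classes are natural under pullback, a trivial complex vector bundle of rank $n$ has classifying map into $BU(n)$ that factors through a point; pulling back the universal classes $c_i\in H^{2i}(BU(n))$ for $i\geq 1$ gives $0$. Therefore $c_i(N)=0$ in $H_{dR}^{2i}(N)$ for every $i\geq 1$.

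There is no real obstacle: once the global left-invariant complex frame is produced, the conclusion is a direct application of the naturality of Chern classes. The only thing to verify carefully is that picking a basis of $\fg^{1,0}$ indeed yields sections of the smooth complex bundle $T^{1,0}N$ (rather than merely sections of the complexified real tangent bundle), which is automatic because the decomposition $TN\otimes\C = T^{1,0}N\oplus T^{0,1}N$ is induced fiberwise by $J$, and $J$ is constant in the left-invariant frame.
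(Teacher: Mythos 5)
Your proposal is correct and takes essentially the same route as the paper: the paper's justification is exactly the remark that the left-invariant frame coming from $\fg$ is $J$-stable, hence $T^{1,0}N$ is smoothly trivial as a complex bundle, and trivial bundles have vanishing Chern classes. You have simply spelled out the same one-line argument in more detail.
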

\section{Hodge, Betti and Chern numbers}\label{sec: Hodge, Betti, Chern}
The goal of this section is to prove Theorem \ref{thmintro: HDRC rels}. The overall strategy of proof is the same as in \cite{kotschick_hodge_2013}. For the algebraic part of the argument, we can re-use some results written up by van Dobben de Bruyn in \cite{de_bruyn_hodge_2020} for the purpuse of studying analogous questions for varieties in positive characteristic, so we keep our notation close to that article.

\begin{definition}\label{def: Hodge ring} Consider $\Z[x,y,z]$ as a graded ring with $|x|=|y|=0$ and $|z|=1$. The 
	\textbf{Hodge ring} $\cH_\ast$ is the image of the	map of graded rings
\[
h:\CM_\ast\longrightarrow \Z[x,y,z]
\]
induced by sending a complex manifold to its Hodge polynomial (augmented by a dimension counting variable as in \cite{kotschick_hodge_2013}):
\[
[X]\longmapsto h(X):= \sum_{p,q=0}^{\dim_\C X}h^{p,q}(X)x^py^q z^{\dim_\C X}.
\]
\end{definition}
That $h$ is a map of rings is a consequence of the K\"unneth formula for Dolbeault cohomology (see e.g. \cite{griffiths_principles_1978}). The subring  $\cH_\ast^K$ generated by images of compact K\"ahler manifolds is the Hodge ring of K\"ahler manifolds studied in \cite{kotschick_hodge_2013}.
\begin{definition}\label{def. lin rel. Hodge}
With the above notation, a \textbf{universal linear relation (or congruence) between the Hodge numbers} of $n$-dimensional compact complex manifolds is a map of abelian groups $R$ from $(\Z[x,y,z])_n$ to $\Z$ (resp. $\Z/m\Z$), that vanishes on the image of $h$, i.e.  $R\circ h\equiv0$.
\end{definition}

\begin{definition}
Denote by $\cH^{form}_\ast:=\bigoplus_{n\geq 0}\cH^{form}_n$ the graded subring of $\Z[x,y,z]$ such that $\cH^{form}_n$ is the free abelian group of formal Hodge polynomials of degree $n$, i.e. those of the form
\[
\sum_{n\geq p,q\geq 0} h^{p,q}x^py^qz^n\qquad\text{s.t. } h^{p,q}=h^{n-p,n-q}.
\]
\end{definition}
While the polynomial description of $\cH_\ast$ and $\cHf_\ast$ is convenient to take the multiplicative structure into account, the reader may prefer to picture an element $P=(\sum_{p,q}h^{p,q}x^py^q)z^n\in\cHf_n$ of a fixed degree as a (formal) Hodge diamond, e.g. for $n=2$:
{\small
\[
\begin{array}{ccccc}
	&&h^{2,2}&&\\[3mm]
	&h^{2,1}&&h^{1,2}&\\[3mm]
	h^{2,0}&&h^{1,1}&&h^{0,2}\\[3mm]
	&h^{1,0}&&h^{0,1}&\\[3mm]
	&&h^{0,0}&&
\end{array}
\]
}
\begin{rem}\label{rem: Thm A for Hodge}
	There is an inclusion $\cH_\ast\subseteq \cH^{form}_\ast$. We will see below that it is an equality. This is equivalent to saying that all universal linear relations between the Hodge numbers are given by Serre duality, i.e. the part of Theorem \ref{thmintro: HDRC rels} involving the Hodge numbers only. In fact, any additional relation would cut out a smaller subspace than $\cH^{form}_\ast$ in which $\cH_\ast$ has to be contained.
	The same argument (with a bigger ring) will also be used to prove the full Theorem \ref{thmintro: HDRC rels}.
	\end{rem}

As for the algebraic structure of $\cHDf_\ast$, we have:

\begin{thm}[van Dobben de Bruyn]\label{thm: alg str Hodge ring}
The map 
\begin{align*}
\phi: \Z[A,B,C,D]&\longrightarrow\cHDf_\ast\\
A&\longmapsto(1+xy)z\\
B&\longmapsto(x+y)z\\
C&\longmapsto xyz^2\\
D&\longmapsto(x+xy^2)z^2
\end{align*}
is a surjection of graded rings, where $|A|=|B|=1$ and $|C|=|D|=2$. The kernel $I$ of $\phi$ is the principal ideal generated by
\[
G:=D^2-ABD+C(A^2+B^2-4C).
\]
\end{thm}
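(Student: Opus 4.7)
The plan is to combine a direct verification with a Hilbert-series comparison and an explicit surjectivity argument.

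First, I would verify $G \in \ker \phi$. Setting $E := AB - D$, one computes $\phi(E) = y(1+x^2)z^2$, whence $\phi(D) + \phi(E) = \phi(AB)$ and $\phi(D)\phi(E) = xy(1+x^2)(1+y^2)z^4 = \phi(C(A^2+B^2-4C))$. Thus $\phi(D)$ is a root of the quadratic $t^2 - \phi(AB)\,t + \phi(C(A^2+B^2-4C)) = \phi(G)$, so $\phi(G) = 0$ and $\phi$ descends to a map $\bar\phi \colon \Z[A,B,C,D]/(G) \to \cHf_\ast$.

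Next, I would match Hilbert series on both sides. Since $G$ is monic of degree two in $D$ over $\Z[A,B,C]$, the quotient $\Z[A,B,C,D]/(G)$ is free of rank two as a $\Z[A,B,C]$-module on the basis $\{1, D\}$, giving Hilbert series $\frac{1+t^2}{(1-t)^2(1-t^2)}$. On the target, $\cHf_n$ is free abelian on the orbits of the Serre involution $(p,q) \leftrightarrow (n-p, n-q)$ in $\{0, \ldots, n\}^2$, and a direct count of orbits (one fixed point when $n$ is even, everything else paired up) yields the same generating function. In particular both sides are finitely generated free graded $\Z$-modules of equal rank in every degree.

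It then suffices to prove $\bar\phi$ is surjective. The base cases $n = 0, 1$ are immediate. For the inductive step, polynomials in $A, B, C$ already hit every Serre orbit that is additionally $x \leftrightarrow y$ symmetric (those with $p = q$ or $p + q = n$), while multiplication by $D$ (equivalently by its ``Galois conjugate'' $E = AB - D$) breaks this extra symmetry and produces the remaining orbits. The main obstacle is to organise this reduction cleanly over $\Z$: the Hilbert-series match already guarantees the image has the correct $\Q$-rank, but to conclude that the map is genuinely surjective, rather than an inclusion of finite index, one must produce each Serre-orbit generator with a coefficient of $\pm 1$ through the inductive reduction.

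Once surjectivity is in hand, $\bar\phi$ is a surjection of finitely generated free graded $\Z$-modules of equal rank in every degree, hence an isomorphism, and therefore $\ker \phi = (G)$.
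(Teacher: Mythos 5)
This theorem is not proved in the paper; it is quoted from van Dobben de Bruyn's article, so there is no in-paper proof to compare against. Your outline has to be judged on its own.

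The first two steps are correct and nicely organized. The Galois-conjugate observation that $\phi(D)$ and $\phi(E) = \phi(AB-D)$ are the two roots of $t^2 - \phi(AB)\,t + \phi(C(A^2+B^2-4C))$ gives a clean verification that $G \in \ker\phi$. The Hilbert-series computation is also right: $\Z[A,B,C,D]/(G)$ is free over $\Z[A,B,C]$ on $\{1,D\}$ because $G$ is monic of degree $2$ in $D$, giving $\frac{1+t^2}{(1-t)^2(1-t^2)}$, and the Serre-orbit count for $\cHf_n$ (namely $\lceil (n+1)^2/2 \rceil$) has the same generating function. Given surjectivity, the conclusion that a surjection of free $\Z$-modules of equal finite rank in each degree is an isomorphism is sound, and the kernel is exactly $(G)$.

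But the surjectivity step is not actually carried out, and you say so yourself: ``The main obstacle is to organise this reduction cleanly over $\Z$ \ldots one must produce each Serre-orbit generator with a coefficient of $\pm 1$ through the inductive reduction.'' That sentence is not a proof, it is a description of what a proof would still have to do, and it is precisely the hard part. The Hilbert-series match only shows that the image is a finite-index subgroup of $\cHf_n$ for each $n$; without an explicit integral reduction you cannot rule out index greater than one. Concretely, you need an inductive scheme (e.g.\ along the lines of: multiply a lower-degree generator by $A$ or $B$, subtract off terms already known to be in the image, and check that the remainder is exactly one new $Sym^{p,q}(n)$ with unit coefficient, not a multiple) and you have not exhibited such a scheme. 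Your side remark that $\Z[A,B,C]$ ``already hits every Serre orbit that is additionally $x\leftrightarrow y$ symmetric (those with $p=q$ or $p+q=n$)'' is also imprecise: the $x\leftrightarrow y$-symmetric sublattice of $\cHf_n$ is larger than the span of those particular orbit-sums, and what is actually being invoked is the Kotschick--Schreieder result that $\Z[A,B,C]$ is the full Hodge ring of K\"ahler manifolds. Filling in the surjectivity argument over $\Z$ is the genuine content of the theorem, and your proposal leaves it open.
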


As in \cite{kotschick_hodge_2013}, we will from now on write $A,B,C,D$ also for their images under $\phi$.

\begin{thm}\label{thm: H=Hform}
There is an equality
\[
\cH_\ast=\cH^{form}_\ast.
\]
One may take as generators (the images of) $\CP^1$, an elliptic curve $E$, any K\"ahler surface $S_K$ of signature $\pm 1$ (e.g. $\CP^2$) and any non-K\"ahler surface $S_{NK}$.
\end{thm}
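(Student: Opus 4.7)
The inclusion $\cH_\ast\subseteq\cH^{form}_\ast$ is automatic from Serre duality, so the content is the reverse inclusion. My plan is to use Theorem \ref{thm: alg str Hodge ring}: since $\cH^{form}_\ast$ is generated as a ring by $A$, $B$, $C$, $D$, it suffices to realize each of these four elements as a $\Z$-linear combination of Hodge polynomials of the four candidate generators $\CP^1$, $E$, $S_K$, $S_{NK}$. Two are immediate: $h(\CP^1)=(1+xy)z=A$ and $h(E)=(1+x)(1+y)z=A+B$, giving $A,B\in\cH_\ast$.

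The next step is to extract $C$ from the Kähler surface $S_K$. Using Hodge symmetry $h^{p,q}=h^{q,p}$ together with Serre duality, one computes
\[
h(S_K)=A^2+h^{1,0}(S_K)\cdot AB+h^{2,0}(S_K)\cdot B^2-\tau(S_K)\cdot C,
\]
where the coefficient of $C$ comes out as $h^{1,1}-2-2h^{2,0}=-\tau$ by the standard signature formula for Kähler surfaces. Since by hypothesis $\tau(S_K)=\pm 1$, this can be solved for $C$ over $\Z$ in terms of $A$, $B$ and $h(S_K)$; in the explicit case $S_K=\CP^2$ one simply reads off $C=A^2-h(\CP^2)$.

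For $D$, I would perform the analogous decomposition for an arbitrary compact complex surface $S$, this time without Hodge symmetry, obtaining
\[
h(S)=A^2+h^{0,1}(S)\cdot AB+h^{2,0}(S)\cdot B^2+\bigl(h^{1,0}(S)-h^{0,1}(S)\bigr)\cdot D+\bigl(h^{1,1}(S)-2-2h^{2,0}(S)\bigr)\cdot C,
\]
where $AB-D$ plays the role of the Serre-dual partner $\tilde D=(y+x^2y)z^2$ of $D$. The key structural input is Kodaira's theorem that a compact complex surface is Kähler iff $b_1$ is even; equivalently, for a non-Kähler surface $h^{0,1}-h^{1,0}=1$ (the Frölicher spectral sequence degenerating on surfaces ensures this difference is well-defined). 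Hence the coefficient of $D$ in $h(S_{NK})$ is a unit in $\Z$, and one can solve for $D$ as a $\Z$-linear combination of $A$, $B$, $C$ and $h(S_{NK})$; concretely, for the Hopf surface one finds $D=A^2+AB-2C-h(\text{Hopf})$.

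The plan is therefore essentially a bookkeeping exercise once the right generators are chosen; the only nontrivial inputs are the signature $\pm 1$ hypothesis (guaranteeing a unit coefficient of $C$) and the oddness of $b_1$ for non-Kähler surfaces (guaranteeing a unit coefficient of $D$). There is no real obstacle beyond these two structural facts, so I expect the proof to be a short direct calculation.
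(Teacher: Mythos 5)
Your proposal is correct and follows essentially the same route as the paper: both reduce to Theorem \ref{thm: alg str Hodge ring} and realize the generators $A$, $B$, $C$, $D$ by $\CP^1$, $E-\CP^1$, a K\"ahler surface of signature $\pm 1$ (via the Hodge index theorem making the coefficient of $C$ a unit), and a non-K\"ahler surface (via $h^{0,1}=h^{1,0}+1$ making the coefficient of $D$ a unit). The only difference is that you spell out the explicit surface decompositions that the paper imports from Kotschick--Schreieder and from the surface classification literature, and your formula $D=A^2+AB-2C-h(H)$ agrees with the paper's subsequent remark.
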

\begin{proof}
As noted in \cite[Cor. 3]{kotschick_hodge_2013}, $h(\CP^1)=A$, $h(E-\CP^1)=B$, $h(S_K)\equiv \pm C\mod \langle A^2, B^2, AB\rangle$ and $A,B,C$ generate the Hodge ring of K\"ahler manifolds, $\cH^K_\ast$, i.e. the subring $\cH_\ast^{form}$ generated by polynomials $(\sum h^{p,q} x^py^q)z^n$ satisfying the additional relation $h^{p,q}=h^{q,p}$. Since a non-K\"ahler surface satisfies $h^{0,1}(S_{NK})=h^{1,0}(S_{NK})+1$ and $h^{0,2}(S_{NK})=h^{2,0}(S_{NK})$ (see e.g. \cite{barth_compact_1984}), its Hodge polynomial is congruent to $-D$ modulo $A^2$, $B^2$, $AB$ and $C$. \end{proof}

\begin{rem}
For concreteness, one may pick $S_K$ to be $\CP^2$ and $S_{NK}=H$ to be a Hopf surface. Then $C=h(\CP^1\times\CP^1-\CP^2)$ and $D+2C=h(E\times \CP^1-H)$.
\end{rem}
\begin{rem}
Theorem \ref{thm: H=Hform} was already known to D. Kotschick and S. Schreieder (unpublished).
\end{rem}

Next, we consider the Betti numbers:

\begin{definition}\label{def: de Rham ring}
	Let $\Z[t,z]$ be the graded polynomial ring with $|t|=0$, $|z|=1$. The \textbf{de Rham ring (of compact complex manifolds)} is the image of the natural map of graded rings
	\[
	dR:\CM_\ast\longrightarrow \Z[t,z]
	\]
	induced sending a manifold to its (augmented) Poincar\'e polynomial
	\[
	[X]\longmapsto dR(X):=\sum_{k\geq 0}b_k(X)t^k z^{\dim_\C X}.
	\]
\end{definition} 
\begin{definition}
Let $\DR^{form}_n\subseteq \Z[t,z]$ be the group of formal Poincar\'e polynomials, i.e. those of the form
\[\sum_{k=0}^{2n} b_k t^kz^n\qquad\text{s.t. } b_k=b_{2n-k}\text{ and } 2|b_n \text{ if } n\text{ is odd.}\]
The direct sum $\DR^{form}_\ast=\bigoplus \DR^{form}_n$ is a graded subring of $\Z[t,z]$.
\end{definition}

Note that in a sum as above $b_n$ is even if and only if the Euler characteristic $\sum (-1)^k b_k$ is even. 

There is a natural map 
$s: \cH_\ast^{form}\rightarrow\DR_\ast^{form}$ defined by $x,y\mapsto t$, $z\mapsto z$. However, as remarked in \cite{de_bruyn_hodge_2020}, one has $s\circ h(X)=dR(X)$ if and only if the Fr\"olicher spectral sequence degenerates at the first page.
\begin{thm}[van Dobben de Bruyn]\label{thm: alg str deRh ring}
Let $\Z[A,B,C,D]$ be graded as in Theorem \ref{thm: alg str Hodge ring}. The map
\begin{align*}
\psi: \Z[A,B,C,D]&\longrightarrow\DR_\ast^{form}\\
A&\longmapsto(1+t^2)z\\
B&\longmapsto 2tz\\
C&\longmapsto t^2z^2\\
D&\longmapsto(t+t^3)z^2
\end{align*}
is a surjection of graded rings with $\psi=s\circ\phi$. The kernel of $\psi$ is the ideal
\[
J=(A^2C-D^2, AB-2D, B^2-4C, BD-2AC).
\]
\end{thm}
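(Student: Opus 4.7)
The plan is to establish the three assertions in turn: $\psi = s\circ\phi$, surjectivity of $\psi$, and $\ker\psi = J$. The first is an immediate check on each of the four generators (e.g.\ $s(\phi(A)) = s((1+xy)z) = (1+t^2)z = \psi(A)$), whence via Theorem \ref{thm: alg str Hodge ring} surjectivity of $\psi$ reduces to surjectivity of $s\colon\cHf_\ast \to \DR^{form}_\ast$. The latter is easy: given any $P = \sum b_k t^k z^n \in \DR^{form}_n$ one distributes the $b_k$ freely among Hodge numbers on antidiagonals with $p+q = k\ne n$ (Serre duality then determines the antidiagonal $p+q = 2n-k$), while on the middle antidiagonal $p+q = n$ with $n$ odd the parity condition $2\mid b_n$ is precisely what allows one to split $b_n$ over the free Serre orbits (all of size two, since $n$ is odd). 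The inclusion $J\subseteq\ker\psi$ is a direct computation on the four generators; for instance $\psi(AB - 2D) = 2t(1+t^2)z^2 - 2(t+t^3)z^2 = 0$, and analogously for $B^2 - 4C$, $BD - 2AC$, $A^2C - D^2$.

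For the reverse inclusion, the plan is to show $\Z[A,B,C,D]/J$ is free abelian of rank $n+1$ in degree $n$ and compare with $\DR^{form}_n$, which is manifestly free of the same rank (basis $\{t^k + t^{2n-k} : 0\le k<n\}\cup\{t^n\}$ if $n$ is even, and $\{t^k+t^{2n-k} : 0\le k<n\}\cup\{2t^n\}$ if $n$ is odd). Orient the four relations as rewriting rules $AB \to 2D$, $B^2 \to 4C$, $BD \to 2AC$, $D^2 \to A^2C$; the weight $w(A^aB^bC^cD^d) = 2b+d$ strictly decreases under each, so reduction terminates and any class modulo $J$ is a $\Z$-linear combination of the normal forms left after reduction. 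These are precisely the monomials $A^aB^bC^cD^d$ satisfying $b\le 1$, $d\le 1$, $ab = 0$, and $bd = 0$, i.e.\ the three families $A^aC^c$, $A^aC^cD$, and $BC^c$; a direct count gives exactly $n+1$ such monomials in degree $n$.

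To verify $\Z$-linear independence of these normal forms, I would introduce the ring map $\iota\colon\Z[A,B,C,D]/J \to \Q[A,B]$ sending $A, B$ to themselves and $C\mapsto B^2/4$, $D\mapsto AB/2$; all four defining relations of $J$ are killed on the nose, so $\iota$ is well-defined, and under $\iota$ the reduced monomials become
\[
A^aC^c\mapsto\tfrac{1}{4^c}A^aB^{2c},\quad A^aC^cD\mapsto\tfrac{1}{2^{2c+1}}A^{a+1}B^{2c+1},\quad BC^c\mapsto\tfrac{1}{4^c}B^{2c+1},
\]
which are nonzero rational multiples of pairwise distinct $A,B$-monomials (separated by the parity of the $B$-exponent and, in the odd case, by whether $A$ appears), hence $\Z$-linearly independent. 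Consequently $\Z[A,B,C,D]/J$ is free abelian of rank $n+1$ in each degree, and the induced surjection $\Z[A,B,C,D]/J \twoheadrightarrow \DR^{form}_\ast$ between free graded groups of equal rank is forced to be an isomorphism, giving $\ker\psi = J$. The main effort is the normal-form analysis and rank count; once this is in place, the embedding $\iota$ makes $\Z$-independence essentially automatic.
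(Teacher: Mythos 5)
The paper attributes this theorem to van Dobben de Bruyn and supplies no proof of its own, so there is no in-text argument to compare against. Assessed on its own terms, your proof is correct and complete. The identity $\psi = s\circ\phi$ is indeed a four-line check; surjectivity of $s$ is exactly as you describe (for $k<n$ one may dump $b_k$ into $h^{k,0}$, say, and Serre duality then dictates the $2n-k$ antidiagonal, while for $k=n$ the parity constraint is precisely what makes the middle antidiagonal realizable when $n$ is odd); and the containment $J\subseteq\ker\psi$ is a direct computation that I have verified term by term. Your rewriting argument is the heart of the matter and it works: the weight $w(A^aB^bC^cD^d)=2b+d$ strictly drops under each of $AB\to 2D$, $B^2\to 4C$, $BD\to 2AC$, $D^2\to A^2C$, so reduction terminates and the irreducible monomials $A^aC^c$, $A^aC^cD$, $BC^c$ span; your count does give exactly $n+1$ of them in degree $n$ for both parities. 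The auxiliary map $\iota\colon\Z[A,B,C,D]/J\to\Q[A,B]$ neatly sidesteps any need to prove confluence, since the images of the normal forms are nonzero rational multiples of pairwise distinct monomials $A^iB^j$ (distinguished by the parity of $j$ and, for $j$ odd, by whether $i=0$), establishing $\Z$-linear independence directly. Matching this rank against $\rk\DR^{form}_n=n+1$ (your basis $\{t^k+t^{2n-k}\}_{0\le k<n}$ together with $t^n$ or $2t^n$ according to parity is correct), the surjection between free abelian groups of equal finite rank is forced to be an isomorphism. This is a clean and standard Gröbner-flavoured argument; one small stylistic note is that you could equally run the independence check by observing that $\psi$ itself already takes the normal forms to linearly independent elements of $\DR^{form}_\ast$, which would shortcut the introduction of $\iota$, but your route is perfectly sound.
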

Since $s:\cHf_\ast\rightarrow \DR_\ast^{form}$ is surjective and $\cHf_\ast=\cH_\ast$ is generated by manifolds in complex dimension $\leq 2$ for which the Fr\"olicher spectral sequence degenerates, so that $s\circ h=dR$ on those generators, we get:
\begin{cor}
	There is an equality $\DR_\ast=\DR^{form}_\ast$. 
\end{cor}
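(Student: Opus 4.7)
The plan is to prove the equality by two inclusions. The containment $\DR_\ast\subseteq\DR^{form}_\ast$ is immediate from general topology: Poincar\'e duality on the underlying compact oriented $2n$-manifold forces $b_k(X)=b_{2n-k}(X)$, and when $n$ is odd the cup-product pairing on $H^n(X;\R)$ is skew-symmetric, so $b_n(X)$ is even. Hence the Poincar\'e polynomial of every compact complex manifold lies in $\DR^{form}_\ast$.

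For the reverse inclusion I would exploit the triangle $\psi=s\circ\phi$ together with Theorem \ref{thm: H=Hform}. By Theorem \ref{thm: alg str deRh ring}, the ring $\DR^{form}_\ast$ is generated by the classes $A,B,C,D$ (viewed as $\psi$-images). By Theorem \ref{thm: H=Hform} and the proof given there, these same elements are realized in $\cHf_\ast=\cH_\ast$ as explicit $\Z$-linear combinations of the Hodge polynomials of $\CP^1$, an elliptic curve $E$, a K\"ahler surface $S_K$ of signature $\pm 1$ and a non-K\"ahler surface $S_{NK}$, all of complex dimension at most two.

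The key observation is that for each of these four manifolds the Fr\"olicher spectral sequence degenerates at $E_1$: this is trivial for curves and is a classical fact for all compact complex surfaces. As noted in the paragraph preceding Theorem \ref{thm: alg str deRh ring}, this degeneration is exactly what guarantees $s\circ h=dR$ on the generators. Therefore the same $\Z$-linear combinations of $\CP^1$, $E$, $S_K$, $S_{NK}$ whose images under $h$ give $A,B,C,D\in\cHf_\ast$ also map under $dR$ to $A,B,C,D\in\DR^{form}_\ast$. Since $\DR_\ast$ is a subring of $\DR^{form}_\ast$ and contains a generating set, the two agree.

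There is essentially no obstacle: the statement is a formal consequence of Theorem \ref{thm: H=Hform} and Theorem \ref{thm: alg str deRh ring} once one records Fr\"olicher degeneration in dimension $\leq 2$, and I do not expect this corollary to require any genuinely new construction.
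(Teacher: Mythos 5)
Your proof is correct and is essentially the paper's argument: $s$ is surjective onto $\DR^{form}_\ast$ (via $\psi=s\circ\phi$ and Theorem \ref{thm: alg str deRh ring}), and since $\cH_\ast=\cH^{form}_\ast$ is generated by manifolds of dimension $\leq 2$ whose Fr\"olicher spectral sequence degenerates, $s\circ h=dR$ on the subring they generate, so the $\psi$-images of $A,B,C,D$ lie in $\DR_\ast$. The only cosmetic point is that $C$ and $D$ are realized by $\Z$-linear combinations of \emph{products} of the listed generators (e.g. $C=h(\CP^1\times\CP^1-\CP^2)$), not of the four manifolds alone, which is harmless since $s\circ h$ and $dR$ are ring homomorphisms agreeing on those generators.
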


Next, we will consider Hodge and Betti numbers simultaneously. 

\begin{definition}\label{def: Hodge de Rham ring}
The \textbf{Hodge de Rham ring} $\HDR_\ast$ is the image of the diagonal map:
\begin{align*}
hdR:\CM_\ast&\longrightarrow \cH_\ast\times \DR_\ast\\
[X]&\longmapsto (h(X),dR(X))
\end{align*}
\end{definition}
As in \cite{de_bruyn_hodge_2020}, consider the following homomorphisms:
\begin{align*}
\chi:\cH_\ast&\rightarrow \Z[z]&\chi:\DR_\ast&\rightarrow \Z[z]&h^{0,0}:\cH_\ast&\rightarrow \Z[z]&b_0:\cH_\ast&\rightarrow \Z[z]\\
x,y&\mapsto -1&t&\mapsto -1&x,y&\mapsto 0&t&\mapsto 0
\end{align*}

\begin{lem}[van Dobben de Bruyn]\label{lem: ker chi b0}
The kernel of $(b_0,\chi):\DR\longrightarrow \Z[z]\times\Z[z]$ is the ideal generated by the two elements
\begin{align*}
d&:=(t+2t^2+t^3)z^2,\\
e&:=(t^2+2t^3+t^4)z^3.
\end{align*}
\end{lem}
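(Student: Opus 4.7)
The plan is in two steps. First, to verify the easy inclusion $(d,e)\subseteq\ker(b_0,\chi)$ by direct substitution; second, to show the reverse inclusion by proving that $(b_0,\chi)$ descends to an injection $R:=\DR/(d,e)\hookrightarrow\Z[z]\times\Z[z]$. For the former step, both $d$ and $e$ vanish at $t=0$ (no constant term), and at $t=-1$ one computes $(-1+2-1)z^2=0$ and $(1-2+1)z^3=0$, so $(b_0,\chi)(d)=(b_0,\chi)(e)=0$.

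For the injectivity of $R\to\Z[z]\times\Z[z]$, I would work in the presentation $\DR\cong\Z[A,B,C,D]/J$ from Theorem \ref{thm: alg str deRh ring}. Here $d=D+2C$, so in $R$ we can eliminate $D=-2C$. Since $\chi(A+B)=0$, the change of variables $E:=A+B$ is natural; substituting $B=E-A$ and $D=-2C$ into the generators of $J$ and including $e=(A+B)C=EC$ shows that $R\cong\Z[A,E,C]/\mathfrak{a}$, where $\mathfrak{a}$ is generated by
\[
AE-A^2+4C,\qquad E^2-AE,\qquad EC,\qquad A^2C-4C^2.
\]
The second relation comes from $B^2-4C=0$ after using the first; the fourth from $A^2C-D^2=0$, and one can check it is implied by the other three via $C(A^2-4C)=C\cdot AE=A\cdot EC=0$.

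I would then reduce monomials $A^aE^bC^c$ using these relations: $E^2=AE$ yields $E^b=A^{b-1}E$ for $b\geq 1$; then $AE=A^2-4C$ converts $A^aE$ (for $a\geq 1$) into $A^{a+1}-4A^{a-1}C$; the relation $EC=0$ kills any monomial containing both $E$ and $C$; and $A^2C=4C^2$ reduces $A^aC^c$ with $a\geq 2,c\geq 1$ iteratively to $4^{\lfloor a/2\rfloor}\cdot A^{a\bmod 2}C^{c+\lfloor a/2\rfloor}$. The net effect is that $R_0=\Z\{1\}$, $R_1=\Z\{A,E\}$, and for $n\geq 2$ the group $R_n$ is $\Z$-spanned by $A^n$ together with a single other element $X_n$, where $X_n=C^{n/2}$ if $n$ is even and $X_n=AC^{(n-1)/2}$ if $n$ is odd.

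Finally, applying $(b_0,\chi)$ one obtains
\[
A^n\mapsto(z^n,2^nz^n),\quad E\mapsto(z,0),\quad C^{n/2}\mapsto(0,z^n),\quad AC^{(n-1)/2}\mapsto(0,2z^n),
\]
and in each graded degree the relevant pair of image vectors has an upper-triangular $2\times 2$ coefficient matrix with determinant $\pm z^{2n}$ or $\pm 2z^{2n}$, hence is $\Z$-linearly independent. Because these two elements already surject onto $R_n$, this forces $R_n\to(\Z z^n)^2$ to be injective, completing the proof. The main obstacle will be carrying out the chain of algebraic reductions systematically enough to be confident that only two generators survive in each degree $\geq 2$; once that is done, the final injectivity check is a single $2\times 2$ determinant computation.
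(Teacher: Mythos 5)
The paper states this lemma without proof, attributing it to van Dobben de Bruyn and citing his article on the Hodge ring in positive characteristic, so there is no in-text argument to compare against. Your blind proof is correct and is the natural one given the presentation $\DR_\ast\cong\Z[A,B,C,D]/J$ of Theorem~\ref{thm: alg str deRh ring}. The identifications $d=D+2C$ and $e=(A+B)C$ check out; after eliminating $D=-2C$ and substituting $B=E-A$, the generators of $J$ become $A^2C-4C^2$, $AE-A^2+4C$, $E^2-2AE+A^2-4C$ and $-2EC$, and modulo $AE-A^2+4C$ and $EC$ these reduce to $\mathfrak{a}=(AE-A^2+4C,\ E^2-AE,\ EC)$ exactly as you describe, with $A^2C-4C^2=C(A^2-4C)\equiv C\cdot AE=A\cdot EC\equiv 0$ showing the first is redundant. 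Your monomial reduction then correctly leaves $\{A^n,X_n\}$ as a spanning set of $R_n$ for $n\geq 2$, so that a surjection $\Z^2\twoheadrightarrow R_n$ postcomposed with $(b_0,\chi)$ has matrix with determinant $\pm z^{2n}$ or $\pm 2z^{2n}$; this forces $\Z^2\cong R_n$ and injectivity of $(b_0,\chi)$ on $R_n$. One small imprecision: for $n=1$ the coefficient matrix $\begin{pmatrix}1&2\\1&0\end{pmatrix}$ coming from $A\mapsto(z,2z)$, $E\mapsto(z,0)$ is not upper-triangular, but its determinant is $-2\neq 0$ so the injectivity conclusion still holds; for $n\geq 2$ your description is literally accurate. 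In sum, the argument is sound and complete in all essentials.
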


Further, define the Fr\"olicher defect
\begin{align*}
FD: \cH_\ast\times\DR_\ast&\longrightarrow\DR_\ast\\
(a,b)&\longmapsto s(a)-b
\end{align*}
and denote by $FD_{\leq 2}$ the composition of $FD$ and the truncation $\DR_\ast\rightarrow \DR_{\leq 2}$.\medskip

Then a natural candidate for $\HDR_\ast$ is the ring
\[
\HDR_\ast^{form}:=\left\{(a,b) \in \cH^{form}_\ast \times \DR^{form}_\ast \bigg| \substack{\chi(a)=\chi(b)\\h^{0,0}(a)=b_0(b)\\FD_{\leq 2}(a,b)=0}\right\}.
\]
\begin{thm}\label{thm. Hodge de Rham ring}
	There is an equality $\HDR_\ast=\HDR_\ast^{form}$. In fact, $\HDR_\ast^{form}$ may be generated by the images of $\CP^1$, $\CP^2$ and a finite number of complex nilmanifolds of complex dimension $\leq 4$.
\end{thm}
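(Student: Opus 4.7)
The proof splits into the two inclusions $\HDR_\ast \subseteq \HDR_\ast^{form}$ and $\HDR_\ast^{form} \subseteq \HDR_\ast$.

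The first inclusion is essentially a checklist. For any compact complex $n$-manifold $X$, I would verify the defining conditions of $\HDR_\ast^{form}$ in turn: Serre duality gives $h(X) \in \cHf_\ast$; Poincaré duality together with the parity of $b_n$ for odd $n$ (which follows from the existence of a nondegenerate alternating form on middle cohomology) gives $dR(X) \in \DR^{form}_\ast$; the fact that the Frölicher spectral sequence has $E_\infty$ computing $H_{dR}$, combined with Euler characteristic being an alternating sum invariant of any spectral sequence, gives $\chi(h(X)) = \chi(dR(X))$; connectedness of $X$ corresponds to $h^{0,0}(X) = b_0(X) = 1$; and degeneration of the Frölicher spectral sequence in complex dimensions $\leq 2$ gives $FD_{\leq 2}(h(X), dR(X)) = 0$.

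For the reverse inclusion, the plan is to exhibit a set of ring generators for $\HDR_\ast^{form}$ and realize each by a $\Z$-linear combination of manifolds. Consider the surjective projection $\pi: \HDR_\ast^{form} \to \cHf_\ast$, $(a,b) \mapsto a$. By Theorem \ref{thm: H=Hform}, the right-hand side is generated by $A, B, C, D$, realized by $\CP^1$, $E$, $\CP^2$ (since $C = A^2 - h(\CP^2)$) and a non-Kähler surface. Since all these manifolds live in complex dimension $\leq 2$ where the Frölicher spectral sequence degenerates, they automatically lift to $\HDR_\ast^{form}$ via $X \mapsto (h(X), s(h(X)))$; moreover $E$ can be taken to be a complex torus (a nilmanifold) and the non-Kähler surface can be taken to be a Kodaira surface (also a nilmanifold), so $\CP^1$ and $\CP^2$ together with low-dimensional nilmanifolds suffice for this "Kähler part". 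It then remains to produce generators for the kernel of $\pi$, which consists of pairs $(0,b)$ with $b \in \DR_\ast^{form}$ in the ideal $(d,e)$ of Lemma \ref{lem: ker chi b0} and vanishing in degrees $\leq 2$. In degree $3$ this kernel is spanned by $Ad, Bd, e$; higher degrees are generated by multiplying these by elements of $\cHf_\ast$, together with a finite list of new generators appearing in degree $4$.

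The construction step is the main obstacle. Concretely, for each required kernel generator I need a nilmanifold $N$ (or a $\Z$-linear combination thereof, possibly including products with Kähler generators) whose pair $hdR(N) - (h(N), s(h(N)))$ realizes the desired element of the kernel. In complex dimension $3$ the Iwasawa manifold and its small deformations are natural candidates to span the degree-$3$ piece of the kernel; in complex dimension $4$ one has to exhibit and compute a handful of additional nilmanifolds, and the Hodge and Betti number computations can be carried out via left-invariant forms as in Corollary \ref{cor: left invariant cohomoogy invariants}, reducing to finite-dimensional linear algebra implementable in SageMath. The hard part is ensuring that the resulting integer lattice generated by the computed $hdR$-vectors actually exhausts the required kernel over $\Z$ (not merely over $\Q$), which pins down the precise list of nilmanifolds one must construct and verify.
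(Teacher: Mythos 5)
Your proposal is correct and follows essentially the same route as the paper: verify membership in $\HDR_\ast^{form}$, reduce via the section $a \mapsto (a, s(a))$ to the kernel of the projection to $\cHf_\ast$, identify that kernel in degrees $\geq 3$ as the ideal generated by $e$, $\psi(A)d$, $\psi(B)d$, $\psi(C)d$, $\psi(D)d$ using Lemma \ref{lem: ker chi b0}, and realize these via the Frölicher defect $FD$ of suitable nilmanifolds. The paper additionally trims the list by observing $\psi(B)d = 2e$ and $\psi(D)d = \psi(A)e$, so that only $e$, $\psi(A)d$, $\psi(C)d$ require geometric realization (by the Iwasawa manifold, a type-$ii$ deformation of it, and the difference of two $4$-dimensional nilmanifolds from Lemma \ref{lem: snN calc}), which resolves the integrality concern you raised.
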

\begin{proof}
  The ring $\cH_\ast$ may be generated by $\CP^1$, $\CP^2$, an ellitpic curve and the Kodaira-Thurston manifold. The latter two are nilmanifolds. For all of these the Fr\"olicher spectral sequence degenerates, so given $(a,b)\in\HDR^{form}_\ast$, the element $(a,s(a))$ lies in $\HDR_\ast$ and we may assume that $a=0$. Then $b$ has to be concentrated in degrees $\geq 3$ and satisfy $\chi(b)=\chi(0)=0$ and $b_0(b)=h^{0,0}(b)=0$. By Lemma \ref{lem: ker chi b0} the kernel $K$ of $(b_0,\chi):\DR_\ast\rightarrow \Z[z]\times\Z[z]$ is the ideal generated by the two elements $d$ and $e$. Thus, $K\cap \HDR_{\geq 3}$ is generated by $e, \psi(A)d,\psi(B)d,\psi(C)d, \psi(D)d$. Now note that for any $X\in \CM_\ast$ the element $(0,FD(X))=(h(X),s(h(X)))-(h(X),dR(X))$ lies also in $\HDR_\ast$. Therefore, it suffices to find (linear combinations of) compact complex manifolds s.t. $FD(X)$ equals  $e$, $\psi(A) d$, $\psi(B) d$, $\psi(C) d$ and $\psi(D) d$. Because $\psi(B)d=2e$ and $\psi(A)e=\psi(D)d$, the last two of these expressions are superfluous. The others may be realized as follows:\medskip
   
    $e=FD(I_{ii})$ for $I_{ii}$ a type $ii$-deformation of the Iwasawa-manifold, which has the following Betti and (total) Hodge numbers $h^k=\sum_{p+q=k}h^{p,q}$ (see \cite[p. 96]{nakamura_complex_1975}):
    \[
    (b_0(I_{ii}),...,b_6(I_{ii}))=(1,4,8,10,8,4,1)\qquad (h^0(I_{ii}),...,h^6(I_{ii}))=(1,4,9,12,9,4,1).
    \]  
    
	$\psi(A)d+e=FD(I)$ for $I$ the Iwasawa manifold itself, which has the following Betti and Hodge numbers (see \cite[p. 96]{nakamura_complex_1975}):
	\[
	 (b_0(I),...,b_6(I))=(1,4,8,10,8,4,1)\qquad (h^0(I),...,h^6(I))=(1,5,11,14,11,5,1).
	\]
	Finally, $\psi(C)d=FD(X-Y)$ for $X$ and $Y$ as in Lemma \ref{lem: snN calc}.\end{proof}

\begin{rem}
If we care about rational generators only, one can omit the four-dimensional nilmanifolds, since $2\psi(C)d=\psi(B) e$.
\end{rem}

More generally, we can consider linear relations between Hodge, Betti and Chern numbers. Let $\Omega_\ast^U$ denote the complex bordism ring. 

\begin{rem}
	There is an embedding into a polynomial ring on countably many generators $\tau:\Omega_\ast^U\subseteq \Z[b_1,b_2,...]$, where $|b_i|=2i$ and for an almost complex manifold of real dimension $2n$,  $\tau(X)=\sum_{I} c_I(X) b_I$, where $I$ runs over all $n$-tuples $(i_1,...,i_n)\in \Z_{\geq 0}^n$ such that $\sum i_k\cdot k=n$, $c_I(X)$ denote the Chern numbers $c_I(X):=\langle c_1(TX)^{i_1}\cdot...\cdot c_n(TX)^{i_n}, [X]\rangle$ and $b_I=b_{1}^{i_1}\cdot...\cdot b_{n}^{i_n}$. By definition and since $\Omega^U_\ast$ may be generated by compact complex manifolds, the image of $\tau$ in degree $n$ is identified with the kernel of all universal linear relations and congruences between the Chern numbers of $n$-dimensional compact complex manifolds (in the sense of Definition \ref{def. lin rel. Hodge}). These are completely known. In fact, there are only congruences, namely precisely those given by the Index theorem applied to all bundles associated with the tangent bundle  \cite{atiyah-hirzebruch_klassen_1961}, \cite{stong_relations_1965}, \cite{hattori_relations_1966}.
\end{rem}
\begin{definition}\label{def: Hodge de Rham Chern ring}
	The \textbf{Hodge de Rham Chern ring} $\HDRC_\ast$ is the image of the diagonal map $\CM_\ast\longrightarrow \HDR_\ast\times\Omega_\ast^U.$ The \textbf{Hodge Chern ring} $\cH\cC_\ast$ is the image of the diagonal map $\CM_\ast\longrightarrow \cH_\ast\times\Omega_\ast^U$. 
\end{definition}  
The Todd genera define a homomorphism $td_\ast:\Omega_\ast^U\rightarrow\Z[x,z]$ sending a class $[X]\in\Omega_n$ to the polynomial $\sum_{p=0}^n td_p(X) x^pz^n$. Following \cite{kotschick_hodge_2013}, we interpret the Hirzebruch genus as the homomorphism
\begin{align*}
\chi_\ast:\cH_\ast&\longrightarrow \Z[x,z]\\
y&\longmapsto -1.
\end{align*}
Setting $x=1$, it specializes to the signature $\sigma:\cH_\ast\rightarrow \Z[z]$, and setting $x=-1$, it specializes to the Euler characteristic $\chi$. For any $X\in \cH_n$, write $\chi_\ast(X)=\sum_p \chi_p(X) x^p z^n$. By precomposing with the projection to the first factor, we may also consider $\chi_\ast$ as a map with source $\HDR_\ast$.
Since the Chern classes of all complex nilmanifolds vanish (Observation \ref{obs: c_i=0}), Hirzebruch-Riemann-Roch together with Theorem \ref{thm. Hodge de Rham ring} yields (c.f. \cite[Thm. 8]{kotschick_hodge_2013}):
\begin{lem}
	The image $\cH ir_\ast$ of $\chi_\ast$ is a polynomial ring, generated by the images of $h(\CP^1)$ and $h(\CP^2)$. The kernel of $\chi_\ast$ may be generated by complex nilmanifolds.
\end{lem}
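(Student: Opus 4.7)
The plan is to combine the presentation $\cH_\ast = \Z[A,B,C,D]/(G)$ from Theorem \ref{thm: alg str Hodge ring} with an injectivity argument in $\Z[x,z]$. A direct calculation gives
$$\chi_\ast(A) = (1-x)z, \qquad \chi_\ast(B) = -(1-x)z, \qquad \chi_\ast(C) = -xz^2, \qquad \chi_\ast(D) = 2xz^2,$$
so $A+B$ and $D+2C$ both lie in $\ker\chi_\ast$. The cleanly useful observation is that substituting $B \equiv -A$ and $D \equiv -2C$ makes the relation $G = D^2 - ABD + C(A^2 + B^2 - 4C)$ collapse to $0$ identically, so
$$\cH_\ast / (A+B,\, D+2C) \;\cong\; \Z[A, C],$$
and $\chi_\ast$ factors through this quotient.

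Next I would verify that the induced map $\Z[A,C] \to \Z[x,z]$, $A \mapsto (1-x)z$, $C \mapsto -xz^2$, is injective. By $z$-homogeneity this reduces to the $\Z$-linear independence of $\{(1-x)^j (-x)^k : j + 2k = n\}$ in $\Z[x]$ for each $n$, which is immediate from a leading-term comparison: $(1-x)^j(-x)^k = (-1)^{j+k} x^{n-k} + (\text{lower order})$, so the summands are distinguished by their $x$-degrees $n-k$. This identifies $\cH ir_\ast$ with the polynomial ring $\Z[A,C]$ which, in view of $h(\CP^2) = A^2 - C$, is also freely generated by $\chi_\ast(h(\CP^1))$ and $\chi_\ast(h(\CP^2))$.

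For the claim about $\ker \chi_\ast$, I would realize both of its generators by $\Z$-linear combinations of complex nilmanifolds. The elliptic curve $E$ is a nilmanifold with $h(E) = A + B$. A direct calculation from the Hodge diamond of the Kodaira--Thurston manifold $KT$ (a $2$-dimensional complex nilmanifold) gives $h(KT) = (1+x+x^2)(1+y)^2 z^2 = (A+B)^2 - 2C - D$, so $h(E \times E) - h(KT) = D + 2C$ exhibits the second generator, and $\ker \chi_\ast$ is the ideal of $\cH_\ast$ generated by $[E]$ and $[KT]$. The only nontrivial obstacle is verifying the Hodge numbers of $KT$, a standard but slightly laborious computation; consistency with Observation \ref{obs: c_i=0} and Hirzebruch--Riemann--Roch (which force $\chi_\ast(N) = 0$ for any nilmanifold of positive dimension) provides a useful sanity check.
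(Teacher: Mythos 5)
Your proof is correct and matches the approach the paper delegates to its citation of \cite[Thm.~8]{kotschick_hodge_2013}: one reduces to the presentation $\cH_\ast=\Z[A,B,C,D]/(G)$, observes $A+B=h(E)$ and $D+2C=h(E\times E)-h(KT)$ are realized by complex nilmanifolds (hence lie in $\ker\chi_\ast$ via Observation~\ref{obs: c_i=0} and Hirzebruch--Riemann--Roch), and checks the quotient maps isomorphically onto $\Z[(1-x)z,\,-xz^2]\subseteq\Z[x,z]$. Your explicit verification that $G$ vanishes under $B\mapsto -A$, $D\mapsto -2C$, and the leading-term injectivity argument, supply exactly the details the paper leaves to the cited Kähler-case theorem.
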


\begin{thm}\label{thm: HDRC ring diagonal}
There is an equality  \[\HDRC_\ast=\left\{(a,b)\in\HDR_\ast \times \Omega_\ast^U \mid\chi_\ast(a)=td_\ast(b)\right\}.\]
\end{thm}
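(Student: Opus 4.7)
The inclusion $\HDRC_\ast \subseteq \{(a,b) \mid \chi_\ast(a) = td_\ast(b)\}$ is immediate from Hirzebruch--Riemann--Roch: for every compact complex manifold $X$ one has $\chi_p(X) = td_p(X)$ for all $p$, and hence $\chi_\ast(h(X)) = td_\ast([X])$, which extends by linearity to all of $\CM_\ast$.

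For the reverse inclusion I plan a three-term adjustment. Given $(a,b)$ with $\chi_\ast(a) = td_\ast(b)$, Theorem \ref{thm. Hodge de Rham ring} produces $X_1 \in \CM_\ast$ with $hdR(X_1) = a$, which can be taken in the subring $\cA \subseteq \CM_\ast$ generated by $\CP^1$, $\CP^2$ and the finite set of nilmanifolds $N_1,\ldots,N_r$ supplied by that theorem. By HRR, $td_\ast([X_1]) = \chi_\ast(a) = td_\ast(b)$, so $b - [X_1] \in \ker(td_\ast) \subseteq \Omega^U_\ast$. Using that $\Omega^U_\ast$ is generated as a ring by classes of compact complex (in fact smooth projective) manifolds, pick $X_2 \in \CM_\ast$ with $[X_2] = b - [X_1]$ and set $c := hdR(X_2)$; another application of HRR gives $\chi_\ast(c) = td_\ast([X_2]) = 0$.

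The key step is to lift this $c \in \ker(\chi_\ast) \subseteq \HDR_\ast$ to an $X_3 \in \CM_\ast$ with $hdR(X_3) = c$ and $[X_3] = 0$, because the combination $X := X_1 + X_2 - X_3$ then satisfies $hdR(X) = a$ and $[X] = b$. To produce such a lift, choose any $\tilde c \in \cA$ with $hdR(\tilde c) = c$ and decompose it as a $\Z$-linear combination of monomials $\tilde c = P + Q$, where $P$ gathers the monomials involving only $\CP^1$ and $\CP^2$ and $Q$ those containing at least one nilmanifold factor. Observation \ref{obs: c_i=0} together with HRR yields $\chi_\ast(h(N_k)) = 0$, so the ring homomorphism $\chi_\ast \circ h$ annihilates $Q$; combined with $\chi_\ast(c) = 0$ this forces $\chi_\ast(h(P)) = 0$ in $\Z[x,z]$. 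Since the preceding lemma identifies $\cH ir_\ast$ as the polynomial ring on $\chi_\ast(h(\CP^1))$ and $\chi_\ast(h(\CP^2))$, the algebraic independence of these two generators forces every coefficient of $P$ to vanish, i.e.\ $P = 0$ as a formal sum. Thus $\tilde c = Q$, and since each summand of $Q$ contains a nilmanifold factor with trivial Chern numbers (Observation \ref{obs: c_i=0}), $[\tilde c] = 0$ in the torsion-free ring $\Omega^U_\ast$, so $X_3 := \tilde c$ works.

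The substantive content is already packaged into the preceding structural results --- finite generation of $\HDR_\ast$ by $\CP^1$, $\CP^2$ and nilmanifolds (Theorem \ref{thm. Hodge de Rham ring}), the polynomial structure of $\cH ir_\ast$ on the two projective generators (the preceding lemma), and the simultaneous vanishing of $\chi_\ast \circ h$ and of the bordism class on nilmanifolds (Observation \ref{obs: c_i=0}) --- so the main obstacle is only to identify this clean split; given these ingredients the assembly is an elementary formal calculation parallel to \cite[Thm.~4]{kotschick_hodge_2013}.
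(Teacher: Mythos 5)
Your proof is correct and follows essentially the same strategy as the paper's: reduce to adjusting by an element in $\ker\chi_\ast$, and then observe that such elements can be represented by linear combinations of manifolds with vanishing Chern numbers (hence trivial in $\Omega^U_\ast$). The paper reaches the same conclusion slightly more compactly by directly invoking the preceding lemma's assertion that $\ker\chi_\ast$ is generated by complex nilmanifolds, whereas you in effect re-derive that fact for the element at hand via the $P=0$ algebraic independence argument; both use exactly the same structural inputs.
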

\begin{proof}
By Hirzebruch-Riemann-Roch, there is an inclusion from the left hand side to the right hand side. On the other hand, if $(hdR(X),[Y])\in\HDR_\ast\times \Omega_\ast^U$ s.t. $\chi_\ast (X)=td_\ast(Y)$ for all $p$, then $\chi_\ast(X)=\chi_\ast(Y)$ for all $p$. Hence, $hdR([Y]-[X])$ lies in the kernel of $\chi_\ast$. This kernel is generated by complex nilmanifolds, which by Observation \ref{obs: c_i=0} have vanishing Chern classes, so in particular represent the zero element in $\Omega_\ast^U$. Thus, writing $hdR(Y-X)=hdR(M)$, where $M$ is a linear combination of nilmanifolds, we see that
\begin{align*}
(hdR(X),[Y])&=(hdR(Y),[Y])-(hdR(Y-X), 0)\\
&=(hdR(Y),[Y])-(hdR(M),[M])
\end{align*}
lies in the image of the diagonal map.
\end{proof}
\begin{rem}\label{rem: HC diagonal}
	Analogously, $\cH\cC_\ast=\{(a,b)\in\cH_\ast \times \Omega_\ast^U \mid\chi_\ast(a)=td_\ast(b)\}$.
\end{rem}

Theorem \ref{thmintro: HDRC rels} from the introduction is now a reformulation of what we have shown in this section. In fact, as in Definition \ref{def. lin rel. Hodge} we may now say:
\begin{definition}\label{def.}
	A \textbf{universal linear relation (resp. congruence) between Hodge, Betti and Chern numbers} of $n$-dimensional compact complex manifolds is a map of abelian groups
	\[
	(\Z[t,z]\times\Z[x,y,z]\times\Z[b_1,b_2,...])_n\longrightarrow \Z
	\]
	(resp to $\Z/m\Z$) which vanishes identically on $\cH\DR\cC_n$.
\end{definition}

\begin{proof}[Proof of Theorem \ref{thmintro: HDRC rels}]
We may argue just as in Remark \ref{rem: Thm A for Hodge}:  In fact, by the defintions and Theorem \ref{thm: H=Hform}, Theorem \ref{thm. Hodge de Rham ring} and Theorem \ref{thm: HDRC ring diagonal}, $\cH\DR\cC_\ast$ is exactly the subspace given by all the relations and congruences described in Theorem \ref{thmintro: HDRC rels}. Hence, any other relation (resp. congruence) has to be a linear combination of these, since otherwise it would cut out an even smaller subspace.
\end{proof}

\section{Bimeromorphic invariants}\label{sec: bimero}

Recall (Definition \ref{def: Hodge ring}) that the Hodge ring $\cH_\ast\subseteq \Z[x,y,z]$ is the ring generated by the Hodge polynomials of compact complex manifolds. The differences of bimeromorphic compact complex manifolds generate an ideal $\cB\subseteq \cH_\ast$ and a linear combination (resp. congruence) of Hodge numbers of $n$-dimensional compact complex manifolds invariant under bimeromorphisms is nothing but a map $\cH_n\to \Z$ (resp. $\cH_n\to\Z/m\Z$ for some $m$) that factors over $\cH_n/\cB_n$. We therefore have to describe $\cB$. For this, it will be useful to consider the map
\[
p:\cH_\ast\longrightarrow \Z[x,y,z]/(xy)
\]
induced by the projection $\Z[x,y,z]\to\Z[x,y,z]/(xy)$. Similarly, consider the map $p^{\cH\cC}:=p\circ \pr$, where 
$\pr:\cH\cC_\ast\to\cH_\ast$ is the projection (c.f. Definition \ref{def: Hodge de Rham ring}) and denote by $\cB^{\cH\cC}\subseteq \cH\cC_\ast$ the ideal generated by differences of bimeromorphic compact complex manifolds.

\begin{thm}\label{thm: bimeromorphic invar. Hodge Chern}$\,$
	\begin{enumerate}
		\item The degree $n$ part of the image of $p$ (and a fortiori of $p^{\cH\cC}$) is free of rank $2n$, with a basis given by 
		\[z^n, xz^n, yz^n, x^2z^n, y^2z^n...,x^{n-1}z^n,y^{n-1}z^n, (x^n+y^n)z^n.\]
		\item There are equalities $\ker p = (C) = \cB$.
		\item There is an equality $\ker p^{\cH\cC}=\cB^{\cH\cC}$.
	\end{enumerate}
\end{thm}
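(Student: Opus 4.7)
The plan is to establish the three parts in sequence, each building on the previous.

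For (1), the key input is Theorem \ref{thm: H=Hform}: $\cH_\ast = \cH^{form}_\ast$ imposes Serre duality $h^{p,q} = h^{n-p, n-q}$ on every element of $\cH_n$. Reducing modulo $xy$ kills all terms with $p, q \geq 1$, leaving $h^{0,0} z^n + \sum_{1 \leq k \leq n-1}(h^{k,0} x^k + h^{0,k} y^k) z^n + h^{n,0}(x^n + y^n) z^n$, with the final coefficient identification coming from Serre duality $h^{n,0} = h^{0,n}$. This places the image inside the claimed rank-$2n$ subgroup. For realization, I would compute $p(A) = z$, $p(B) = (x+y)z$, $p(C) = 0$, $p(D) = xz^2$, and $p(D') = yz^2$, where $D' := AB - D \in \cH_\ast$ is the Serre-dual of $D$. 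Products of these in degree $n$ then span the claimed basis by a direct calculation.

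For (2), the containment $(C) \subseteq \ker p$ is trivial since $p(C) = 0$. The reverse follows from the presentation $\cH_\ast \cong \Z[A, B, C, D]/(G)$ of Theorem \ref{thm: alg str Hodge ring}: we have $\cH_\ast / (C) \cong \Z[A, B, D]/(D^2 - ABD)$, with $\Z$-basis $\{A^a B^b, A^a B^b D\}$. A direct calculation shows these map to linearly independent elements in $\Z[x,y,z]/(xy)$ in each degree, giving injectivity of $\cH_\ast/(C) \to p(\cH_\ast)$ and thus $\ker p = (C)$. For $\cB \subseteq \ker p$: both $h^{p,0}$ and $h^{0,q}$ are bimeromorphic invariants of compact complex manifolds (via the pullback isomorphism on $H^0(-, \Omega^p)$ for $h^{p,0}$, and $Rf_* \mathcal{O}_{\widetilde X} = \mathcal{O}_X$ for proper modifications for $h^{0,q}$). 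For $(C) \subseteq \cB$: the blow-up $\widetilde{\CP^2}$ of $\CP^2$ at a point satisfies $h(\widetilde{\CP^2}) - h(\CP^2) = xyz^2 = C$, and taking products $(\widetilde{\CP^2} \times Z,\, \CP^2 \times Z)$ covers all of $(C) = C \cdot \cH_\ast$.

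For (3), the containment $\cB^{\cH\cC} \subseteq \ker p^{\cH\cC}$ follows from (2) by projection. For the reverse, given $(a, b) \in \cH\cC_\ast$ with $a \in (C)$, I would use (2) to write $a = \sum_i n_i (h(X_i) - h(Y_i))$ for bimeromorphic pairs $(X_i, Y_i)$; setting $b' := \sum_i n_i ([X_i] - [Y_i])$ yields $(a, b') \in \cB^{\cH\cC}$, reducing to the case $(0, c) := (a, b) - (a, b') \in \cH\cC_\ast$ with $td_\ast(c) = 0$. The main obstacle is then showing every such $(0, c) \in \cH\cC_\ast$ lies in $\cB^{\cH\cC}$, i.e., realizing every $c \in \ker td_\ast$ appearing this way as a $\Z$-linear combination of bimeromorphic cobordism differences $[X] - [Y]$ with matching Hodge polynomials $h(X) = h(Y)$. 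The natural strategy is to use blow-ups of fixed base manifolds along carefully paired smooth centers whose Hodge contributions cancel in pairs, while the Chern class corrections (computed via the standard blow-up formulas) vary independently; products with small-dimensional manifolds like $\CP^1$ and $\CP^2$ provide additional freedom. The core difficulty is verifying that the resulting span exhausts the relevant subgroup of $\ker td_\ast$, which requires careful combinatorial control over the interplay between Hodge and Chern data of blow-ups.
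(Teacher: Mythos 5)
Your parts (1) and (2) are correct: computing $p(A)=z$, $p(B)=(x+y)z$, $p(C)=0$, $p(D)=xz^2$, $p(AB-D)=yz^2$ and using the presentation $\cH_\ast\cong\Z[A,B,C,D]/(G)$ together with the blow-up of $\CP^2$ at a point and the bimeromorphic invariance of $h^{p,0}$, $h^{0,q}$ is exactly the argument the paper has in mind (it simply cites van Dobben de Bruyn for it, where the same computation is carried out in positive characteristic). Your reduction at the start of (3) is also sound: since $\cB=(C)=\ker p$ and every element of $\cB$ can be rewritten as a $\Z$-linear combination of differences $h(X_i)-h(Y_i)$ of bimeromorphic pairs (multiply the pairs by product manifolds), any $(a,b)\in\ker p^{\cH\cC}$ differs from an element of $\cB^{\cH\cC}$ by some $(0,c)\in\cH\cC_\ast$.

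The genuine gap is that you never prove the remaining claim, namely that every element of the form $(0,c)\in\cH\cC_\ast$ lies in $\cB^{\cH\cC}$; you only sketch a "natural strategy" and explicitly leave its core difficulty open. This is precisely the nontrivial content of part (3), and the paper closes it not by a new blow-up construction but by importing \cite[Thm.~13]{kotschick_hodge_2013}: there is a basis sequence $\beta_1=\CP^1,\ \beta_2=\CP^2-\CP^1\times\CP^1,\dots$ of $\Omega^U_\ast$ with $\beta_i\in\cB^{\cH\cC}$ for $i\geq 2$ (these are exactly the "blow-ups with cancelling Hodge contributions" you gesture at, but constructed and verified there). Combining this with the fact that $\cH\cC_\ast$ is generated by $E$, $H$, $\beta_1,\beta_2,\dots$, the paper gets a surjection $\Z[E,H,\beta_1]\to\cH\cC_\ast/\cB^{\cH\cC}\to\cH_\ast/\cB\cong\Z[A,B,D]/(D^2-ABD)$ whose kernel is principal, generated by $(E\beta_1-H)^2-\beta_1(E-\beta_1)(E\beta_1-H)$, which already vanishes in $\cH\cC_\ast/\cB^{\cH\cC}$; hence $\cH\cC_\ast/\cB^{\cH\cC}\cong\cH_\ast/\cB$ and $\ker p^{\cH\cC}=\pr^{-1}(\cB)=\cB^{\cH\cC}$. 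Without either citing that realization theorem or actually carrying out the combinatorial verification you defer ("that the resulting span exhausts the relevant subgroup of $\ker td_\ast$"), part (3) remains unproved in your proposal.
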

\begin{proof}
	Parts $1$ and $2$ follow with exactly the same proof as in the positive characteristic setting $\cite{de_bruyn_hodge_2020}$, using that the numbers $h^{p,0}$ and $h^{0,q}$ are bimeromorphic invariants. For this reason, also the inclusion $\cB^{\cH\cC}\subseteq\ker p^{\cH\cC}$ in $3.$ is clear. For the other inclusion, we use the observation made in \cite[Thm. 13]{kotschick_hodge_2013} that there exists a basis sequence $\beta_1=\CP^1,\beta_2=\CP^2-\CP^1\times\CP^1,...$ for $\Omega^U_\ast$ with $\beta_i\in\cB^{\cH\cC}$ for $i\geq 2$. 
	
	Since $\cH\cC_\ast$ is generated by $E,H,\beta_1,\beta_2,...$, we get a composition of surjective maps
	\[
	\Z[E,H,\beta_1]\longrightarrow \cH\cC_\ast/\cB^{\cH\cC}\overset{\pr}{\longrightarrow}\cH_{\ast}/\cB=\cH_\ast/(C)=\Z[A,B,D]/(D^2-ABD).
	\]
Because in $\cH_\ast/(C)$ we have $A=\beta_1$, $B=E-\beta_1$ and $D=E\times\beta_1-H$, the kernel of this composition is the principal ideal generated by the element $(E\beta_1-H)^2-\beta_1(E-\beta_1)(E\beta_1-H)$, which maps to zero in $\cH\cC_\ast/\cB^{\cH\cC}$. Thus, the map $\cH\cC_\ast/\cB^{\cH\cC}\to\cH_\ast/\cB$ is an isomorphism. Since $\ker p^{\cH\cC}=\pr^{-1}(\ker p)=\ker \pr^{-1}(\cB)$, this completes the proof. \end{proof}

\begin{proof}[Proof of Theorem \ref{thmintro: bimeromorphic invs}] A linear combination (resp. congruence) as in the theorem can be considered as a map of abelian groups $\cH\cC_n/\cB^{\cH\cC}_n\to R$ for $R=\Z$ or $\Z/m\Z$. Since $\cB^{\cH\cC}=\ker p^{\cH\cC}$, this means it is given as a linear combination of the coefficients of the monomials in part $1$ of Theorem \ref{thm: bimeromorphic invar. Hodge Chern}, which are the Hodge numbers mentioned in Theorem \ref{thmintro: bimeromorphic invs}.
\end{proof}

\section{The rational Hirzebruch problem for general complex manifolds}\label{sec: rat. Hirzebruch}

The goal of this section is to prove Theorem \ref{thmintro: top inv} from the introduction. Consider the ideal $\cI\subseteq \HDR_\ast\otimes\Q$ in the (rationalized) Hodge de Rham ring from section \ref{sec: Hodge, Betti, Chern}, generated by differences of homeomorphic complex manifolds.

\begin{thm}\label{thm: ideal homeomorphic, Hodge}
In degrees $n\geq 3$, $\cI$ coincides with the kernel of the map
\[
\pr:\HDR_\ast\otimes\Q\longrightarrow \DR_{\ast}\otimes\Q.
\]
Under the projection $\HDR_\ast\to \cH_\ast$ it maps bijectively to the kernel of
\[
(h^{0,0},\chi):\cH_\ast\otimes\Q\longrightarrow \Q[z]\times\Q[z].
\]
\end{thm}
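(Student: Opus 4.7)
The plan is to split the theorem into an easy inclusion, an algebraic identification of $\ker\pr$ in $\HDR_\ast\otimes\Q$, and a construction step realising the kernel by differences of homeomorphic complex manifolds.

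First, since homeomorphic compact complex manifolds share their Betti numbers, every generator $[X]-[Y]$ of $\cI$ satisfies $\pr([X]-[Y])=dR(X)-dR(Y)=0$, so $\cI\subseteq\ker\pr$. Composing with $\HDR_\ast\to\cH_\ast$ lands the image in $\ker(h^{0,0},\chi)$ because $h^{0,0}=b_0$ (the number of connected components) and $\chi$ (the Euler characteristic) are homeomorphism invariants.

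Next, I would compute $\ker\pr$ using the identification $\HDR_\ast=\HDR_\ast^{form}$ from Theorem \ref{thm. Hodge de Rham ring}. A pair $(a,b)\in\HDR_\ast^{form}$ lies in $\ker\pr$ iff $b=0$, which by the defining conditions forces $\chi(a)=0$, $h^{0,0}(a)=0$ and $s(a)=0$ in degrees $\le 2$. For $n\ge 3$ the last condition is vacuous because $s(a)$ is homogeneous of degree $n$ in the $z$-grading. Hence $(a,0)\mapsto a$ gives a bijection
\[
\ker\pr|_n \;\xrightarrow{\sim}\; \ker(h^{0,0},\chi)|_n \subseteq \cH_n\otimes\Q.
\]
This settles the bijectivity claim and reduces the theorem to the inclusion $\pr(\cI)\supseteq\ker(h^{0,0},\chi)$ inside $\cH_\ast\otimes\Q=\Q[A,B,C,D]/(G)$.

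For this reduction, I would realise generators of $\ker(h^{0,0},\chi)$ as Hodge polynomial differences of homeomorphic pairs. The natural supply comes from small deformations of complex structure: Ehresmann's fibration theorem makes the fibres of any smooth family diffeomorphic, while Hodge numbers may jump. Concretely, Nakamura's deformation of the Iwasawa manifold gives $[I]-[I_{ii}]\in\cI$ with nontrivial Hodge-polynomial difference in complex dimension $3$, as recorded in the proof of Theorem \ref{thm. Hodge de Rham ring}; further deformation families (Kodaira surfaces, higher analogues of Iwasawa-type manifolds) would furnish additional low-degree generators. Because $\cI$ is an ideal, multiplying a difference $[X]-[Y]\in\cI$ by any Kähler manifold $Z$ gives $[X\times Z]-[Y\times Z]\in\cI$ with $X\times Z$ homeomorphic to $Y\times Z$. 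Since $\cH_\ast$ is generated in degrees $\le 2$ by Kähler surfaces (Theorem \ref{thm: H=Hform}), this propagates a finite set of low-dimensional generators into every higher degree.

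The main obstacle is ensuring that the available pairs, after multiplication by Kähler classes, exhaust all of $\ker(h^{0,0},\chi)$. This is a finite matching problem in $\Q[A,B,C,D]/(G)$: enumerate a set of $\cH_\ast$-module generators of $\ker(h^{0,0},\chi)$ in each complex dimension where new ones appear, and exhibit each as the Hodge polynomial of a $\Q$-linear combination of diffeomorphic small deformations. Following the template of the Kähler case \cite{kotschick_hodge_2013}, a modest supply of deformations ought to suffice, but assembling that supply and performing the Hodge-number calculations that realise every algebraic generator is the substantive work.
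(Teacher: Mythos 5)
Your algebraic reduction is correct and matches the paper: you rightly observe that $\cI\subseteq\ker\pr$ because Betti numbers are topological, that the condition $FD_{\leq 2}(a,0)=0$ becomes vacuous in degree $\geq 3$ so that $\ker\pr$ is identified with $K'=\ker(h^{0,0},\chi)$ there, and that the real task is to realise the degree-$n$ part of $K'$ by differences of homeomorphic manifolds. The paper does exactly this, computing $K'=s^{-1}\langle d,e\rangle=\langle\tilde d,\tilde e,\ker s\rangle$ via Lemma \ref{lem: ker chi b0} and then listing explicit $\cH_\ast$-ideal generators in degrees $3$ and $4$.

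The gap is in the construction step, which you explicitly defer but whose difficulty you underestimate. Your proposed propagation mechanism — multiply a low-dimensional generator by a K\"ahler manifold — does not reach everything, and the reason is one the paper makes explicit: the degree-$2$ generator $T=AB-2D=(-x+y+x^2y-xy^2)z^2$ of $\ker s$ can \emph{never} be realised as a Hodge-polynomial difference of homeomorphic surfaces, because for compact complex surfaces $h^{1,0}$ and $h^{0,1}$ are determined by $b_1$. Consequently the elements $AT$, $BT$ in degree $3$ (and $CT$, $DT$ in degree $4$) cannot be obtained as K\"ahler multiples of a realised degree-$2$ generator; they require fresh $3$-dimensional constructions. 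The paper builds them from pairs of left-invariant complex structures on the nilmanifold associated with $\mathfrak h_{15}$ (following \cite{ceballos_invariant_2016}), alongside the Iwasawa deformations for $\tilde e$, $A\tilde d$, $B\tilde d$, and Kotschick's orientation-reversingly homeomorphic complete intersections for the $S$-multiples. The degree-$4$ generators $Q$, $DT$, $CT$, $C\tilde d$, $D\tilde d$ are then handled by algebraic identities modulo the ring relations, using the fact that we work rationally (e.g. $2C\tilde d\equiv B\tilde e$ and $4CT=2DS-ABS+B^2T$). Each of these realisations is an explicit Hodge-diamond computation, so the "modest supply of deformations" you hope for is in fact the bulk of the proof, and the $\mathfrak h_{15}$ family is essential — the Iwasawa deformations alone do not suffice. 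You should also note that once a generator arises from two left-invariant complex structures on the same nilmanifold, the two complex manifolds are not just homeomorphic but literally identical as smooth manifolds, which is stronger than what Ehresmann gives you and is what the paper uses later (Proposition \ref{prop: ideal diffeomorphic, Hodge}).
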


\begin{proof}
Since the Betti numbers are homeomorphism invariants, $\cI\subseteq K:=\ker\pr$. On the other hand, $K$ consists of all elements $(a,0)\in\HDR_\ast\subseteq\cH_\ast\times\DR_\ast$. They have to satisfy the relations $\chi(a)=\chi(0)=0$, $h^{0,0}(a)=b_0(0)=0$ and $FD_{\leq 2}(a,0)=s(a)_{\leq 2}=0$. I.e. we have an identification $K=K'\cap \ker s_{\leq 2}\subseteq\cH_\ast$ where $K':=\ker (h^{0,0},\chi)\subseteq\cH_\ast$. Since $FD_{\leq 2}(a,0)=0$ for all $a\in\cH_{\geq 3}$, this gives the identification $K=K'$ in degrees $\geq 3$. Since there is a commutative diagram
\[
\begin{tikzcd}
\cH_\ast\ar{r}{s}\ar{rd}[swap]{(h^{0,0},\chi)}&\DR_\ast\ar{d}{(b_0,\chi)}\\
&\Q[z]\times\Q[z]
\end{tikzcd}
\]
using Lemma $\ref{lem: ker chi b0}$, we obtain that $K'=s^{-1}\langle d,e\rangle=\langle \tilde{d},\tilde{e},\ker s\rangle$, where $\tilde{d}$, $\tilde{e}$ are arbitrary preimages of $d$ and $e$. We will choose $\tilde{d}=(x+2xy+xy^2)z^2$ and $\tilde{e}=(x^2+x^2y+xy^2+xy^3)z^3$. By Theorem \ref{thm: alg str deRh ring} and Theorem \ref{thm: alg str Hodge ring}, the kernel of $s$ is generated by $Q=A^2C-D^2$, $R:=BD-2AC$, $S:=B^2-4C$ and $T:=AB-2D$. Thus, $K'\cap\cH_{\geq 3}$ has the following generators:
\begin{align*}
A\tilde{d}, B\tilde{d}, \tilde{e}, AS, BS, AT, BT, R&&\text{in degree }3\\
C\tilde{d}, D\tilde{d}, CS, DS, CT, DT, Q&&\text{in degree }4
\end{align*}
We have to find  homeomorphic complex manifolds whose difference realizes these generators.\medskip

\textbf{Products with degree 2:} That $S$ can be realized as by a combination of differences of orientation reversingly homeomorphic complete intersections $X_1-Y_1$ and $X_2-Y_2$ has been shown in the proof of \cite[Thm. 10]{kotschick_hodge_2013}, as a consequence of the results in \cite{kotschick_orientation-reversing_1992}. By Theorem \ref{thm: H=Hform}, we may therefore realize $AS,BS,CS,DS$ as differences of homeomorphic manifolds.\medskip

In constrast, $T=(-x+y+x^2y-xy^2)z^2$ can never be realized. In fact, if two complex surfaces are homeomorphic, then their first Betti numbers have to coincide. But for surfaces, the Hodge numbers $h^{1,0}$ and $h^{0,1}$ (and therefore also $h^{2,1}$ and $h^{1,2}$) are determined by the first Betti number, so they would have to agree as well. Thus, we really have to realize all the other generators in degrees $3$ and $4$. We will do so in all cases by considering different left-invariant structures on a fixed nilmanifold:\medskip
 
\textbf{Degree 3:}  $\tilde{e}=h(I_{ii}-I_{iii})$ is realized by the difference  of a type $(ii)$ and a type $(iii)$ deformation of the Iwasawa manifold, see \cite[p. 96]{nakamura_complex_1975}.
\[\resizebox{2cm}{!}{$
\begin{array}{ccccccccc}
& & & &0& & & &\\[3mm]
& & &0& &0& & &\\[3mm]
& &0& &0& &1& &\\[3mm]
&0& &1& &1& &0&\\[3mm]
& &1& &0& &0& &\\[3mm]
& & &0& &0& & &\\[3mm]
& & & &0& & & &\\[3mm]
\end{array}$}
=
\resizebox{2cm}{!}{$
\begin{array}{ccccccccc}
& & & &1& & & &\\[3mm]
& & &2& &2& & &\\[3mm]
& &2& &5& &2& &\\[3mm]
&1& &5& &5& &1&\\[3mm]
& &2& &5& &2& &\\[3mm]
& & &2& &2& & &\\[3mm]
& & & &1& & & &\\[3mm]
\end{array}$}
-
\resizebox{2cm}{!}{$
\begin{array}{ccccccccc}
& & & &1& & & &\\[3mm]
& & &2& &2& & &\\[3mm]
& &2& &5& &1& &\\[3mm]
&1& &4& &4& &1&\\[3mm]
& &1& &5& &2& &\\[3mm]
& & &2& &2& & &\\[3mm]
& & & &1& & & &\\[3mm]
\end{array}$}
\]

$B\tilde{d}=h(X-Y)$ where the underlying manifold of both $X$ and $Y$ is a nilmanifold associated with the Lie algebra $\mathfrak{h}_{15}$ and the complex structures correspond to parameters $(\rho,B,c)=(1,2,0)$ and $(1,B,c)$ with $B\neq0\neq c$ in the notation of \cite{ceballos_invariant_2016}, see \cite[proof of Thm. 4.1]{ceballos_invariant_2016}.
\[\resizebox{2cm}{!}{$
\begin{array}{ccccccccc}
& & & &0& & & &\\[3mm]
& & &0& &0& & &\\[3mm]
& &0& &1& &1& &\\[3mm]
&0& &2& &2& &0&\\[3mm]
& &1& &1& &0& &\\[3mm]
& & &0& &0& & &\\[3mm]
& & & &0& & & &\\[3mm]
\end{array}$}
=
\resizebox{2cm}{!}{$
\begin{array}{ccccccccc}
& & & &1& & & &\\[3mm]
& & &2& &1& & &\\[3mm]
& &2& &4& &2& &\\[3mm]
&1& &5& &5& &1&\\[3mm]
& &2& &4& &2& &\\[3mm]
& & &1& &2& & &\\[3mm]
& & & &1& & & &\\[3mm]
\end{array}$}
-
\resizebox{2cm}{!}{$
\begin{array}{ccccccccc}
& & & &1& & & &\\[3mm]
& & &2& &1& & &\\[3mm]
& &2& &3& &1& &\\[3mm]
&1& &3& &3& &1&\\[3mm]
& &1& &3& &2& &\\[3mm]
& & &1& &2& & &\\[3mm]
& & & &1& & & &\\[3mm]
\end{array}$}
\]
  
$A\tilde{d}-B\tilde{d}+2\tilde{e}= h(I_{i}-I_{ii})$ is realized by the difference of a type $(i)$ and a type $(ii)$ deformation of the Iwasawa manifold, hence $A\tilde{d}\in \cI$, see again \cite{nakamura_complex_1975}.
\[
\resizebox{2cm}{!}{$
	\begin{array}{ccccccccc}
	& & & &0& & & &\\[3mm]
	& & &0& &1& & &\\[3mm]
	& &0& &2& &0& &\\[3mm]
	&0& &1& &1& &0&\\[3mm]
	& &0& &2& &0& &\\[3mm]
	& & &1& &0& & &\\[3mm]
	& & & &0& & & &\\[3mm]
	\end{array}$}
-
\resizebox{2cm}{!}{$
	\begin{array}{ccccccccc}
	& & & &0& & & &\\[3mm]
	& & &0& &0& & &\\[3mm]
	& &0& &1& &1& &\\[3mm]
	&0& &2& &2& &0&\\[3mm]
	& &1& &1& &0& &\\[3mm]
	& & &0& &0& & &\\[3mm]
	& & & &0& & & &\\[3mm]
	\end{array}$}
+ 2 \cdot
\resizebox{2cm}{!}{$
	\begin{array}{ccccccccc}
	& & & &0& & & &\\[3mm]
	& & &0& &0& & &\\[3mm]
	& &0& &0& &1& &\\[3mm]
	&0& &1& &1& &0&\\[3mm]
	& &1& &0& &0& &\\[3mm]
	& & &0& &0& & &\\[3mm]
	& & & &0& & & &\\[3mm]
	\end{array}$}
=
\resizebox{2cm}{!}{$
\begin{array}{ccccccccc}
& & & &1& & & &\\[3mm]
& & &2& &3& & &\\[3mm]
& &2& &6& &3& &\\[3mm]
&1& &6& &6& &1&\\[3mm]
& &3& &6& &2& &\\[3mm]
& & &3& &2& & &\\[3mm]
& & & &1& & & &\\[3mm]
\end{array}$}
-
\resizebox{2cm}{!}{$
	\begin{array}{ccccccccc}
	& & & &1& & & &\\[3mm]
	& & &2& &2& & &\\[3mm]
	& &2& &5& &2& &\\[3mm]
	&1& &5& &5& &1&\\[3mm]
	& &2& &5& &2& &\\[3mm]
	& & &2& &2& & &\\[3mm]
	& & & &1& & & &\\[3mm]
	\end{array}$}
\]

$BT=AS+2B\tilde{d}-4\tilde{e}$, so it is redundant.
\[\resizebox{2cm}{!}{$
	\begin{array}{ccccccccc}
	& & & &0& & & &\\[3mm]
	& & &0& &0& & &\\[3mm]
	& &1& &0& &-1& &\\[3mm]
	&0& &0& &0& &0&\\[3mm]
	& &-1& &0& &1& &\\[3mm]
	& & &0& &0& & &\\[3mm]
	& & & &0& & & &\\[3mm]
	\end{array}$}=
\resizebox{2cm}{!}{$
	\begin{array}{ccccccccc}
	& & & &0& & & &\\[3mm]
	& & &0& &0& & &\\[3mm]
	& &1& &-2& &1& &\\[3mm]
	&0& &0& &0& &0&\\[3mm]
	& &1& &-2& &1& &\\[3mm]
	& & &0& &0& & &\\[3mm]
	& & & &0& & & &\\[3mm]
	\end{array}$}
+2\cdot 
\resizebox{2cm}{!}{$
	\begin{array}{ccccccccc}
	& & & &0& & & &\\[3mm]
	& & &0& &0& & &\\[3mm]
	& &0& &1& &1& &\\[3mm]
	&0& &2& &2& &0&\\[3mm]
	& &1& &1& &0& &\\[3mm]
	& & &0& &0& & &\\[3mm]
	& & & &0& & & &\\[3mm]
	\end{array}$}
-4\cdot 
\resizebox{2cm}{!}{$
	\begin{array}{ccccccccc}
	& & & &0& & & &\\[3mm]
	& & &0& &0& & &\\[3mm]
	& &0& &0& &1& &\\[3mm]
	&0& &1& &1& &0&\\[3mm]
	& &1& &0& &0& &\\[3mm]
	& & &0& &0& & &\\[3mm]
	& & & &0& & & &\\[3mm]
	\end{array}$}
\]

$AT+A\tilde{d}-2B\tilde{d}+3\tilde{e}+BT=h(X-Y)$ where, in the notation of \cite{ceballos_invariant_2016}, $X$ and $Y$ are nilmanifolds associated with $\mathfrak{h}_{15}$ and the complex structures correspond to parameter values $(\rho,B,c)=(0,1,c)$ with $c\neq 1$ and $(\rho,B,c)=(1,B,c)$ with $B\neq 0\neq c$. Hence, $AT\in\cI$.
\begin{align*}
\resizebox{2cm}{!}{$
	\begin{array}{ccccccccc}
	& & & &0& & & &\\[3mm]
	& & &1& &-1& & &\\[3mm]
	& &0& &0& &0& &\\[3mm]
	&0& &0& &0& &0&\\[3mm]
	& &0& &0& &0& &\\[3mm]
	& & &-1& &1& & &\\[3mm]
	& & & &0& & & &\\[3mm]
	\end{array}$}
&+
\resizebox{2cm}{!}{$
	\begin{array}{ccccccccc}
	& & & &0& & & &\\[3mm]
	& & &0& &1& & &\\[3mm]
	& &0& &2& &0& &\\[3mm]
	&0& &1& &1& &0&\\[3mm]
	& &0& &2& &0& &\\[3mm]
	& & &1& &0& & &\\[3mm]
	& & & &0& & & &\\[3mm]
	\end{array}$}
-2\cdot
\resizebox{2cm}{!}{$
	\begin{array}{ccccccccc}
	& & & &0& & & &\\[3mm]
	& & &0& &0& & &\\[3mm]
	& &0& &1& &1& &\\[3mm]
	&0& &2& &2& &0&\\[3mm]
	& &1& &1& &0& &\\[3mm]
	& & &0& &0& & &\\[3mm]
	& & & &0& & & &\\[3mm]
	\end{array}$}
+3\cdot 
\resizebox{2cm}{!}{$
	\begin{array}{ccccccccc}
	& & & &0& & & &\\[3mm]
	& & &0& &0& & &\\[3mm]
	& &0& &0& &1& &\\[3mm]
	&0& &1& &1& &0&\\[3mm]
	& &1& &0& &0& &\\[3mm]
	& & &0& &0& & &\\[3mm]
	& & & &0& & & &\\[3mm]
\end{array}$}
+
\resizebox{2cm}{!}{$
\begin{array}{ccccccccc}
& & & &0& & & &\\[3mm]
& & &0& &0& & &\\[3mm]
& &1& &0& &-1& &\\[3mm]
&0& &0& &0& &0&\\[3mm]
& &-1& &0& &1& &\\[3mm]
& & &0& &0& & &\\[3mm]
& & & &0& & & &\\[3mm]
\end{array}$}\\
&=
\resizebox{2cm}{!}{$
\begin{array}{ccccccccc}
& & & &1& & & &\\[3mm]
& & &1& &3& & &\\[3mm]
& &1& &3& &3& &\\[3mm]
&1& &3& &3& &1&\\[3mm]
& &1& &3& &1& &\\[3mm]
& & &1& &1& & &\\[3mm]
& & & &1& & & &\\[3mm]
\end{array}$}
-
\resizebox{2cm}{!}{$
\begin{array}{ccccccccc}
& & & &1& & & &\\[3mm]
& & &2& &1& & &\\[3mm]
& &2& &3& &1& &\\[3mm]
&1& &3& &3& &1&\\[3mm]
& &1& &3& &2& &\\[3mm]
& & &1& &2& & &\\[3mm]
& & & &1& & & &\\[3mm]
\end{array}$}
\end{align*}

$R=2\tilde{e}-B\tilde{d}$, so $R$ is redundant.
\[
\resizebox{2cm}{!}{$
	\begin{array}{ccccccccc}
	& & & &0& & & &\\[3mm]
	& & &0& &0& & &\\[3mm]
	& &0& &-1& &1& &\\[3mm]
	&0& &0& &0& &0&\\[3mm]
	& &1& &-1& &0& &\\[3mm]
	& & &0& &0& & &\\[3mm]
	& & & &0& & & &\\[3mm]
	\end{array}$}
=2\cdot
\resizebox{2cm}{!}{$
	\begin{array}{ccccccccc}
	& & & &0& & & &\\[3mm]
	& & &0& &0& & &\\[3mm]
	& &0& &0& &1& &\\[3mm]
	&0& &1& &1& &0&\\[3mm]
	& &1& &0& &0& &\\[3mm]
	& & &0& &0& & &\\[3mm]
	& & & &0& & & &\\[3mm]
	\end{array}$}
-
\resizebox{2cm}{!}{$
	\begin{array}{ccccccccc}
	& & & &0& & & &\\[3mm]
	& & &0& &0& & &\\[3mm]
	& &0& &1& &1& &\\[3mm]
	&0& &2& &2& &0&\\[3mm]
	& &1& &1& &0& &\\[3mm]
	& & &0& &0& & &\\[3mm]
	& & & &0& & & &\\[3mm]
	\end{array}$}
\]

\textbf{Degree 4:} It remains to show that $C\tilde{d}, D\tilde{d}, C T, DT$ and $Q$ lie in $\cI$. Since we work rationally, it will turn out we do not need new geometric generators.\medskip

Since $Q=A(BD-2AC)-C(B^2-4C)=AR-CS$ in $\cH_\ast$ and $R,CS\in \cI$, we have $Q\in \cI$.\medskip

$DT=2CS-AR\in \cI$.

\[
\resizebox{2.5cm}{!}{$
	\begin{array}{ccccccccccc}
	& & & & &0& & & & &\\[3mm]
	& & & &0& &0& & & &\\[3mm]
	& & &0& &1& &-1& & &\\[3mm]
	& &0& &0& &0& &0& &\\[3mm]
	&0& &1& &-2& &1& &0&\\[3mm]
	& &0& &0& &0& &0& &\\[3mm]
	& & &-1& &1& &0& & &\\[3mm]
	& & & &0& &0& & & &\\[3mm]
	& & & & &0& & & & &\\[3mm]
	\end{array}$}
=
2\cdot \resizebox{2.5cm}{!}{$
	\begin{array}{ccccccccccc}
	& & & & &0& & & & &\\[3mm]
	& & & &0& &0& & & &\\[3mm]
	& & &0& &0& &0& & &\\[3mm]
	& &0& &0& &0& &0& &\\[3mm]
	&0& &1& &-2& &1& &0&\\[3mm]
	& &0& &0& &0& &0& &\\[3mm]
	& & &0& &0& &0& & &\\[3mm]
	& & & &0& &0& & & &\\[3mm]
	& & & & &0& & & & &\\[3mm]
	\end{array}$}
- 
\resizebox{2.5cm}{!}{$
	\begin{array}{ccccccccccc}
	& & & & &0& & & & &\\[3mm]
	& & & &0& &0& & & &\\[3mm]
	& & &0& &-1& &1& & &\\[3mm]
	& &0& &0& &0& &0& &\\[3mm]
	&0& &1& &-2& &1& &0&\\[3mm]
	& &0& &0& &0& &0& &\\[3mm]
	& & &1& &-1& &0& & &\\[3mm]
	& & & &0& &0& & & &\\[3mm]
	& & & & &0& & & & &\\[3mm]
	\end{array}$}
\]

$4CT=2DS-ABS+B^2T$, so $CT\in \cI$.
\[
4\cdot\resizebox{2.5cm}{!}{$
	\begin{array}{ccccccccccc}
	& & & & &0& & & & &\\[3mm]
	& & & &0& &0& & & &\\[3mm]
	& & &0& &0& &0& & &\\[3mm]
	& &0& &1& &-1& &0& &\\[3mm]
	&0& &0& &0& &0& &0&\\[3mm]
	& &0& &-1& &1& &0& &\\[3mm]
	& & &0& &0& &0& & &\\[3mm]
	& & & &0& &0& & & &\\[3mm]
	& & & & &0& & & & &\\[3mm]
	\end{array}$}
=
2\cdot \resizebox{2.5cm}{!}{$
	\begin{array}{ccccccccccc}
	& & & & &0& & & & &\\[3mm]
	& & & &0& &0& & & &\\[3mm]
	& & &0& &0& &0& & &\\[3mm]
	& &0& &1& &-2& &1& &\\[3mm]
	&0& &0& &0& &0& &0&\\[3mm]
	& &1& &-2& &1& &0& &\\[3mm]
	& & &0& &0& &0& & &\\[3mm]
	& & & &0& &0& & & &\\[3mm]
	& & & & &0& & & & &\\[3mm]
	\end{array}$}
- 
\resizebox{2.5cm}{!}{$
	\begin{array}{ccccccccccc}
	& & & & &0& & & & &\\[3mm]
	& & & &0& &0& & & &\\[3mm]
	& & &0& &0& &0& & &\\[3mm]
	& &1& &-1& &-1& &1& &\\[3mm]
	&0& &0& &0& &0& &0&\\[3mm]
	& &1& &-1& &-1& &1& &\\[3mm]
	& & &0& &-0& &0& & &\\[3mm]
	& & & &0& &0& & & &\\[3mm]
	& & & & &0& & & & &\\[3mm]
	\end{array}$}
+ 
\resizebox{2.5cm}{!}{$
	\begin{array}{ccccccccccc}
	& & & & &0& & & & &\\[3mm]
	& & & &0& &0& & & &\\[3mm]
	& & &0& &0& &0& & &\\[3mm]
	& &-1& &-1& &1& &1& &\\[3mm]
	&0& &0& &0& &0& &0&\\[3mm]
	& &-1& &-1& &1& &1& &\\[3mm]
	& & &0& &-0& &0& & &\\[3mm]
	& & & &0& &0& & & &\\[3mm]
	& & & & &0& & & & &\\[3mm]
	\end{array}$}
\]

 Finally, since $2Cd=Be$ and $2Dd=ABd$ in $\DR_\ast$, the elements $2C\tilde{d}-B\tilde{e}$ and $2D\tilde{d}-AB\tilde{d}$ lie in the kernel of $s$. As we have already found generators for all of $(\ker s)_{\geq 3}$ and $AB\tilde{d}, B\tilde{e}\in \cI$, we obtain $C\tilde{d}, D\tilde{d}\in \cI$. \end{proof}
\begin{rem} (Integral version)
As the proof shows, the integral version of the theorem is true in dimension $3$ and to make the same proof work for any dimension it would suffice to invert $2$. The general case depends on the geometric realization of the integral generators for $K'$ in dimension $4$, compare Problem \ref{prob: 4-folds for integral statement}. We are optimistic that further progress in the classification of left-invariant complex structures on fixed $8$-dimensional nilpotent Lie-algebras, as begun in \cite{latorre_complex_2020}, combined with a systematic study of the Dolbeault cohomology stratification of their moduli space, will eventually settle this.
\end{rem}
\begin{rem}
The following is another way of showing $AT\in\cI$: Denote by $Z$ the twistor space of a complex $2$-torus $\mathbb T$ equipped with Ricci-flat Kähler metric. There is a diffeomorphism $Z\cong \mathbb \mathbb T\times\CP^1$. However, using the results of \cite{eastwood-singer}, the reader may verify:
\[
2AT+2BT+2\tilde{e}+B\tilde{d}=h(Z-\mathbb T\times\CP^1).
\]
We chose to use nilmanifolds throughout since they have vanishing Chern classes and work for all generators.
\end{rem}

Adding what needs to be added, the rest of the proof of Theorem \ref{thmintro: top inv} proceeds almost literally as in the K\"ahler case. For the reader's convenience, we give the full argument.
\begin{prop}\label{prop: ideal diffeomorphic, Hodge}
	In degrees $\geq 3$, $\cI$ coincides with the ideal generated by differences of diffeomorphic complex manifolds, and $\cI\cap\ker(\sigma)$ coincides with the ideal generated by differences of orientation preservingly homeomorphic (or diffeomorphic) manifolds.
\end{prop}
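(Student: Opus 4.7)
The plan parallels the proof of Theorem \ref{thm: ideal homeomorphic, Hodge}, tracking when the homeomorphisms produced there are actually diffeomorphisms and when they preserve orientation. Denote by $\cI_{\text{diff}}$, $\cI_{\text{oh}}$, $\cI_{\text{od}}$ the ideals in $\HDR_\ast\otimes\Q$ generated by differences of diffeomorphic, orientation-preservingly homeomorphic, resp.\ orientation-preservingly diffeomorphic complex manifolds.

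For the first equality, $\cI_{\text{diff}}\subseteq\cI$ is immediate, and for the reverse I would observe that every realization in the proof of Theorem \ref{thm: ideal homeomorphic, Hodge} is by a diffeomorphism: generators arising from Iwasawa-manifold deformations or from varying invariant complex structures on the $\fh_{15}$-nilmanifold involve the \emph{same} underlying smooth manifold, and the generators involving $S$ were realized via complete intersections that by \cite{kotschick_orientation-reversing_1992} are orientation-reversingly diffeomorphic, not just homeomorphic.

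For the second equality, $\cI_{\text{od}}\subseteq\cI_{\text{oh}}\subseteq\cI\cap\ker\sigma$ is immediate since the signature is an oriented-homotopy invariant, so the main task is to show $\cI\cap\ker\sigma\subseteq\cI_{\text{od}}$. The key geometric input is a product trick: if $f\colon X\to Y$ is an orientation-reversing diffeomorphism between complex manifolds and $M$ admits an orientation-reversing self-diffeomorphism $c\colon M\to M$ (such as $\CP^1$ with complex conjugation or $E$ with $-\mathrm{id}$), then $c\times f\colon M\times X\to M\times Y$ is orientation-preserving. Applied with $M\in\{\CP^1,E\}$, this places $AS,BS\in\cI_{\text{od}}$; combined with the identity-map realizations of $A\tilde d,B\tilde d,\tilde e,AT,BT,R$ on a common smooth nilmanifold, we get $\cI_3=\cI_{\text{od},3}$, and since every degree-3 generator of $\cI$ is individually in $\ker\sigma$, the case $n=3$ is settled.

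For $n\geq 4$ I would pass to the quotient and show $\sigma$ induces an injection $(\cI/\cI_{\text{od}})\otimes\Q\hookrightarrow\Q[z]$. The pivotal identity is $\tilde d=D+2C$ in $\cH\otimes\Q$, which yields $\tilde d\cdot S=2CS+DS$; since $A\tilde d,B\tilde d,C\tilde d,D\tilde d\in\cI_{\text{od}}$ (the first two directly, the latter two via the algebraic reductions $2C\tilde d-B\tilde e\in\ker s$ and $2D\tilde d-AB\tilde d\in\ker s$ from the proof of Theorem \ref{thm: ideal homeomorphic, Hodge}), one has $\tilde d\cdot\cH_{\geq 1}\otimes\Q\subseteq\cI_{\text{od}}$ and in particular $2CS+DS\in\cI_{\text{od}}$. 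Combined with the identities $Q=CS-AR$, $DT=2CS-AR$, $4CT=2DS-ABS+B^2T$ and $S+T=B(A+B)-2\tilde d$ from the proof of Theorem \ref{thm: ideal homeomorphic, Hodge}, this collapses $(\cI/\cI_{\text{od}})_n\otimes\Q$ for even $n\geq4$ onto the one-dimensional span of $[SC^{(n-2)/2}]$, on which $\sigma$ is nonzero. For odd $n$, a parity argument in $\cH/(A,B)\otimes\Q$ shows directly that $\cI_n\subseteq\cI_{\text{od}}$. The main obstacle I expect is the bookkeeping of these algebraic identities in each degree; but once one computes $\ker\sigma=(A,B,\tilde d)$ in $\cH\otimes\Q$ from the defining relation $G$, the reduction is uniform in $n$.
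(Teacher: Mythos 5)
Your proof has a genuine gap at its geometric core: you assert that the complete intersection pairs $X_i,Y_i$ coming from \cite{kotschick_orientation-reversing_1992} are orientation-reversingly \emph{diffeomorphic}. They are not -- those results (used via the proof of \cite[Thm.~10]{kotschick_hodge_2013}) only provide orientation-reversing \emph{homeomorphisms} of $4$-manifolds, and in real dimension $4$ one cannot upgrade a homeomorphism to a diffeomorphism. This is precisely the point where the proposition needs input beyond Theorem \ref{thm: ideal homeomorphic, Hodge}: one first observes that $X_i$ and $Y_i$ are $h$-cobordant by Wall \cite{wall_simply-connected_1964}, and then obtains diffeomorphisms only after multiplying by a further factor, via the $h$-cobordism theorem \cite{smale_structure_1962} for $\CP^k\times X_i\cong \CP^k\times Y_i$ and via Kervaire's lemma \cite{kervaire_theoreme_1965} for products with $E\cong S^1\times S^1$ and $H\cong S^1\times S^3$ (these factors are not simply connected, so the $h$-cobordism theorem alone does not apply -- a case your plan never addresses). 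Without this step, neither your argument for the first equality nor your claim $AS,BS\in\cI_{\mathrm{od}}$ is justified: your product trick $c\times f$ requires an orientation-reversing diffeomorphism $f$ of the surface factor, which is exactly what is unavailable; the correct version applies the orientation-reversing self-diffeomorphism of $\CP^1$, $E$ or $H$ to fix the orientation of the diffeomorphism of the \emph{product} obtained from the $h$-cobordism argument.

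The algebraic half of your oriented argument also contains an unproved step. You claim $C\tilde{d},D\tilde{d}\in\cI_{\mathrm{od}}$ ``via the reductions $2C\tilde{d}-B\tilde{e}\in\ker s$ and $2D\tilde{d}-AB\tilde{d}\in\ker s$'', but in the paper these reductions only yield membership in $\cI$: the degree-$4$ part of $\ker s$ is spanned using $CS$ and $DS$, which have nonzero signature and therefore cannot lie in $\cI_{\mathrm{od}}$, so you would have to verify that the specific elements decompose using only generators already known to be differences of orientation-preservingly diffeomorphic manifolds. (Your computations $\ker\sigma=(A,B,\tilde{d})$ and $\tilde{d}=D+2C$ are correct, and the strategy of showing that $\sigma$ is injective on $\cI/\cI_{\mathrm{od}}$ would indeed suffice -- but as written both the even-degree collapse onto $[SC^{(n-2)/2}]$ and the odd-degree ``parity argument'' remain assertions resting on the two unestablished memberships above.) The paper avoids this bookkeeping by noting that all generators of $\cI_{\geq 3}$ except the products of $X_i-Y_i$ with pure powers of $\CP^2$ have vanishing signature, so that $\cI_{\geq 3}\cap\ker\sigma$ is generated by the nilmanifold differences together with products of $X_i-Y_i$ with monomials containing at least one factor $\CP^1$, $E$ or $H$, each of which is then made orientation-preservingly diffeomorphic as described above.
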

\begin{proof}
	As we have seen in the proof of Theorem \ref{thm: ideal homeomorphic, Hodge}, except for the multiples of $S$, all non-redundant generators of $\cI_{\geq 3}$ are given by differences of two copies of the same nilmanifold equipped with different left-invariant complex structures, which are trivially (even orientation-preservingly) isomorphic. For multiples of $S$, recall that $\cH_\ast$ can be generated by $\CP^1,\CP^2$ an elliptic curve $E$ and a Hopf surface $H$.  By a result of Wall \cite{wall_simply-connected_1964}, the manifolds $X_i$ and $Y_i$ are $h$-cobordant. Thus, by the $h$-cobordism theorem \cite{smale_structure_1962}, $\CP^k\times X_i$ and $\CP^k\times Y_i$ are diffeomorphic. Products with $E\cong S^1\times S^1$ and $H\cong S^1\times S^3$ are handled by the following lemma from \cite{kervaire_theoreme_1965}:
	\begin{lem}\label{lem: s-cobordism circle}
	Let $M,N$ be $h$-cobordant manifolds of dimension $\geq 5$. Then $M\times S^1$ and $N\times S^1$ are diffeomorphic.
	\end{lem}
	To take into account orientations, we first note that all generators of $\cI_{\geq 3}$, except products of the differences of orientation reversingly homeomorphic complete intersections $X_i-Y_i$ with pure powers of $\CP^2$, have vanishing signature, so that $\cI_{\geq 3}\cap\ker\sigma$ is generated by products of $X_i-Y_i$ with monomials containing at least one factor of $\CP^1$, $E$ and $H$ and those generators coming from differences of nilmanifolds. As already remarked above, the latter are differences of orientation preservingly diffeomorphic manifolds. On the other hand, since $\CP^1, E$ and $H$ admit orientation reversing self-diffeomorphisms, we may deduce that also products of $X_i$ and $Y_i$ with any of these three manifolds are orientation preservingly diffeomorphic.\end{proof}

\begin{prop}\label{prop: ideal homeo diffeo, chern}
	Let $\cJ\subseteq \cH\cC_\ast\otimes\Q$ be the ideal generated by differences of homeomorphic complex manifolds. In degrees $\geq 3$, it coincides with the ideal generated by differences of diffeomorphic complex manifolds and with the kernel $K$ of the composition
	\[
	\cH\cC_\ast\otimes\Q\overset{(\chi,h^{0,0})}{\longrightarrow}\Q[z]\times \Q[z].
	\]
\end{prop}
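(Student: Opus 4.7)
The plan is to parallel the proof of Proposition \ref{prop: ideal diffeomorphic, Hodge} by splitting the argument into a Hodge step and a Chern step, invoking the Kähler analogue from \cite[Thm.~10]{kotschick_hodge_2013} for the latter. The sandwich
\[
\langle [X]-[Y] \mid X \text{ diffeomorphic to } Y\rangle \ \subseteq\ \cJ \ \subseteq\ K
\]
is immediate since diffeomorphism implies homeomorphism and both $\chi$ and $h^{0,0}=b_0$ are homotopy invariants. The substance of the proposition lies in the reverse inclusion $K \subseteq \langle \text{diffeomorphic differences}\rangle$ in degrees $\geq 3$.

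For this, take $\alpha=(a,b)\in K_n\otimes\Q$ with $n\geq 3$. Projecting to the Hodge ring via $\pr:\cH\cC_\ast\to\cH_\ast$, the image $a$ lies in $\cI_n\otimes\Q$ by Theorem \ref{thm: ideal homeomorphic, Hodge}, so Proposition \ref{prop: ideal diffeomorphic, Hodge} expresses $a$ as a $\Q$-linear combination $\sum_i q_i(h(X_i)-h(Y_i))$ with $X_i$ and $Y_i$ diffeomorphic. The canonical lift $\tilde\alpha:=\sum_i q_i([X_i]-[Y_i]) \in \cH\cC_\ast\otimes\Q$ lies in the diffeomorphic ideal and satisfies $\pr(\tilde\alpha)=a$, so the residual $(0,\gamma):=\alpha-\tilde\alpha$ has vanishing Hodge part and, by the defining relation $\chi_\ast=td_\ast$ on $\cH\cC_\ast$ (Remark \ref{rem: HC diagonal}), must satisfy $td_\ast(\gamma)=0$.

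It thus remains to show that any element of the form $(0,\gamma)\in K_n$ with $\gamma\in\ker td_\ast\otimes\Q$ lies in the diffeomorphic ideal, i.e.\ to realize $\gamma$ as a sum of differences $[U_j]-[V_j]$ for diffeomorphic complex manifolds $U_j,V_j$ with $h(U_j)=h(V_j)$. This is the Chern half of the Kähler result \cite[Thm.~10]{kotschick_hodge_2013}: the required pairs arise from simply connected complete intersections equipped with alternate complex structures, upgraded from $h$-cobordism to diffeomorphism after stabilization by $\CP^k$ via Wall's theorem \cite{wall_simply-connected_1964} and Smale's $h$-cobordism theorem \cite{smale_structure_1962}, with $S^1$-factors handled by Lemma \ref{lem: s-cobordism circle}. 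Since these examples are Kähler and hence complex, they transplant verbatim to our broader setting.

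The main obstacle is precisely this Chern realization step: diffeomorphism does not preserve $h^{p,q}$ off the diagonal, so producing diffeomorphic pairs with simultaneously identical Hodge polynomials and a prescribed Chern discrepancy is a genuine constraint. The $h$-cobordism machinery imported from \cite{kotschick_hodge_2013} resolves exactly this; combining it with the Hodge reduction in the second paragraph then yields the proposition.
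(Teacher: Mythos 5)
Your decomposition strategy (subtract off a lift of the Hodge part, then handle the residual $(0,\gamma)$ with $td_\ast(\gamma)=0$) is a valid alternative to the paper's argument, which instead picks a specific generating set $E$, $H$, $\beta_1,\beta_2,\dots$ of $\cH\cC_\ast\otimes\Q$ and computes the kernel of $(\chi,h^{0,0})$ in those coordinates. Your first two paragraphs are sound, modulo a small bookkeeping point: the ideal-generators of $\cI_n$ are $\cH_\ast$-linear, not $\Q$-linear, combinations of the $h(X_i)-h(Y_i)$, so the coefficients $q_i$ should be elements of $\cH\cC_\ast\otimes\Q$ rather than scalars; this does not affect the strategy, since the coefficients lift.

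The genuine gap is in the last step. You assert that every $(0,\gamma)\in\cH\cC_n\otimes\Q$ with $\gamma\in\ker td_\ast$ and $n\geq 3$ lies in the diffeomorphic ideal, and attribute this to the ``Chern half'' of \cite[Thm.~10]{kotschick_hodge_2013} via complete intersections and the $h$-cobordism theorem. But the complete-intersection pairs of \cite{kotschick_hodge_2013} only supply the single degree-$2$ generator $S$ (and its products with the $\cH_\ast$-generators); they do not, on their own, realize the full kernel of $td_\ast$ in arbitrary degree. The paper's proof closes exactly this gap by importing \cite[Thm.~10]{kotschick_topologically_2012} (a different paper and a different theorem), which produces a basis sequence $\beta_1=\CP^1,\beta_2,\dots$ for $\Omega_\ast^U\otimes\Q$ with $\beta_i\in\cJ$ for $i\geq 2$; your proposal never invokes this. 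A cleaner way to repair your route is to observe that $(0,\gamma)$ with $td_\ast(\gamma)=0$ automatically lies in the K\"ahler subring $\cH\cC_\ast^K\otimes\Q$, and then cite the K\"ahler analogue of the present proposition as a black box, but as written the argument neither does this nor supplies the bordism-generator input, so the final inclusion is unproven.
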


\begin{proof}
Let $\beta_1=\CP^1,\beta_2,...\beta_i,...$ be a generating sequence for $\Omega_\ast^U\otimes \Q$ by compact complex manifolds. Note that we do not assume that $\beta_i=\CP^i$ for $i\geq 2$. Given any element $(h(X),[X])\in\cH\cC_n$, write $[X]=[P]$ for some polynomial $P$ in $\beta_1,...,\beta_n$. Then $(h(X)-h(P),0)\in\cH\cC_n$ and $h(X-P)$ is in the kernel of the Hirzebruch genus, which may be generated by an elliptic curve $E$ and a Hopf surface $H$, which map to zero in $\Omega_\ast^U$. From this, we see that $\cH\cC_\ast\otimes\Q$ is generated by an elliptic curve $E$, a Hopf surface $H$ and the $\beta_i$, i.e. there is a commutative diagram
\[
\begin{tikzcd}
\Q[E,H,\beta_1,\beta_2,...]\ar[rd]\ar[d]&\\
\cH\cC_\ast\otimes\Q\ar[r]&\Q[z]\times\Q[z].
\end{tikzcd}
\]

Clearly, $\cJ\subseteq K$. On the other hand, by \cite[Thm. 10]{kotschick_topologically_2012}, it is possible to choose a basis sequence $\beta_1=\CP^1,\beta_2,...$ such that $\beta_i\in\cJ$ for $i\geq 2$. For such a basis sequence, the above diagram may be reduced to
\[
\begin{tikzcd}
\Q[E,H,\beta_1]\ar[rd]\ar[d]&\\
\cH\cC_\ast\otimes\Q/(\beta_i)_{i\geq 2}\ar[r]&\Q[z]\times\Q[z].
\end{tikzcd}
\]
Write $B=\beta_1-E$ and $H'=E^2-H$. Then, since $\chi(B)=2$, $h^{0,0}(B)=0$ and $\chi(E)=0$, $h^{0,0}(E)=1$, $\chi(H')=0=h^{0,0}(H')$, the surjective map
$\Q[E,H',B]=\Q[E,H,\beta_1]\to\Q[z]\times\Q[z]$ has kernel generated by $EB$ and $H'$. Thus $K=\langle EB,H',\beta_2,\beta_3,...\rangle$. Since both $EB$ and $H'$ have vanishing Chern classes, their images in $\cH\cC_\ast\otimes\Q$ lie entirely in the ideal of elements of the form $(a,0)\in \cH\cC_\ast\subseteq\cH_\ast\times\cC_\ast$, which, by Remark \ref{rem: HC diagonal}, is identified with the kernel of the Hirzebruch genus $\chi_\ast:\cH_\ast\otimes\Q\to\cH ir_\ast\otimes\Q$. In particular, they lie in the kernel of $\chi:\cH_\ast\otimes\Q\to\Q[z]$. Since they also lie in the kernel of $h^{0,0}$, by Theorem \ref{thm: ideal homeomorphic, Hodge}, all multiples in degrees $\geq 3$ lie in $\cJ$. This completes the proof of $K_{\geq 3}=\cJ_{\geq 3}$.

To take into account diffeomorphisms instead of homeomorphisms, note that the generators $\beta_i$ constructed in \cite{kotschick_topologically_2012} are in fact differences of diffeomorphic projective varieties as soon as $i\geq 3$, and the same holds for $\beta_1\cdot\beta_2$ and $\beta_2\cdot\beta_2$. Since $\beta_2$ is a difference of orientation-reversingly homeomorphic simply connected manifolds, it follows as in the proof of Proposition \ref{prop: ideal diffeomorphic, Hodge} from the $h$-cobordism theorem and Lemma \ref{lem: s-cobordism circle} that $E\times \beta_2$ and $H\times\beta_2$ are differences of diffeomorphic manifolds. As we have seen, all other generators for $\cJ_{\geq 3}$ may be taken to be differences of diffeomorphic nilmanifolds with left invariant complex structures.
\end{proof}

\begin{prop}\label{prop: ideal oriented homeo diffeo, chern}
Let $\cJ\Oh\subseteq\cH\cC_\ast\otimes\Q$ be the ideal generated by differences of orientation preservingly homeomorphic manifolds. In degrees $\geq 3$, it coincides with the ideal generated by differences of orientation preservingly diffeomorphic manifolds and with the kernel $K$ of the map
\[
\cH\cC_\ast\otimes\Q\longrightarrow\Q[z]\times\Q[z]\times\Omega_{2\ast}^{SO}\otimes\Q,
\]
given by $(\chi,h^{0,0})$ in the first two components and the forgetful map in the third.
\end{prop}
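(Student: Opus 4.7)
The strategy mirrors that of Proposition \ref{prop: ideal homeo diffeo, chern}, with the additional task of tracking the rational oriented bordism class. Let $B$ denote the ideal of orientation-preservingly diffeomorphic differences. The chain $B \subseteq \cJ\Oh \subseteq K$ is immediate: the first inclusion uses that diffeomorphisms are homeomorphisms, and the second uses topological invariance of $\chi$ and $h^{0,0}$ together with Novikov's theorem (rational Pontryagin classes are topological invariants, and $\Omega_\ast^{SO}\otimes\Q$ is a polynomial algebra detected by Pontryagin numbers).

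For the reverse inclusion $K \subseteq B$ in degrees $\geq 3$, I would mimic Proposition \ref{prop: ideal homeo diffeo, chern}: choose a basis sequence $\beta_1 = \CP^1, \beta_2, \beta_3, \ldots$ of $\Omega_\ast^U\otimes\Q$ with $\beta_i \in \cJ$ for $i \geq 2$ and $\beta_i$ a difference of diffeomorphic projective varieties for $i \geq 3$, as in \cite{kotschick_topologically_2012}. The new input needed is that for $i$ odd and $\geq 3$, where $\Omega_{2i}^{SO}\otimes\Q$ vanishes, the $\beta_i$ can be arranged to lie in $B$ (the orientation-reversing ambiguity in Kotschick's construction carries no obstruction in these dimensions); for $i$ even, $\beta_i$ maps to a polynomial generator of $\Omega_{2\ast}^{SO}\otimes\Q$, yielding an isomorphism $\Q[\beta_2, \beta_4, \ldots] \xrightarrow{\sim} \Omega_{2\ast}^{SO}\otimes\Q$.

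Given $f \in K_{\geq 3}$, write it as a polynomial in $E, H, \beta_1, \beta_2, \ldots$. Modulo $B$, the $\beta_i$ with $i$ odd $\geq 3$ drop out; the vanishing $\Omega_{2\ast}^{SO}$-image of $f$ then kills the monomials involving only the $\beta_{2k}$'s (by the isomorphism above); and mixed monomials, containing both an $\{E, H, \beta_1\}$-factor and a $\beta_{2k}$-factor, are absorbed into $B$ via the $h$-cobordism and $s$-cobordism argument of Proposition \ref{prop: ideal homeo diffeo, chern}, using the orientation-reversing self-diffeomorphisms of $\CP^1$, $E$ and $H$. This leaves an element $(a, 0) \in \cH\cC_\ast\otimes\Q$ (zero $\Omega^U$-part) satisfying $\chi_\ast(a) = 0$ by Remark \ref{rem: HC diagonal} and $a \in \ker(h^{0,0}, \chi)$, so in particular $\sigma(a) = 0$. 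Such an $a$ corresponds to an element of $\cI \cap \ker \sigma \subseteq \HDR_\ast\otimes\Q$, which Proposition \ref{prop: ideal diffeomorphic, Hodge} realizes as a combination of orientation-preservingly diffeomorphic nilmanifold differences; by Observation \ref{obs: c_i=0} these nilmanifolds have vanishing Chern classes, so the realization descends unchanged to $\cH\cC_\ast\otimes\Q$ and exhibits $(a, 0)$ as an element of $B$.

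The main obstacle I foresee is the oriented strengthening of \cite[Thm. 10]{kotschick_topologically_2012} for odd $i \geq 3$, which needs to be made explicit, and the careful bookkeeping ensuring that each mixed monomial is realizable by an orientation-preserving diffeomorphism rather than merely a homeomorphism; the latter follows the pattern of Proposition \ref{prop: ideal homeo diffeo, chern} (applying the $h$- and $s$-cobordism theorems to products with $\CP^1$, $E$, or $H$) and does not introduce genuinely new difficulties.
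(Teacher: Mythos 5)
Your proposal follows the same route as the paper: the same easy inclusions, the same basis sequence $\beta_1=\CP^1,\beta_2,\dots$ and commutative diagram over $\Q[E,H,\beta_1,\beta_2,\dots]$, the same identification of the kernel generators (odd $\beta_i$, mixed monomials, and the $EB$, $H'$ part), and the same endgame via Proposition \ref{prop: ideal diffeomorphic, Hodge} together with the orientation-reversing self-diffeomorphisms of $\CP^1$, $E$, $H$ and the $h$-/$s$-cobordism arguments. The ingredient you flag as the main obstacle is exactly what the paper supplies by citation: \cite[Thm.~7]{kotschick_topologically_2012} gives basis sequences with $\beta_i,\ \beta_1\cdot\beta_{i-1}\in\cJ\Oh$ for odd $i\geq 3$; your parenthetical justification (that the orientation ambiguity ``carries no obstruction'' because $\Omega^{SO}_{2i}\otimes\Q=0$) is not by itself an argument, since vanishing of rational oriented bordism does not let you upgrade a diffeomorphism of the relevant varieties to an orientation-preserving one, so this step really does rest on Kotschick's theorem rather than on a formal observation. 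One smaller inaccuracy: Proposition \ref{prop: ideal diffeomorphic, Hodge} does \emph{not} realize all of $\cI\cap\ker\sigma$ by nilmanifold differences alone --- the multiples of $S$ are realized by products of the orientation-reversingly homeomorphic complete-intersection pairs with $\CP^1$, $E$ or $H$, which do not have vanishing Chern classes --- so your ``descends unchanged to $\cH\cC_\ast\otimes\Q$ by Observation \ref{obs: c_i=0}'' is not literally available for that part of the realization and needs an extra word about the $\Omega^U_\ast$-component (the paper is comparably terse here, disposing of this step with ``as before''). With the citation in place and that caveat noted, your argument is the paper's argument.
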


\begin{proof}
Clearly, the ideal generated by differences of orientation preservingly diffeomorphic manifolds is contained in $\cJ\Oh$ which is in turn contained in the kernel. For the reverse inclusions, let us use the same basis sequence as in the previous proof and consider the diagram
\[
\begin{tikzcd}
\Q[E,H,\beta_1,\beta_2,...]\ar[rd]\ar[d]&\\
\cH\cC_\ast\otimes\Q\ar[r]&\Q[z]\times\Q[z]\times\Omega_{2\ast}^{SO}\otimes\Q.
\end{tikzcd}
\]
The elements $E,\beta_1$ have trivial Pontryagin numbers and we see, as before, that the subring of polynomials in these two elements surjects onto the first two factors. On the other hand, the polynomials in the $\beta_{2i}$ generate $\Omega_{2\ast}^{SO}\otimes\Q$ and have zero image in the first two factors. Thus, the diagonal map is surjective and its kernel is the ideal generated by the elements $\beta_i$ for $i$ odd, $\beta_1\cdot\beta_j$, $E\cdot \beta_j$, $H\cdot \beta_j$ for $j$ even and $EB, H'$ as above. Using \cite[Thm. 7]{kotschick_topologically_2012} for the $\beta_1\cdot\beta_{i-1},\beta_i\in\cJ\Oh$ for odd $i\geq 3$, and, as before, that $E$ and $H$ admit orientation reversing self-diffeomorphisms, all these elements are representable by differences of orientation preservingly diffeomorphic manifolds.
\end{proof}

\begin{proof}[Proof of Theorem \ref{thmintro: top inv}]
	A rational linear combination of Hodge and Chern numbers of $n$-dimensional manifolds is a linear map $\cH\cC_n\to\Q$. It is an unoriented homeomorphism invariant precisely if it vanishes on $\cJ$. Since by Proposition \ref{prop: ideal homeo diffeo, chern}, for $n\geq 3$ the ideal generated by differences of diffeomorphisms coincides with $\cJ_n$, the linear combinations invariant under homeomorphism are exactly those invariant under diffeomorphism.
	
	Further, it is known that all of the listed quantities in Theorem \ref{thmintro: top inv} are homeomorphism invariants, so it remains to show the converse. By Proposition \ref{prop: ideal homeo diffeo, chern}, any linear map $h:\cH\cC_n\to\Q$, $n\geq 3$, which is a (not necessarily orientation preserving) homeomorphism  invariant (i.e. it vanishes on $\cJ_n$), factors over $(h^{0,0},\chi)$, i.e. it can be written as a linear combination of Euler characteristic and connected components only. 
	
	The proof for the orientation preserving case is the same, using Proposition \ref{prop: ideal oriented homeo diffeo, chern}. The statements involving Betti numbers follow from Theorem \ref{thmintro: HDRC rels}.
\end{proof}

\section{Refined Betti numbers}\label{sec: refined Betti numbers}
The refined Betti numbers were originally introduced in \cite{stelzig_structure_2018} from the point of view of the structure theory of double complexes. We give here a reminder and more elementary treatment, independent of \cite{stelzig_structure_2018}.  The connection with \cite{stelzig_structure_2018} will be picked up again in Section \ref{sec: univ ring}.\medskip

Let $X$ be a compact complex manifold of dimension $n$. The $k$-th complex de Rham cohomology $H_{dR}^k(X)$ is naturally equipped with two filtrations, namely the Hodge filtration and its conjugate:
\begin{align*}
F^p:=F^pH_{dR}^k(X):=\left\{[\omega]~\bigg|~ \omega\in\bigoplus_{\substack{r+s=k\\r\geq p}}A_X^{r,s}\right\}\\
\Fbar^q:=\Fbar^qH_{dR}^k(X):=\left\{[\omega]~\bigg|~ \omega\in\bigoplus_{\substack{r+s=k\\s\geq q}}A_X^{r,s}\right\}
\end{align*}
If $X$ satisfies the $\del\delbar$-Lemma, for example for $X$ K\"ahler, it is a standard result that $F$ and $\Fbar$ induce a pure Hodge structure on the de Rham cohomology, i.e. that
\[
H_{dR}^k(X)=\bigoplus_{p+q=k}F^p\cap \Fbar^q.
\]

For general $X$, this is no longer true. This motivates the following definition:

\begin{definition}
	The \textbf{total filtration} on $H_{dR}^k(X)$ is defined by
	\[
	F_{tot}^l:=F_{tot}^l H_{dR}^k(X):=\sum_{p+q=l} F^p\cap \Fbar^q.
	\]
	Denote by $(H_{dR}^k(X))_l:=\gr_F^lH_{dR}^k(X):=F_{tot}^l / F_{tot}^{l+1}$ the $l$-th associated graded and write 	
	\[
	H_k^{p,q}:=\im\left(F^p\cap \Fbar^q\rightarrow (H_{dR}^k(X))_{p+q}\right)=\frac{F^p\cap\Fbar^q}{F^{p+1}\cap \Fbar^q+F^p\cap\Fbar^{q+1}}.
	\]
\end{definition}

\begin{rem}
	The total filtration is invariant under conjugation and may therefore also be considered as a filtration on the real cohomology. 
\end{rem}
\begin{rem}
	In the following, we will mainly care about the spaces $H_k^{p,q}$. The reader preferring a quicker but less symmetric definition may verify the existence of a natural isomorphism $H_k^{p,q}\cong \gr_F^p\gr_{\Fbar}^q H_{dR}^k(X)$.
\end{rem}

By definition, we obtain:
\begin{lem}[Weak Hodge decomposition]
	\[
	(H_{dR}^k(X))_l=\bigoplus_{p+q=l}H_k^{p,q}
	\]
\end{lem}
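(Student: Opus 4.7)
The plan is to construct the natural map $\Phi: \bigoplus_{p+q=l} H_k^{p,q} \to (H_{dR}^k(X))_l$ and show it is an isomorphism. On each summand, $\Phi$ sends the class of $\alpha \in F^p \cap \Fbar^q$ to its image in $F_{tot}^l/F_{tot}^{l+1}$; this is well-defined since both $F^{p+1}\cap \Fbar^q$ and $F^p\cap \Fbar^{q+1}$ lie in $F_{tot}^{l+1}$ when $p+q=l$. Surjectivity is immediate from the definition $F_{tot}^l=\sum_{p+q=l} F^p\cap \Fbar^q$: any representative in $F_{tot}^l$ can be written as $\sum \alpha_{p,q}$ with $\alpha_{p,q}\in F^p\cap \Fbar^q$.

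For injectivity, I would argue as follows. Suppose $\sum_{p+q=l}\alpha_{p,q}\in F_{tot}^{l+1}$ with $\alpha_{p,q}\in F^p\cap\Fbar^q$, and fix a target index $(p_0,q_0)$ with $p_0+q_0=l$. The goal is to show $\alpha_{p_0,q_0}\in F^{p_0+1}\cap \Fbar^{q_0}+F^{p_0}\cap \Fbar^{q_0+1}$. The key observation is the filtration inclusion
\[
F_{tot}^{l+1}\subseteq F^{p_0+1}+\Fbar^{q_0+1},
\]
which holds because for any $(r,s)$ with $r+s=l+1=p_0+q_0+1$ at least one of $r\geq p_0+1$ or $s\geq q_0+1$ must hold, so $F^r\cap\Fbar^s$ lands in $F^{p_0+1}$ or in $\Fbar^{q_0+1}$ accordingly. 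The same dichotomy classifies the remaining summands $\alpha_{p,l-p}$ with $p\neq p_0$: those with $p>p_0$ lie in $F^{p_0+1}$, those with $p<p_0$ lie in $\Fbar^{q_0+1}$. Isolating $\alpha_{p_0,q_0}$ in the defining relation therefore expresses it as $X+Y$ with $X\in F^{p_0+1}$ and $Y\in\Fbar^{q_0+1}$.

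The final step is to upgrade this splitting to the bifiltered statement. Since $\alpha_{p_0,q_0}\in \Fbar^{q_0}$ and $Y\in \Fbar^{q_0+1}\subseteq\Fbar^{q_0}$, one gets $X=\alpha_{p_0,q_0}-Y\in \Fbar^{q_0}$, hence $X\in F^{p_0+1}\cap\Fbar^{q_0}$; symmetrically, $Y\in F^{p_0}\cap\Fbar^{q_0+1}$. This shows $\alpha_{p_0,q_0}$ represents $0$ in $H_k^{p_0,q_0}$, completing the proof. I expect this splitting lemma to be the only nontrivial part of the argument, everything else being formal bookkeeping with the two filtrations. It is worth noting that the statement is purely linear-algebraic: no analytic input such as a $\partial\bar\partial$-lemma is required, which is consistent with the weak Hodge decomposition holding for arbitrary compact complex manifolds.
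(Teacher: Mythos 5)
Your proof is correct, and the paper itself offers no argument here: the lemma is stated as following ``by definition,'' so your filtration dichotomy (for $r+s=l+1$, either $r\ge p_0+1$ or $s\ge q_0+1$, and likewise for the remaining summands $\alpha_{p,l-p}$) is exactly the routine linear-algebra verification being left implicit. Note that the same argument also justifies the second equality in the paper's definition of $H_k^{p,q}$, namely that the kernel of $F^{p_0}\cap\Fbar^{q_0}\to (H_{dR}^k(X))_l$ is precisely $F^{p_0+1}\cap\Fbar^{q_0}+F^{p_0}\cap\Fbar^{q_0+1}$, which you prove along the way by upgrading the splitting $\alpha_{p_0,q_0}=X+Y$ to the bifiltered one.
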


The spaces occurring in this decomposition for $p+q\neq k$ therefore measure the defect of purity. 
\begin{definition}
The numbers \[
b_k^{p,q}:=b_k^{p,q}(X):=\dim H_k^{p,q}(X)
\]
are called the \textbf{refined Betti numbers} of $X$.
\end{definition}

Since an associated graded of a filtered vector space has the same dimension as the original one, we obtain $b_k=\sum_{p,q\in\Z}b_k^{p,q}$, explaining the name. We note that unlike the Betti numbers, their refined counterparts are sensitive to the complex structure. By construction, $b_k=\sum_{p+q=k} b_k^{p,q}$ if and only if $F$ and $\Fbar$ induce a pure Hodge structure. Outside certain triple degrees, the $b_k^{p,q}$ have to vanish (see Proposition \ref{prop: obvious linear relations between refined Betti numbers} below). One may picture the $b_k^{p,q}$ as giving a three-dimensional analogue of the Hodge diamond, or equivalenty a diamond for each $k$, e.g. for $n=2$:
\tiny
\[
\begin{array}{ccccc}
\begin{array}{c}
b_0^{0,0}
\end{array}&

\begin{array}{ccc}
&b_1^{1,1}&\\[3mm]
b_1^{1,0}&&b_1^{0,1}\\[3mm]
&b_1^{0,0}&
\end{array}&
\begin{array}{ccccc}
&&b_2^{2,2}&&\\[3mm]
&b_2^{2,1}&&b_2^{1,2}&\\[3mm]
b_2^{2,0}&&b_2^{1,1}&&b_2^{0,2}\\[3mm]
&b_2^{1,0}&&b_2^{0,1}&\\[3mm]
&&b_2^{0,0}&&
\end{array}
&
\begin{array}{ccc}
&b_1^{2,2}&\\[3mm]
b_3^{2,1}&&b_3^{1,2}\\[3mm]
&b_3^{1,1}&
\end{array}
&
\begin{array}{c}
b_4^{2,2}
\end{array}
\end{array}
\]
\normalsize

For K\"ahler (or $\del\delbar$-)manifolds, there is an isomorphism $H_{\delbar}^{p,q}(X)\cong H_{p+q}^{p,q}(X)$, so the refined Betti numbers are a different generalisation of the Hodge numbers on K\"ahler manifolds.\medskip

For an explicit understanding of the spaces $H_k^{p,q}$, the following Lemma is useful. It is the general version of the well-known fact that for $X$ K\"ahler (or more generally carrying a pure Hodge structure), the spaces $F^p\cap F^q\subseteq H_{dR}^k(X)$ with $p+q=k$ consist of classes representable by pure-type forms.

\begin{lem}\label{lem: spaces FcapFbar}
	Fix some $k\in\Z_{\geq 0}$. The subspaces $F^p\cap \Fbar^q\subseteq H_{dR}^k(X)$ allow the following explicit description:
	
	If $p+q\geq k$:
	\[
	F^p\cap \Fbar^q=\left\{\parbox{9cm}{classes which admit for any $(r,s)\in\{(k-q,q),...,(p,k-p)\}$ a representative $\omega\in A^{r,s}$.}\right\}
	\]
	If $p+q\leq k$:
	\[
	F^p\cap \Fbar^q=\left\{\parbox{5.5cm}{classes which admit a representative ${\omega=\sum_{j=p}^{k-q}\omega_{j,k-j}}$ with $\omega_{r,s}\in A^{r,s}$.}\right\}
	\]
\end{lem}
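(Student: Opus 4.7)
The plan is to prove both cases by a single zigzag computation at the level of differential forms. The easy direction in both cases is immediate from inspecting bidegrees: a representative of the specified form in Case 1 has all components in bidegrees $(r,s)$ with $r\geq p$ and $s\geq q$, so its class lies in $F^p\cap\Fbar^q$; in Case 2, the hypothesis in particular supplies a pure $(p,k-p)$ representative (placing $[\omega]$ in $F^p$) and a pure $(k-q,q)$ representative (placing $[\omega]$ in $\Fbar^q$).

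For the reverse direction in Case 1, suppose $[\omega]\in F^p\cap\Fbar^q$ with $p+q\leq k$. I would choose representatives $\alpha$ supported in bidegrees with $r\geq p$ and $\beta$ supported in bidegrees with $s\geq q$, and pick $\gamma\in A_X^{k-1}$ with $\alpha-\beta=d\gamma$. This yields the componentwise identity
\[
\alpha^{r,s}-\beta^{r,s}=\del\gamma^{r-1,s}+\delbar\gamma^{r,s-1}
\]
for every $(r,s)$ with $r+s=k$. I would then set $\delta:=\sum_{r\geq k-q}\gamma^{r,k-1-r}$ and claim that $\tau:=\alpha-d\delta$ is a $d$-closed representative of $[\omega]$ supported in the specified range $p\leq r\leq k-q$. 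The verification splits by threshold: for $r\geq k-q+1$ both summands of $(d\delta)^{r,s}$ come from $\gamma$ and equal $\alpha^{r,s}-\beta^{r,s}$, while $\beta^{r,s}=0$ there, so $\tau^{r,s}$ vanishes; for $r\leq k-q-1$ the form $\delta$ contributes nothing, so $(d\delta)^{r,s}=0$ and $\tau^{r,s}=\alpha^{r,s}$, which vanishes for $r<p$ by the choice of $\alpha$; at the threshold $r=k-q$ only the $\delbar$-part of $(d\delta)$ survives, modifying but not eliminating the allowed $(k-q,q)$-component.

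Case 2 will reduce to the boundary case of Case 1. Given any $(r_0,s_0)$ in the range $k-q\leq r_0\leq p$ (so $r_0+s_0=k$), the inclusions $F^p\subseteq F^{r_0}$ and $\Fbar^q\subseteq\Fbar^{s_0}$ show $[\omega]\in F^{r_0}\cap\Fbar^{s_0}$. Applying Case 1 with $(p,q)$ replaced by $(r_0,s_0)$ collapses the admissible index range to the single pair $\{(r_0,s_0)\}$, so the $\tau$ constructed above is forced to be of pure type $(r_0,s_0)$.

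The main obstacle is choosing $\delta$ correctly: it must precisely cancel the unwanted high-$r$ components of $\alpha$ without creating spurious contributions outside the allowed range. The only delicate check is at the threshold $r=k-q$, where one has to verify that the lone $\delbar$-term that appears lands inside the admissible band rather than just outside it; this is exactly why the cut-off for $\delta$ is placed at $r\geq k-q$ and not at $r\geq k-q+1$.
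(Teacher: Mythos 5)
Your argument is correct and uses the same zigzag-cleanup mechanism as the paper's proof: choose representatives $\alpha\in F^p$, $\beta\in\Fbar^q$ with $\alpha-\beta=d\gamma$, and subtract $d$ of a high-degree truncation of $\gamma$ from $\alpha$. The only organizational difference is that the paper performs one simultaneous cleanup of $\omega$, $\omega'$ and $\eta$ and then runs an iterated purity argument when $p+q>k$, whereas you deduce the $p+q\ge k$ case by specializing the $p+q\le k$ argument to boundary pairs $(r_0,s_0)$ with $r_0+s_0=k$; this is a cosmetic variation of the same idea.
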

\begin{proof}
	For an element $\omega\in A_X$, let us denote by $\omega^{r,s}$ its component in bidegree $(r,s)$. By definition, a class $\mathfrak{c}\in H^k_{dR}(X)$ is in $F^p\cap \Fbar^q$ if it has a representative $\omega=\sum_{r+s=k}\omega^{r,s}$ with $\omega^{r,s}=0$ for $r<p$ and another one $\omega'=\sum_{r+s=k}{\omega'}^{r,s}$ with ${\omega'}^{r,s}=0$ for $s<q$. So the inclusions from right to left are immediate and it remains to show the converse.\medskip
	
	Let $\omega,\omega'$ be two representatives of a class $\mathfrak{c}\in F^p\cap \Fbar^q$ as above and let $\eta=\sum_{r,s\in\Z}\eta^{r,s}$ be a form of total degree $k-1$ with $\omega=\omega'+d\eta$. This gives us a sequence of equations
	
	\[
	\omega^{r,s}={\omega'}^{r,s}+\del\eta^{r-1,s}+\delbar\eta^{r,s-1}\tag{$\ast$}.
	\]
	
	Set $\tilde{p}:=\max\{p,k-q\}$ and $\tilde{q}:=\max\{q,k-p\}$. Replacing $\omega$ by the cohomologous $\omega-\sum_{i\geq\tilde{p}}d\eta^{i,k-i-1}$ and $\omega'$ by $\omega'-\sum_{i\geq\tilde{q}}d\eta^{k-i-1,i}$, we may assume that $\eta^{r,s}=0$ for $r\geq \tilde{p}$ or $s\geq \tilde{q}$, $\omega^{r,s}=0$ for $r\not\in[p,\tilde{p}]$ and ${\omega'}^{r,s}=0$ for $r\not\in[q,\tilde{q}]$.\medskip 
	
	Now we distinguish two cases. If $\tilde{p}=k-q$, i.e. $k\geq p+q$, we are done. If $\tilde{p}=p>k-q$, the element $\omega=\omega^{p,k-p}$ is pure of bidegree $(p,k-p)$ and $\omega'=\omega^{k-q,q}$ pure of bidegree $(k-q,q)$. By $(\ast)$, we obtain $\omega=\del\eta^{p-1,k-p}$ which is cohomologous to the pure element $-\delbar\eta^{p-1,k-p}$. Applying the same reasoning over and over again, we obtain representatives for $\mathfrak{c}$ that are pure in degrees $(k-q,q),...,(p,k-p)$. \end{proof}

\begin{prop}\label{prop: obvious linear relations between refined Betti numbers}
The refined Betti numbers of an $n$-dimensional compact complex manifold $X$ satisfy the following universal relations:
\begin{enumerate}
\item[(B1)] Bounded support: One has $b_k^{p,q}(X)=0$ unless $0\leq p,q\leq k$.
\item[(B2)] Conjugation symmetry: $b_k^{p,q}(X)=b_k^{q,p}(X)$ for all $p,q,k\in\Z$.
\item[(B3)] Serre symmetry: $b_k^{p,q}(X)=b_{2n-k}^{n-p,n-q}(X)$.
\item[(B4)] Boundary case vanishing: $b_1^{1,1}(X)=0$ and $b_n^{n,p}(X)=0$ for $p>0$.
\end{enumerate}
\end{prop}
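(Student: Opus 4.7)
I would prove the four relations in turn, with (B1)--(B3) essentially formal and (B4) requiring two short analytic inputs. For (B1), bounded support is immediate: $F^p H_{dR}^k(X)$ equals the whole cohomology for $p \leq 0$ and vanishes for $p > k$ since no summand $A^{r,s}$ of total degree $k$ has $r > k$, and symmetrically for $\Fbar^q$; hence $H_k^{p,q}$ can only be nonzero when $0 \leq p,q \leq k$. For (B2), complex conjugation is a real involution on $H_{dR}^k(X;\C)$ swapping $F$ and $\Fbar$, so it restricts to an isomorphism $F^p \cap \Fbar^q \cong F^q \cap \Fbar^p$ which descends to the associated graded pieces.

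For (B3), I use Poincar\'e duality. The wedge-integrate pairing $H_{dR}^k(X;\C) \otimes H_{dR}^{2n-k}(X;\C) \to \C$ is perfect, and since $A^{p,q}\wedge A^{p',q'}\subseteq A^{p+p',q+q'}$ together with $F^{n+1}H_{dR}^{2n}(X)=0$, the subspace $F^p H_{dR}^k$ is exactly the annihilator of $F^{n-p+1} H_{dR}^{2n-k}$. This yields a perfect pairing $\gr_F^p H_{dR}^k \otimes \gr_F^{n-p} H_{dR}^{2n-k} \to \C$, and applying the same to $\Fbar$ gives a perfect pairing between the bigraded pieces $H_k^{p,q}$ and $H_{2n-k}^{n-p,n-q}$, so their dimensions agree.

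For (B4), both statements reduce to the vanishing of an appropriate $F^p \cap \Fbar^q$. To see $b_1^{1,1}=0$ one must show $F^1 \cap \Fbar^1 = 0$ in $H_{dR}^1$: a class here has a $d$-closed $(1,0)$-representative $\omega$ and a $(0,1)$-representative $\omega'$, and from $\omega-\omega'=df$ the bidegree-$(1,0)$ comparison gives $\omega = \del f$, while $d\omega = 0$ forces $\del\delbar f = 0$. The real and imaginary parts of $f$ are then real pluriharmonic functions, hence constant on the compact connected $X$ by the maximum modulus principle applied to $e^h$ for a local holomorphic potential $h$; thus $\omega = 0$. For $b_n^{n,p}=0$ with $p>0$, a class in $F^n \cap \Fbar^p$ is represented both by a holomorphic $n$-form $\omega$ and by some $\omega'$ with no $(n,0)$-component, and matching $(n,0)$-pieces in $\omega=\omega'+d\eta$ gives $\omega = \del\alpha$ with $\alpha := \eta^{n-1,0}$. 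A short computation using $d\bar\omega = 0$ and $\delbar\alpha \wedge \bar\omega \in A^{n-1,n+1} = 0$ yields $d(\alpha\wedge\bar\omega) = \omega\wedge\bar\omega$, so by Stokes $\int_X \omega\wedge\bar\omega = 0$; since $i^{n^2}\,\omega\wedge\bar\omega$ is a pointwise non-negative top form, strictly positive wherever $\omega$ is, this forces $\omega=0$.

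The main obstacle is (B4); everything else is formal from the structure of the filtrations and Poincar\'e duality. Once Lemma \ref{lem: spaces FcapFbar} supplies the pure-type representatives, both vanishings reduce to two archetypal tricks, namely the maximum principle for real pluriharmonic functions and the Stokes pairing of a holomorphic top form against its conjugate, both of which work on any compact complex manifold without K\"ahler hypotheses.
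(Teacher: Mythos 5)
Your treatments of (B1), (B2), and (B4) match the paper's in substance. For (B4) your argument is the paper's with the Stokes computation written out explicitly, which is fine; the paper just says ``using Stokes' theorem, this implies $\omega=0$.''

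The argument for (B3), however, has a genuine gap. You deduce from $A^{p,q}\wedge A^{p',q'}\subseteq A^{p+p',q+q'}$ that $F^pH^k_{dR}$ and $F^{n-p+1}H^{2n-k}_{dR}$ annihilate each other under the Poincar\'e pairing, which is correct; but you then assert without justification that $F^pH^k_{dR}$ is \emph{exactly} the annihilator. That equality is equivalent to the dimension count
$\dim F^pH^k_{dR}+\dim F^{n-p+1}H^{2n-k}_{dR}=b_k$,
i.e.\ to Serre duality for the $E_\infty$-page of the Fr\"olicher spectral sequence, and it is not a formal consequence of Poincar\'e duality plus the multiplicativity of the bigrading. (Poincar\'e duality and the orthogonality only give the inequality $\dim F^p H^k + \dim F^{n-p+1}H^{2n-k}\le b_k$.) The paper supplies the missing analytic input by working with the dual double complex $DA_X$ and invoking classical Serre duality $H^{p,q}_{\delbar}\cong(H^{n-p,n-q}_{\delbar})^\vee$ at the \emph{first} page, then propagating the isomorphism through the spectral sequence to $E_\infty$. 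If you want to retain the Poincar\'e-duality flavor of your argument, you still need to feed in Serre duality at $E_1$ (or some page) to close the dimension count; as written, the claim ``exactly the annihilator'' is where the hard content lives and it is left unproved.
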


\begin{proof}
	$(B1)$ and $(B2)$ follow directly from the definitions.
	
	For $(B3)$, consider the map of double complexes $A_X\to DA_X$, where $DA_X$ is the dual double complex, given by $(DA_X)^{p,q}:=\Hom (A_X^{n-p,n-q},\C)$ with total differential
	\begin{align*}		
		d_{DA_X}^{p+q}:=(\varphi\mapsto (-1)^{p+q+1}\varphi\circ d^{2n-p-q-1}).
	\end{align*}
	Integration $A_X\to DA_X$, $\omega\mapsto\int_X\omega\wedge\_$ defines a map of double complexes, and hence of the induced spectral sequences.
	Noting that for the first page of the spectral sequence associated with the column filtration of $DA_X$ one has
	\[
	E_1^{p,q}(DA_X)=(E_1^{n-p,n-q}(A_X))^\vee=(H_{\delbar}^{n-p,n-q}(A_X))^\vee,
	\]
	 by Serre duality \cite{serre_theoreme_1955}, integration gives an isomorphism between the first pages (and hence on all later ones). Therefore also
	\[
	E_r^{p,q}(A_X)\cong(E_r^{n-p,n-q}(A_X))^\vee
	\]
	for all $r\geq 1$. Thus:
	\[
	\gr^p_F H^k_{dR}(X)=E_{\infty}^{p,k-p}(A_X)\cong(E_{\infty}^{n-p,n-k+p}(A_X))^\vee=(\gr^{n-p}_FH_{dR}^{2n-k}(X))^\vee.
	\]
	
	As a consequence, integration induces isomorphisms
	\[
	F^pH^k_{dR}(X)\cong F^p H_{dR}^k(DA_X)\cong (H^{2n-k}_{dR}(X)/F^{n-p+1})^\vee
	\]
	for all $p$. By conjugation, the same holds for the conjugate filtration $\Fbar$. Thus, $H_k^{p,q}(X)\cong H_k^{p,q}(DA_X)\cong(H_{2n-k}^{n-p,n-q}(X))^\vee$.

	$(B4)$ follows from well-known arguments (see e.g. \cite[Ch. IV Lem. 2.1-2.3]{barth_compact_1984}). We sketch the argument for the reader's convenience: For the first part of $(B4)$, pick an element $\mathfrak{c}\in H_1^{1,1}=F^1\cap\Fbar^1$ and two representatives $\omega^{1,0}\in A_X^{1,0}$ and $\omega^{0,1}\in A_X^{0,1}$. Thus, $\omega^{0,1}-\omega^{1,0}=df$, i.e. $\omega^{0,1}=\delbar f$. Hence $\del\delbar f=d\omega^{0,1}=0$ i.e. $f$ is pluriharmonic. But since $X$ is compact, $f$ has to be constant and therefore $\mathfrak{c}=0$. Since $\mathfrak{c}$ was arbitrary, this shows $H_1^{1,1}=0$.
	
	As for the second part, let $\mathfrak{c}\in F^n\cap  \Fbar^p\subseteq H_{dR}^n(X)$ with $p>0$. By Lemma \ref{lem: spaces FcapFbar}, there exists a representative $\omega$ of type $(n,0)$ but also another one, say $\eta$, of type $(n-p,p)$. Writing out the equation $\omega-\eta=d(\theta)$ by bidegrees, we see that $\omega$ is a $\del$-exact holomorphic $n$-form. Using Stokes' theorem, this implies $\omega=0$. Thus, in particular $H_n^{n,p}=0$ for $p>0$.
\end{proof}

\begin{prop}\label{prop: refined Kuenneth}
For a product $X\times Y$ of compact complex manifolds, the K\"unneth formula is strictly compatible with the Hodge filtration and its conjugate. In particular, the following relation holds:
\[
b_k^{p,q}(X\times Y)=\sum_{\substack{k_1+k_2=k\\p_1+p_2=p\\q_1+q_2=q}}b_{k_1}^{p_1,q_1}(X)\cdot b_{k_2}^{p_2,q_2}(Y)
\]
\end{prop}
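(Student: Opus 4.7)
The plan is to first identify the K\"unneth map as a bigraded morphism of double complexes, then propagate the isomorphism through the Fr\"olicher spectral sequence and its conjugate in order to obtain strict compatibility with both $F$ and $\Fbar$, and finally to pass to the intersection filtration to extract the formula for the refined Betti numbers.

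First, I would consider the exterior product
\[\mu: A_X \otimes_\C A_Y \longrightarrow A_{X \times Y}, \qquad \alpha \otimes \beta \longmapsto \pr_X^* \alpha \wedge \pr_Y^* \beta,\]
which commutes with $\del$ and $\delbar$ and respects the bigrading. By the classical Dolbeault K\"unneth formula (see e.g.\ \cite{griffiths_principles_1978}), $\mu$ induces an isomorphism in $\delbar$-cohomology in every bidegree, and by conjugating it also induces an isomorphism in $\del$-cohomology. Hence $\mu$ is a bigraded $E_1$-isomorphism of bounded double complexes. Since tensor product is exact over $\C$, the usual K\"unneth theorem for spectral sequences of filtered complexes gives $E_r(A_X\otimes A_Y)\cong E_r(A_X)\otimes_\C E_r(A_Y)$ on every page of both the column and the row spectral sequences. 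Combined with the previous sentence, $\mu$ induces isomorphisms on all pages of the Fr\"olicher spectral sequence and of its conjugate, and in particular on $E_\infty$. Since these $E_\infty$-pages compute $\gr_F^\bullet H_{dR}^\bullet$ and $\gr_{\Fbar}^\bullet H_{dR}^\bullet$ respectively, one obtains that the K\"unneth map
\[\mu_*:\bigoplus_{k_1+k_2=k} H^{k_1}_{dR}(X)\otimes H^{k_2}_{dR}(Y)\xrightarrow{\;\sim\;}H^{k}_{dR}(X\times Y)\]
is strictly compatible with both $F$ and $\Fbar$, i.e.\ $F^p$ on the right corresponds under $\mu_*$ to $\bigoplus_{p_1+p_2=p} F^{p_1}\otimes F^{p_2}$ on the left, and similarly for $\Fbar$.

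To extract the formula for $H_k^{p,q}$, I would then show that the intersection $F^p\cap \Fbar^q$ is also compatible with the K\"unneth decomposition, and pass to the successive quotients defining $H_k^{p,q}$. Using Lemma \ref{lem: spaces FcapFbar}, a class in $F^p\cap \Fbar^q H_{dR}^k(X\times Y)$ admits representatives of several prescribed pure Hodge types; via $\mu$, each such representative decomposes as a sum of wedge products of pure-type representatives on $X$ and $Y$ with matching total bidegrees, and combined with the bigraded strict compatibility from the previous paragraph this yields
\[F^p\cap \Fbar^q H_{dR}^k(X\times Y)=\bigoplus_{\substack{p_1+p_2=p\\q_1+q_2=q\\k_1+k_2=k}} \bigl(F^{p_1}\cap \Fbar^{q_1} H_{dR}^{k_1}(X)\bigr)\otimes\bigl(F^{p_2}\cap \Fbar^{q_2} H_{dR}^{k_2}(Y)\bigr).\]
Passing to the quotients then gives the claimed bigraded decomposition of $H_k^{p,q}$, and the formula for $b_k^{p,q}$ follows by taking dimensions.

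I expect the main obstacle to be this last step: passing from strict compatibility of $F$ and of $\Fbar$ individually to compatibility of the intersection $F^p\cap \Fbar^q$ is not automatic for an arbitrary pair of filtrations on a vector space. The cleanest way around this is to stay at the level of double complexes throughout and to invoke that the refined Betti numbers depend only on the $E_1$-isomorphism class of $A_X$ as a bounded double complex, for which the tensor-product identity is manifest by bidegree considerations.
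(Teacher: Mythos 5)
Your argument for strict compatibility of the K\"unneth isomorphism with $F$ and with $\Fbar$ is essentially the paper's proof: the map $\pi_X^*\otimes\pi_Y^*\colon A_X\otimes A_Y\to A_{X\times Y}$ is a map of double complexes, it induces an isomorphism on the $E_1$-page (Dolbeault K\"unneth), hence on $E_\infty$, hence is a strict filtered isomorphism for $F$; and it is conjugation-invariant, giving the same for $\Fbar$. Your step involving the K\"unneth theorem for spectral sequences to compute the pages of the left-hand side is fine but not strictly necessary for this part.

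Your worry about the final step is well placed and worth spelling out, because the paper's own proof is equally terse there: it proves strict compatibility with $F$ and with $\Fbar$ separately, and then leaves the ``in particular'' formula implicit. As you say, for an \emph{arbitrary} bifiltered vector space, strict compatibility with the two filtrations separately does not by itself give a tensor-product formula for the bigraded pieces $H^{p,q}=\tfrac{F^p\cap\Fbar^q}{F^{p+1}\cap\Fbar^q+F^p\cap\Fbar^{q+1}}$. What saves the day here is that the pair $(F,\Fbar)$ on $H_{dR}^k(X)$ is \emph{split}: by Theorem~\ref{thm: structure dc}, $A_X$ decomposes into zigzags and squares, squares are acyclic, and each odd zigzag contributes a one-dimensional summand of $H_{dR}^k(X)$ on which $F$ and $\Fbar$ are simultaneously determined by a single bidegree. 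Thus $H_{dR}^k(X)\cong\bigoplus_{p,q}V^{p,q}$ with $F^r=\bigoplus_{p\geq r}V^{p,q}$, $\Fbar^s=\bigoplus_{q\geq s}V^{p,q}$, and $\dim V^{p,q}=b_k^{p,q}(X)$. Tensor products of split bifiltered spaces are again split with the expected bigrading, so on the K\"unneth side the intersection $F^p\cap\Fbar^q$ does decompose as you want, and the formula follows. This is just a filtered-vector-space reformulation of the workaround you propose in your last paragraph: passing through the $E_1$-isomorphism class of the double complex and reading $b_k^{p,q}$ as multiplicities of odd zigzags (which behave multiplicatively under tensor product) gives the cleanest proof, and is precisely the language the paper develops in Section~\ref{sec: univ ring} (cf.\ Lemma~\ref{lem: same cohomological invariants}). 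By contrast, the intermediate argument you sketch via Lemma~\ref{lem: spaces FcapFbar} runs into the difficulty that $\mu$ is only a quasi-isomorphism, not surjective on forms, so a pure-type representative on $X\times Y$ need not literally decompose under $\mu$; I would drop that route in favor of the splitting argument.
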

Recall that a map of filtered vector spaces $f:(V,F^\Cdot)\to(W,F^\Cdot)$ is called strict if $f(F^p)=\im f \cap F^p$ for all $p$.
\begin{proof}
The K\"unneth isomorphism in de Rham cohomology is induced by the map $\pi_X^*\otimes\pi_Y^*:A_X\otimes A_Y\longrightarrow A_{X\times Y}$ given by the two pullback maps. This is a map of double complexes and the grading on $A_X \otimes A_Y$ is the tensor product grading. The K\"unneth formula in Dolbeault cohomology states that it induces an isomorphism in Dolbeault cohomology, i.e. the first page of the Fr\"olicher spectral sequence. Hence, the induced maps on all later pages are isomorphisms as well. In particular this holds for the page $E_\infty$. But that page is the graded vector space associated with the Hodge filtration on $H_{dR}$ and a map of filtered vector spaces is a strict isomorphism if and only if the map of the associated graded is an isomorphism. The result for the conjugate filtration follows since $\pi_X^*\otimes\pi_Y^*$ is, as all maps of geometric origin, conjugation invariant.
\end{proof}

\section{The refined de Rham ring}\label{sec: refined de Rham ring}
By Proposition \ref{prop: refined Kuenneth}, we obtain a ring homomorphism
\begin{align*}
rb:\CM_\ast&\longrightarrow \Z[x,y,h,z]\\
[X]&\longmapsto rb(X):=\sum_{p,q,k}b_k^{p,q}(X)x^p y^q h^k z^{\dim X}.
\end{align*}
We denote the image by $\RB_\ast$ and call it the \textbf{refined de Rham ring}.

\begin{definition}\label{def. formal de Rham ring}
$\RB^{form}_\ast$ and $\RB'_\ast$  are the graded subrings of $\Z[x,y,h,z]$ defined by:
\[
\RB^{form}_n:=\left\{\left(\sum_{p,q,k\geq 0} b_{k}^{p,q} x^py^qh^k\right)\cdot z^n\in\Z[x,y,h,z]~\bigg|~ \substack{b_k^{p,q}\text{ subject to conditions}\\\text{(B1) -- (B4) of Prop. \ref{prop: obvious linear relations between refined Betti numbers}}}\right\}
\] and 
\[
\RB'_n:=\left\{\left(\sum_{p,q,k\geq 0} b_{k}^{p,q} x^py^qh^k\right)\cdot z^n\in\Z[x,y,h,z]~\bigg|~ \substack{b_k^{p,q}\text{ subject to conditions}\\\text{(B1) -- (B3) of Prop. \ref{prop: obvious linear relations between refined Betti numbers}}}\right\}.
\]
\end{definition}

\begin{thm}\label{thm: surjection to RB'} There is a surjective map of rings
\begin{align*}
\Phi:\Z[A,B,C,L,M]&\longrightarrow \RB'_\ast\\
A&\longmapsto(1+xyh^2)z\\
B&\longmapsto(xh+yh)z\\
C&\longmapsto xyh^2z^2\\
L&\longmapsto(h+x^2y^2h^3)z^2\\
M&\longmapsto(xyh+h)z.
\end{align*}

Setting $|A|=|B|=|M|=1$ and $|C|=|L|=2$, it is compatible with the grading. The kernel is given by the principal ideal $I$ generated by

\[
AML-L^2-CM^2-CA^2+4C^2.
\]

\end{thm}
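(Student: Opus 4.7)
The claim has three components: $\Phi$ is well-defined (its image lies in $\RB'_\ast$), it is surjective, and $\ker\Phi=(G)$. I would handle well-definedness first by direct inspection, checking each of $A, B, C, L, M$ against conditions (B1)--(B3). For instance, $L=(h+x^2y^2h^3)z^2$ pairs the monomials $h$ and $x^2y^2h^3$ as required by Serre symmetry in degree $n=2$, and both are symmetric in $x\leftrightarrow y$; the other generators are similar.

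Surjectivity is the main obstacle. The plan is to show that every symmetrized monomial basis element of $\RB'_n$ lies in the image. In degree $1$, the three Serre-and-conjugation orbits produce exactly $A$, $B$, and $M$. In degree $2$, direct expansion confirms that a basis of $\RB'_2$ is given by $A^2-2C$, $L$, $AB$, $AM-L$, $M^2-2C$, $BM$, $C$, $B^2-2C$. For higher $n$, I would argue either by induction---showing $\RB'_\ast$ is generated by its degree $1$ and $2$ parts---or by an explicit enumeration of Serre-and-conjugation orbits combined with polynomial identities in the generators. A useful check on the final answer is that the Hilbert series of $\Z[A,B,C,L,M]/(G)$, with weighting $|A|=|B|=|M|=1$, $|C|=|L|=2$, $|G|=4$, equals $(1+t^2)/((1-t)^3(1-t^2))$, whose coefficients $1,3,8,16,29,\ldots$ match the ranks of $\RB'_n$ obtained by a direct count of the $(p,q,k)$-orbits under the $\Z/2\times\Z/2$-action.

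For the kernel, I would first show $(G)\subseteq\ker\Phi$ by polynomial expansion, clarified by the identities $A^2-4C=(1-xyh^2)^2z^2$ and $M^2-4C=h^2(1-xy)^2z^2$, which together give $(2L-AM)^2=(A^2-4C)(M^2-4C)$, i.e., $\Phi(G)=0$. For the reverse containment, I would establish three facts in turn: (i) $A, B, C, M$ are algebraically independent in $\Z[x,y,h,z]$, verified in each graded degree by identifying monomials that appear in exactly one product (for example, in degree $2$ the monomials $1$, $x^2yh^3$, $x^2y^2h^3$, $x^2h^2$, $x^2yh^2$, $h^2$ distinguish $A^2, AB, AM, B^2, BM, M^2$, after which $C$ is clearly independent), so that $\Z[A,B,C,M]$ is a graded polynomial subring and hence a UFD; (ii) in this UFD, $A^2-4C$ and $M^2-4C$ are distinct irreducibles (each being a linear polynomial in $C$), so their product is not a square, whence $G$, viewed as a monic quadratic in $L$ with discriminant $(A^2-4C)(M^2-4C)$, is irreducible over $\operatorname{Frac}(\Z[A,B,C,M])$; (iii) consequently $\Z[A,B,C,L,M]/(G)$ is a free $\Z[A,B,C,M]$-module of rank $2$ with basis $\{1,L\}$, and if $P_0+P_1 L\in\ker\Phi$ with $P_i\in\Z[A,B,C,M]$, then $P_1=0$ (else $L$ would lie in the fraction field of $\Z[A,B,C,M]$, contradicting (ii)), and then $P_0=0$ also.
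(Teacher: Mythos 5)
Your kernel argument is correct and genuinely different from the paper's: you locate $\ker\Phi$ by writing any element mod $G$ as $P_0+P_1L$ with $P_i\in\Z[A,B,C,M]$, using that $\Phi(A),\Phi(B),\Phi(C),\Phi(M)$ are algebraically independent (your monomial-separation sketch is only carried out in degree $2$, but it is routine to complete, e.g.\ by a leading-monomial argument) and that $-G$ is an irreducible monic quadratic in $L$ over $\Q(A,B,C,M)$ because its discriminant $(A^2-4C)(M^2-4C)$ is a non-square in that UFD's fraction field. The paper instead gets the kernel by a rank count: it computes $\rk(\Z[A,B,C,L,M]/(G))_n$ and $\rk\RB'_n$, shows they agree, and deduces injectivity from surjectivity. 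Your route has the advantage of being independent of surjectivity; but it therefore carries none of the weight of the surjectivity claim, which is where the paper's actual work lies.

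That is the genuine gap: surjectivity is only verified in degrees $1$ and $2$, and for $n\geq 3$ you offer two unexecuted strategies. The assertion that $\RB'_\ast$ is generated by its degree $\leq 2$ part is exactly (a reformulation of) what has to be proved, not something you can invoke; and the Hilbert-series coincidence cannot close the gap over $\Z$: combined with your kernel computation it only shows that the induced map $\Z[A,B,C,L,M]/(G)\to\RB'_\ast$ is an injection of free graded $\Z$-modules of equal finite rank in each degree, and such an injection need not be onto (it yields surjectivity only after tensoring with $\Q$, whereas the theorem is integral). The paper fills precisely this hole with an explicit induction on $n$ over the symmetrized monomial basis $Sym^{p,q}_k(n)$ of $\RB'_n$: interior entries are obtained as $\Phi(C)\cdot Sym^{p-1,q-1}_{k-2}(n-2)$, the edge cases $q=0$ via $\Phi(B)^k\Phi(A)^{n-k}$ and products with $\Phi(L)$ modulo multiples of $\Phi(C)$, and the remaining boundary cases via products with $\Phi(M)$, all with integral coefficients. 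Without identities of this kind (or some other argument producing each basis element integrally), your proposal does not establish surjectivity, so the theorem as stated is not yet proved.
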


The degree $n$ part $\RB'_n$ is a free $\Z$-module. The proof of surjectivity works by writing down preimages for every element in a basis. The computation of the kernel will follow from a rank counting argument. We will need two preparatory Lemmas:

\begin{lem}
The degree $n$ part of $\Z[A,B,C,L,M]/I$ has rank 
\[
r_n:=\rk (\Z[A,B,C,L,M]/I)_n=\left\lfloor\frac{2n^3+9n^2+16n+12}{12}\right\rfloor.
\]
\end{lem}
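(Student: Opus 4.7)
The strategy is a standard Hilbert series computation. Since $\Z[A,B,C,L,M]$ is a polynomial ring over $\Z$, it is an integral domain, so multiplication by the homogeneous element $G = AML - L^2 - CM^2 - CA^2 + 4C^2$ (which has weighted degree $4$) is injective. This yields a short exact sequence of graded $\Z$-modules
\[
0 \longrightarrow \Z[A,B,C,L,M][-4] \xrightarrow{\;\cdot G\;} \Z[A,B,C,L,M] \longrightarrow \Z[A,B,C,L,M]/I \longrightarrow 0,
\]
so $r_n = P(n) - P(n-4)$, where $P(n) := \rk \Z[A,B,C,L,M]_n$.

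Next I would read off the generating function. Because the generators $A,B,M$ have degree $1$ and $C,L$ have degree $2$,
\[
\sum_{n\geq 0} P(n)\,t^n = \frac{1}{(1-t)^3(1-t^2)^2},
\]
and therefore
\[
\sum_{n\geq 0} r_n\, t^n = \frac{1-t^4}{(1-t)^3(1-t^2)^2} = \frac{1+t^2}{(1-t)^3(1-t^2)}.
\]

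Finally I would extract coefficients. Writing $\frac{1}{1-t^2} = \sum_{k\geq 0} t^{2k}$ and $\frac{1}{(1-t)^3} = \sum_{m\geq 0}\binom{m+2}{2}t^m$, one gets $r_n = c_n + c_{n-2}$ with $c_n = \sum_{k=0}^{\lfloor n/2\rfloor}\binom{n-2k+2}{2}$. Summing this arithmetic progression separately for $n$ even and $n$ odd gives the closed forms
\[
r_{2m} = \frac{(m+1)(4m^2+5m+3)}{3}, \qquad r_{2m+1} = \frac{(m+1)(4m^2+11m+9)}{3},
\]
which in terms of $n$ read $r_n = (2n^3+9n^2+16n+12)/12$ for $n$ even and $r_n = (2n^3+9n^2+16n+9)/12$ for $n$ odd. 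Since the former is an integer and the latter equals $(2n^3+9n^2+16n+12)/12 - 1/4$, both cases collapse into the unified formula $r_n = \lfloor (2n^3+9n^2+16n+12)/12\rfloor$.

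There is no real obstacle here: once one checks that $G$ is a nonzerodivisor (immediate from the domain property), the rest is routine coefficient bookkeeping. The only minor subtlety is matching the two parities against a single floor expression, which is handled by the observation that the numerator differs from $2n^3+9n^2+16n+12$ by exactly $3$ when $n$ is odd.
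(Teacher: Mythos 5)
Your proof is correct and follows essentially the same route as the paper: both start from the Hilbert series $\sum_n \rk\Z[A,B,C,L,M]_n\,t^n = \frac{1}{(1-t)^3(1-t^2)^2}$ and the relation $r_n = s_n - s_{n-4}$ coming from $I$ being principal of degree $4$. The paper verifies the closed formula for $n=0,1,2,3$ and then appeals to induction via the recurrence $s_n = s_{n-2} + \sum_{i+2j=n}\binom{i+2}{2}$, whereas you simplify the generating function to $\frac{1+t^2}{(1-t)^3(1-t^2)}$, extract explicit closed forms $r_{2m}$ and $r_{2m+1}$, and match them to the floor expression directly; your variant is a bit more self-contained, and you also make explicit the nonzerodivisor hypothesis that the paper leaves implicit, but the underlying computation is the same.
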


\begin{proof}
Recall that $A,B,M$ have degree $1$ and $C,L$ degree $2$. Hence, the generating function for the numbers $s_n:=\rk\Z[A,B,C,L,M]_n$ is given by 

\begin{align*}
\sum_{n=0}^{\infty}s_nt^n&=\left(\sum_{i=0}^\infty t^i\right)^3\cdot\left(\sum_{i=0}^\infty t^{2i}\right)^2\\
&=\left(\sum_{i=0}^\infty\binom{i+2}{2}t^i\right)\cdot\left(\sum_{i=0}^\infty\binom{i+1}{1}t^{2i}\right)\\
&=\sum_{n=0}^\infty\left(\sum_{i+2j=n}\binom{i+2}{2}(j+1)\right)t^n.
\end{align*}
Since $I$ is a principal ideal generated by an element of degree $4$, one has $r_n=s_n-s_{n-4}$. In particular, $r_n=s_n$ for $n=0,1,2,3$ and in these cases the formula of the lemma is easily checked directly (the values being $r_0=1$, $r_1=3$, $r_2=8$ and $r_3=16$). The other cases follow by induction using 
\[
s_n=s_{n-2}+\sum_{i+2j=n}\binom{i+2}{2}.
\]\end{proof}

Fix some degree $n\in\Z_{\geq 0}$. For $k\in \{0,...,2n\}$ and $p,q\in [\max(k-n,0),\min(k,n)]\cap\Z$, define the polynomial
\[
Sym^{p,q}_k(n):= \alpha^{p,q}_{k,n}\cdot(x^py^qh^k+x^qy^ph^k+x^{n-p}y^{n-q}h^{2n-k}+x^{n-q}y^{n-p}h^{2n-k})z^n\in \RB'_n,
\]
where $\alpha^{p,q}_{k,n}\in\{1,\frac{1}{2},\frac{1}{4}\}$ is the number of distinct monomials occurring divided by four. To explain the need for $\alpha$, there are two involutions acting on the degree $n$ part of $\Z[x,y,h,z]$: the one flipping along the diagonal, i.e. $x^py^q\mapsto x^qy^p$, and the one flipping along the antidiagonal, i.e. $x^py^qh^kz^n\mapsto x^{n-p}y^{n-q}h^{2n-k}z^n$. These combine into an action of $(\Z/2\Z)^2$. Conditions {\it(B2)} and {\it(B3)} ensure that elements in $\RB'_\ast$ are invariant with respect to this action. The polynomial $Sym^{p,q}_k(n)$ is the sum of the elements in the orbit of $x^py^qh^kz^n$.

\begin{lem}
The following polynomials constitute a basis for $\RB'_n$:
\[
Sym^{p,q}_k(n)\text{ for }0\leq k\leq n-1\text{ and }0\leq q\leq p\leq k,
\]
\[
Sym^{p,q}_n(n)\text{ for } 0\leq q\leq p\leq n\text{ and } p+q\leq n.
\] 
The rank of $\RB'_n$ is 
\[
\rk \RB'_n=\left\lfloor\frac{2n^3+9n^2+16n+12}{12}\right\rfloor.
\]
\end{lem}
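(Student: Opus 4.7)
The definition of $\RB'_n$ encodes precisely that a polynomial $P \in \Z[x,y,h,z]$ of degree $n$ is invariant under the $(\Z/2\Z)^2$-action generated by the two involutions described right before the statement of the lemma (conjugation giving (B2) and Serre giving (B3)), while (B1), combined with the support bound one obtains by plugging (B1) into (B3), forces the support of $P$ to lie in the set of monomials $x^p y^q h^k z^n$ with $0 \le k \le 2n$ and $\max(k-n,0)\le p,q\le \min(k,n)$. By construction, each $Sym^{p,q}_k(n)$ is the orbit sum of $x^p y^q h^k z^n$ under this action, the prefactor $\alpha^{p,q}_{k,n}$ being chosen so that each monomial in the orbit appears with coefficient $1$. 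Hence my plan is to verify that, as $(p,q,k)$ ranges over the two families listed, one obtains exactly one representative of every orbit. Linear independence is then immediate because distinct orbits have disjoint monomial supports, and spanning follows because any element of $\RB'_n$ is constant on orbits and hence a $\Z$-combination of the orbit sums.

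For the enumeration I separate according to whether the Serre-partner slice $h^{2n-k}$ coincides with the $h^k$-slice. When $0 \le k \le n-1$ they are disjoint, so the orbit of $x^p y^q h^k z^n$ meets the $h^k$-slice in $\{(p,q),(q,p)\}$, and a unique representative of that intersection is pinned down by $0 \le q \le p \le k$; this yields $\binom{k+2}{2}$ orbits for each $k$, summing to $\binom{n+2}{3}$ across $k = 0,\dots, n-1$. When $k = n$ the antidiagonal reflection acts within the single slice $h^n$, so both involutions of the Klein four group must be factored out on the square $\{(p,q) : 0\le p,q \le n\}$; a fundamental domain for the subgroup generated by $p\leftrightarrow q$ and $(p,q)\mapsto (n-p,n-q)$ is the triangle $0 \le q \le p$, $p+q \le n$, and a short case split on the parity of $n$ shows this triangle contains exactly $\lfloor (n+2)^2/4 \rfloor$ lattice points.

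Summing the two contributions produces $\rk \RB'_n = \binom{n+2}{3} + \lfloor (n+2)^2/4\rfloor$, which an elementary algebraic manipulation — again splitting $n \bmod 2$ — rewrites as $\lfloor (2n^3+9n^2+16n+12)/12 \rfloor$. The main subtlety is the $k=n$ slice, where the two commuting involutions degenerate onto one plane and orbits of sizes $1$, $2$, and $4$ coexist; this is what the normalising factor $\alpha^{p,q}_{k,n}$ is designed to absorb, and it is also what produces the parity-dependent floor appearing in the rank formula.
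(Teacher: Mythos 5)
Your proposal is correct and follows essentially the same route as the paper: both arguments reduce to picking one representative per $(\Z/2\Z)^2$-orbit of monomials, handle the slices $k\le n-1$ by counting triangular numbers, and treat the self-dual slice $k=n$ via a fundamental domain in the $n\times n$ square, arriving at $\binom{n+2}{3}+\lfloor (n+2)^2/4\rfloor$. Your packaging of the $k=n$ count as a single floor and the explicit group-action framing are mild expository improvements, but the decomposition and the counting are identical to the paper's.
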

\begin{proof}
An arbitrary element $P\in \RB'_n$ is a sum of monomials subject to condition {\it(B1)}. By the symmetry conditions {\it(B2)} and {\it(B3)}, for every monomial $a\cdot x^py^qh^dz^n$ occurring in $P$, every monomial in $Sym^{p,q}_k(n)$ occurs with coefficient $a$. Hence the $Sym^{p,q}_k(n)$ with only condition {\it(B1)} as restriction on $p,q,k$ form a generating set. It remains to reduce the redundancy caused by the symmetries $Sym^{p,q}_k(n)=Sym^{q,p}_k(n)=Sym^{n-p,n-q}_{2n-k}(n)$.\medskip

For fixed $k$ and $n$, one may picture the allowed monomials $x^py^qh^kz^n$ occurring as summands in elements of $\RB'_n$ in a `Hodge diamond', e.g. for $k=2$, $n\geq 2$, the following (up to multiplication with $h^2z^n$):
\[
\begin{array}{ccccc}
&&x^2y^2&&\\[3mm]
&x^2y^1&&x^1y^2&\\[3mm]
x^2&&xy&&y^2\\[3mm]
&x&&y&\\[3mm]
&&1&&
\end{array}
\]
By the symmetry under exchange of $x$ and $y$, one only has to consider the left half (including the central column) of each diamond. Similarly, by the symmetry along $x^py^qh^kz^n\mapsto x^{n-p}y^{n-q}h^{2n-k}z^n$, one only has to consider the first $n-1$ of the diamonds and the lower half (including central row) of the $n$-th diamond, and there are no further redundancies. The $Sym^{p,q}_k(n)$ in the statement of the lemma are exactly those such that $x^py^qh^kz^n$ lies in both of these regions.\medskip

It remains to show the statement of the rank. For the $k$-th diamond, where $0\leq k\leq n-1$, one obtains $T_{k+1}=1+2+...+(k+1)=\frac{(k+2)(k+1)}{2}$ polynomials. For the $n$-th diamond, one only has to count monomials in the lower left quarter. If $n$ is odd (resp. even), this amounts to summing the even (resp. odd) numbers between $0$ and $n+1$, yielding $(\frac{n+1}{2})(\frac{n+3}{2})$ (resp. $(\frac{n+2}{2})^2$).

Since the sum of the first $r$ triangular numbers is $\frac{r(r+1)(r+2)}{6}$, one obtains a total dimension of

\[
\rk \RB'_n=\frac{n(n+1)(n+2)}{6}+\frac{(n+1)(n+3)}{4}=\frac{2n^3+9n^2+16n+9}{12}
\]
if $n$ is odd and
\[
\rk \RB'_n=\frac{n(n+1)(n+2)}{6}+\left(\frac{n+2}{2}\right)^2=\frac{2n^3+9n^2+16n+12}{12}
\]
for $n$ even.
\end{proof}

\begin{proof}[Proof of Theorem \ref{thm: surjection to RB'}]
Our goal is thus to show that all the basis elements $Sym^{p,q}_k(n)$ lie in the subring generated by the images of $A,B,C,M,L$ above. We do this by induction on $n$.\medskip

For $n=1$, one has $A\mapsto Sym_0^{0,0}(1)$, $B\mapsto Sym_1^{1,0}(1)$ and $M\mapsto Sym_1^{0,0}(1)$ by definition. From now on, let $n\geq 2$.\medskip

Since $\Phi(C)=xyh^2z^2=Sym_{2}^{1,1}(2)$ consists of one monomial, one has 
\[
Sym^{p,q}_k(n)\cdot \Phi(C)=Sym^{p+1,q+1}_{k+2}(n+2).
\]
Thus, if $p,q\not\in\{0,k\}$ (and hence necessarily $k\geq 2$) one has
\begin{align*}
Sym^{p,q}_k(n)=Sym^{p-1,q-1}_{k-2}(n-2)\cdot \Phi(C)
\end{align*}
and the left factor on the right hand side may be assumed to lie in the image of $\Phi$ by induction.\medskip

If $k$ is arbitrary, $q=0$ and $p=k$ one has, modulo $\Phi(C)$,
\begin{align*}
Sym^{k,0}_k(n)	&=\alpha_{k,n}^{p,q}\cdot(x^dh^k+y^kh^k+x^{n-k}y^nh^{2n-k}+x^ny^{n-k}h^{2n-k})z^n\\
				&\equiv[(xh+yh)]^k\cdot[(1+xyh^2)z]^{n-k}\\
				&=\Phi(B)^k\Phi(A)^{n-k}\in\im\Phi.
\end{align*}

Let now $q=0$ and $n>k>p\geq 0$. One has
\begin{align*}
\Phi(L)\cdot Sym_{k-1}^{p,0}(n-2)	&= Sym_k^{p,0}(n)+Sym_{k+2}^{p+2,2}(n)&\\
									&\equiv Sym_k^{p,0}(n)&\text{mod }\Phi(C)
\end{align*}
and so $Sym^{p,0}_k(n)$ is in the image of $\Phi$ by induction.\medskip

Next, let $n>p=k\geq q\geq 1$. One computes
\begin{align*}
\Phi(M)\cdot Sym_{k-1}^{k-1,k-1}(n-1)&=Sym_k^{k,q}(n)+Sym_k^{k-1,q-1}(n)
\end{align*}
and so by induction $Sym^{p,q}_k(n)$ is in the image of $\Phi$ whenever $Sym_k^{k-1,q-1}(n)$ is. But if $q\geq 2$, the latter is a multiple of $\Phi(C)$ and for $q=1$ it is $Sym^{k-1,0}_k(n)$, which is in the image of $\Phi$ by the previous paragraph.\medskip

It remains to treat the case $k=n$ and $p$ or $q$ in $\{0,n\}$. For $q=0$ and $n-1> p\geq 0$, one has the equation
\begin{align*}
\Phi(M)\cdot Sym_{n-1}^{p,0}(n-1)&=Sym_n^{p,0}(n)+Sym_n^{p+1,1}(n)
\end{align*}
and the right summand is a multiple of $\Phi(C)$. For $p=n-1, q=0$, one has directly
\begin{align*}
\Phi(M)\cdot Sym_{n-1}^{n-1,0}(n-1)&=Sym_n^{n-1,0}(n).
\end{align*}
Since for $k=n$, one has the restriction $p+q\leq n$, this ends the verification of surjectivity.

The second part of the theorem now follows, since $\Phi$ vanishes on the generator of $I$ and hence induces in each degree a surjective map
\[
(\Z[A,B,C,M,L]/I)_n\longrightarrow \RB'_n.
\]
But both sides are free $\Z$-modules of the same rank, so the map has to be injective as well.\end{proof}

\begin{rem}\label{rem: generators for RB up to degree 2}
	One may now check that $A,B,C,L$ can indeed be realized by complex manifolds as
\begin{align*} 
A&=rb([\C\Pro^1])\\
B&=rb([E]-[\C\Pro^1])\\
C&=rb([\C\Pro^1\times\C\Pro^1]-[\C\Pro^2])\\
L&=rb([H]-2[\C\Pro^2]+[\C\Pro^1\times\C\Pro^1]),
\end{align*} 
where $E$ denotes an elliptic curve and $H$ a Hopf surface.\footnote{As in Theorem \ref{thm: H=Hform}, instead of $\CP^2$ and $H$, we may also use an arbitrary K\"ahler surface of signature $\pm 1$ and an arbitrary non-K\"ahler surface, although the exact formulas may differ.}
In particular, we recover the Hodge ring of K\"ahler manifolds from  \cite{kotschick_hodge_2013} (c.f. section \ref{sec: Hodge, Betti, Chern}) as $\cH_\ast^K=\Z[A,B,C]\subseteq\RB_\ast$. However, $M$ is \textbf{not} in the image of $rb$, since it violates condition (B4). For instance, it would correspond to a (formal linear combination of) curve(s) not satisfying the $\del\delbar$-lemma. So we see that there is a single generator being responsible for the inclusion $\RB_\ast\subseteq \RB'_\ast$ failing to be an equality.
\end{rem}

It remains possible (and plausible) that the ring $\RB^{form}_\ast$ equals $\RB_\ast$. To show this, one might proceed as before: write down generators for $\RB^{form}_\ast$ and show that they can all be realized by $\Z$-linear combinations of actual compact complex manifolds. Unlike in the previous cases, however, it turns out that $\RB^{form}_\ast$ is not finitely generated:

\begin{thm}
Under the isomorphism
\[
\overline{\Phi}:\Z[A,B,C,L,M]/I\longrightarrow \RB'_\ast,
\]
the subring $\RB^{form}_\ast\subseteq \RB'_\ast$ corresponds to the subring generated by $A$, $B$, $C$, $L$, $ABM$, $CM$ and the collection $AM^{n+1}$, $M^nL$ for $n\in\Z_{\geq 1}$.\medskip

In terms of the polynomials $Sym_k^{p,q}(n)$, a set of generators for $\RB^{form}_\ast$ is given by  
\[
Sym_0^{0,0}(1)=\Phi(A),~ Sym_1^{1,0}(1)=\Phi(B),~Sym_2^{1,1}(2)=\Phi(C),~ Sym_1^{0,0}(2)=\Phi(L),\]
the polynomials
\begin{align*}
Sym^{2,1}_2(3)&= \Phi(A B M)- \Phi(BL)\\
Sym^{1,1}_3(3)&= \Phi(C M)
\end{align*}
and the collection of polynomials
\begin{align*}
L_n&:=Sym_{n-1}^{0,0}(n)\\
M_n&:=Sym_{n-1}^{n-1,n-1}(n)
\end{align*}
for $n>2$.
\end{thm}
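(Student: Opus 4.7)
The plan is to realise $\RB^{form}_\ast$ as the span inside $\RB'_\ast$ of those basis polynomials $Sym^{p,q}_k(n)$ of the previous lemma that are not killed by condition (B4), and then to verify that the claimed generators (in both $\Z[A,B,C,L,M]/I$ form and basis form) exactly span this subspace and cannot be shortened. First, I would read off from (B4) exactly which basis elements vanish in $\RB^{form}_\ast$. Using the $(\Z/2)^2$-symmetry (conjugation together with Serre), the condition $b_1^{1,1}=0$ kills $Sym^{1,1}_1(n)$ for every $n\geq 1$, while $b_n^{n,p}=0$ for $p>0$ kills the basis elements $Sym^{\ell,0}_n(n)$ for $0\leq \ell\leq n-1$ (after translating $(n,p,n)$ into the basis indexing range via the symmetries). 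The remaining $Sym^{p,q}_k(n)$ form a basis of $\RB^{form}_n$.

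Next I would verify that each proposed generator lies in $\RB^{form}_\ast$. For $A, B, C, L$ this is immediate from Remark \ref{rem: generators for RB up to degree 2}, and a direct monomial expansion gives $CM=Sym^{1,1}_3(3)$ and $ABM-BL=Sym^{2,1}_2(3)$. For the infinite families, writing $M=h(1+xy)z$ and $L=h(1+x^2y^2h^2)z^2$ and using the binomial theorem, both $M^{n-2}L$ and $AM^{n-1}$ expand as $\Z$-combinations of $Sym^{k,k}_{\ast}(\cdot)$-type elements, none of which carry the forbidden indices $(1,1,1)$ or $(\ell,0,n)$.

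Generation is then proved by induction on $n$. For $n\leq 2$ the generators $A, B, C, L$ already span $\RB^{form}_n$. For $n=3$ one checks the identities
\[
L_3 = ML - AC,\qquad M_3 = AM^2 - ML - AC,
\]
which together with $ABM - BL = Sym^{2,1}_2(3)$, $CM = Sym^{1,1}_3(3)$ and the degree-$3$ products of lower-degree generators span every allowed basis element. For $n\geq 4$ I would mimic the inductive scheme of Theorem \ref{thm: surjection to RB'}: any interior basis element $Sym^{p,q}_k(n)$ with $1\leq q\leq p\leq k-1$ factors as $C\cdot Sym^{p-1,q-1}_{k-2}(n-2)$ and is produced by the induction hypothesis; the boundary elements with $q=0, k<n$ or $p=k, k<n$ are handled by multiplying lower-degree generators by $A$ and $B$ exactly as in the earlier proof; and the new $L_n, M_n$ are extracted from $M^{n-2}L$ and $AM^{n-1}$ modulo lower-degree products through recurrences of the form $M^{n-2}L\equiv L_n\pmod{C\cdot\RB^{form}_{<n}}$ and analogously for $AM^{n-1}$. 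Finally, non-finite generation follows from the observation that the pair of monomials $h^{n-1}z^n$, $x^ny^nh^{n+1}z^n$ appearing in $L_n$ (and the pair in $M_n$) cannot be produced by a product of generators each of degree strictly less than $n$: tracking the $h$-exponents across any such product shows the necessary bidegrees are unreachable without one of the $AM^{n+1}$ or $M^nL$ generators.

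The main obstacle will be the inductive step for generation. The proof of Theorem \ref{thm: surjection to RB'} used $M$ as a standalone generator at several places, and for each such use one must substitute a combination involving $ABM-BL$, $CM$, $AM^{n+1}$ or $M^nL$ while checking that no forbidden basis element is reintroduced. The bookkeeping of matching Sym-decompositions of $M^{n-2}L$ and $AM^{n-1}$ against the recursive formulas for $L_n$ and $M_n$ is where most of the combinatorial work lies.
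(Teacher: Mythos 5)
Your identification of the basis elements of $\RB^{form}_n$ (those $Sym^{p,q}_k(n)$ not killed by (B4)) is correct, and your explicit degree-$3$ identities
\[
L_3 = \Phi(ML)-\Phi(AC),\qquad M_3 = \Phi(AM^2)-\Phi(ML)-\Phi(AC)
\]
check out. But the inductive generation step has a concrete gap. You propose that each ``interior'' element $Sym^{p,q}_k(n)$ with $1\leq q\leq p\leq k-1$ is handled by the factorization $Sym^{p,q}_k(n)=\Phi(C)\cdot Sym^{p-1,q-1}_{k-2}(n-2)$ together with the induction hypothesis. This fails precisely for the elements $Sym^{p,1}_n(n)$ with $1\leq p\leq n-2$: here $k=n$, the degree drops to $n-2$, and the reduced element $Sym^{p-1,0}_{n-2}(n-2)$ has $k'=n-2$ equal to its degree and $q'=0$, so it is \emph{exactly} of the form $Sym_{n'}^{\ell,0}(n')$ with $\ell<n'$ that (B4) eliminates. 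It therefore lies outside $\RB^{form}_{n-2}$, and the induction hypothesis gives you nothing. For $n=3$ you patch this by including $CM=Sym^{1,1}_3(3)$ as a named generator, but for $n\geq 4$ the elements $Sym_n^{1,1}(n),\dots,Sym_n^{n-2,1}(n)$ still need a route, and neither $\Phi(AM^{n-1})$ nor $\Phi(LM^{n-2})$ hits them directly (both expand as sums of diagonal $Sym_{n-1}^{i,i}(n)$, $k=n-1\neq n$). This is where ``bookkeeping'' becomes an actual missing argument rather than a nuisance.

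The paper avoids this entirely by a structural rank count. It introduces the subring $S\subseteq\Z[A,B,C,M,L]$ of polynomials whose coefficients of $A^aM$ and $B^bM^d$ ($d\geq 1$) vanish, shows $\overline S$ is generated by the claimed list, verifies $\Phi(S)\subseteq\RB^{form}_\ast$ by the explicit binomial expansions of $\Phi(AM^n)$ and $\Phi(LM^n)$, and then observes that $\Z[A,B,C,M,L]_n/S_n$ and $\RB'_n/\RB^{form}_n$ are free of the same rank; the five lemma then forces $\Phi(S)=\RB^{form}_\ast$ with no element-by-element realization needed. If you want to keep a constructive induction, you must either borrow this rank argument to conclude surjectivity after checking inclusion, or find an explicit expression for each $Sym_n^{p,1}(n)$ ($1\leq p\leq n-2$) as a polynomial in the proposed generators, which is genuinely more work than a re-run of the scheme from Theorem \ref{thm: surjection to RB'}.

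Two smaller points: for $n=1$ the element killed by (B4) is $Sym_1^{0,0}(1)=\Phi(M)$ itself, not $Sym_1^{1,1}(1)$ (which falls outside the canonical indexing range at $n=1$), so the bookkeeping of killed elements needs a separate count at $n=0,1$; and the ``non-finite generation'' observation you sketch is true but belongs to the subsequent Remark in the paper, not to the theorem being proved.
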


\begin{rem}
	The above theorem presents one infinite generating set. A priori it may still be possible to choose a finite collection of generators for $\RB^{form}_\ast$. However, such a collection would contain an element of maximal degree, and one may verify that $L_n$ and $M_n$ do not lie in the ideal generated by elements of degree $<n$.
\end{rem}

\begin{proof}
Recall that $\RB^{form}_\ast\subseteq \RB'_\ast$ is defined by
\[
\RB^{form}_\ast=\left\{\sum b_k^{p,q}(n)\cdot Sym^{p,q}_k(n)\in \RB'_n\mid b^{p,0}_n(n)=b_1^{1,1}(n)=0~\forall p<n\right\}\subseteq \RB'_\ast
\]

and that $\Phi$ gives a surjective map
\[
\Phi: \Z[A,B,C,M,L]\longrightarrow \RB'_\ast.
\]
We are going to compute the inverse image of $\RB^{form}_\ast$ under this map. Denote
\[
S:=\left\{\sum_{a,b,c,d,e\in\Z_{\geq 0}} c_{a,b,c,d,e}A^aB^bC^cM^dL^e\mid c_{a,0,0,1,0}=c_{0,b,0,d,0}=0~\forall a,b,d-1\geq 0\right\}
\]
and define $\overline{S}$ to be the reduction of $S$ modulo $I$. The set $S$ (and a fortiori $\overline{S}$) is a graded subring of $Z[A,B,C,M,L]$ (resp. $\Z[A,B,C,M,L]/I$).\medskip

\textbf{Claim:}
\textit{$\overline{S}$ is generated by $A,B,C,L,ABM,CM$ and the collection $AM^{n+1}, M^nL$ for $n\in\Z_{\geq 1}$.}\medskip

Indeed, the (images of) the monomials $A^aB^bC^dM^dL^e$ which do not violate the defining conditions of $S$ generate $\overline S$. Fix such a monomial. If $d=0$, it is a product of $A,B,C$ and $L$.\medskip

If $d\geq 1$ and $e\geq 1$, it is a product of $A,B,C,L$ and some $M^n L$.\medskip

If $e=0$ and $d=1$, either $c\geq 1$ or $a\neq 0\neq b$. In the first case we have a product of $A,B,C, CM$, in the second case a product of $A,B$ and $ABM$.\medskip

If $e=0$ and $d\geq 2$, either $c=0$, in which case $a\geq 1$ and we have a product of $A,B$ and $AM^d$. Or $c\geq 1$ and, using that we work modulo $I$,
\begin{align*}
A^aB^bC^c M^d\equiv A^aB^b C^{c-1}(AM^{d-1}L-L^2M^{d-2}-CA^2M^{d-2}+4C^2 M^{d-2}).
\end{align*}
The first three summands in the bracket are multiples of the claimed generators, so one may inductively reduce to the case $d\leq 1$, where the claim has been proven.\medskip

\textbf{Claim:} 
$\Phi(S)\subseteq \RB^{form}_\ast$.\medskip

Since $\Phi$ factors through reduction modulo $I$, this may be checked on the generators for $\overline{S}$, the nontrivial cases being $AM^{n+1}$ and $M^nL$. There we have

\begin{align*}
\Phi(AM^n)	&= Sym^{0,0}_0(1)\cdot [Sym_1^{0,0}(1)]^n\\
			&= (1+xyh^2)z\cdot [(h+xyh)z]^n\\
			&= (1+xyh^2)z\cdot \left(\sum_{i=0}^n\binom{n}{i}x^iy^i h^n z^n \right)\\
			&= \sum_{i=0}^n\binom{n}{i} Sym_n^{i,i}(n+1)&\in \RB_{n+1}'
\end{align*}
and
\begin{align*}
\Phi(LM^n)	&= Sym^{0,0}_1(2)\cdot[Sym_1^{0,0}(1)]^n\\
				&= (h+x^2y^2h^3)z^2\cdot[(h+xyh)z]^n\\
				&= (h+x^2y^2h^3)z^2\cdot \left(\sum_{i=0}^n\binom{n}{i}x^iy^i h^n z^n \right)\\
				&= \sum_{i=0}^n\binom{n}{i} Sym_{n+1}^{i,i}(n+2)&\in \RB_{n+2}'.
			\end{align*}

By staring for a moment at the defining conditions for $S_n$ and $\RB^{form}_n$, one arrives at the following\medskip

\textbf{Observation:} For each $n$, the modules $R_n:=\Z[A,B,C,M,L]_n/S_n$ and $\RB'_n/\RB^{form}_n$ are free of the same rank, which equals $n$ for $n=0,1$ and $n+1$ otherwise.\medskip

Since $\Phi$ surjects onto $\RB'_\ast$, the cokernels $R_n\cong \RB'_n/\RB^{form}_n$ are isomorphic. Hence, applying the five-lemma to the map of short exact sequences
	\[
	\begin{tikzcd}
	0\ar[r]&S_n\ar[r]\ar[d]&\Z[A,B,C,M,L]_n\ar[r]\ar[d,]&R_n\ar[r]\ar[d]&0\\
	0\ar[r]&\RB_n^{form}\ar[r]&\RB'_n\ar[r]&\RB'_n/\RB^{form}_n\ar[r]&0,
	\end{tikzcd}
	\]
one obtains surjectivity of $\Phi$ from $S$ to $\RB^{form}_\ast$ and therefore an isomorphism $S/(S\cap I)\cong \RB^{form}_n$.\medskip

To complete the proof, we have to show that instead of $\Phi(AM^n)$ and $\Phi(LM^n)$, we might just as well take $L_n=Sym^{0,0}_{n-1}(n)$ and $M_n=Sym^{n-1,n-1}_{n-1}(n)$ as generators. In fact, by the formula for $\Phi(LM^n)$ above, we have
\[
\Phi(LM^n)-L_{n+2}=\Phi(C)\cdot\sum_{i=1}^n\binom{n}{i} Sym_{n-1}^{i-1,i-1}(n),
\]	
and the all summands in the sum on the right are again in $\RB'_n$, so the claim follows by induction. Similarly, 
\begin{align*}
\Phi(AM^n)-L_{n+1}-M_{n+1}&=\sum_{i=1}^{n-1}\binom n i Sym_n^{i,i}(n+1)\\
							&=\Phi(C)\cdot\sum_{i=1}^{n-1}\binom n i Sym_{n-2}^{i-1,i-1}(n-1).
\end{align*}
 \end{proof}

\begin{rem}\label{rem: generators for RB}
We have already seen in Remark \ref{rem: generators for RB up to degree 2} that $\RB_{\leq 2}=\RB_{\leq 2}^{form}$. We will also exhibit manifolds realizing all generators in degree $3$, all $L_n$, and the $M_n$ for $n$ odd. In particular, this shows that not only $\RB^{form}_\ast$ but also $\RB_\ast$ itself cannot be finitely generated. Since some of this discussion overlaps with construction problems appearing in the next sections, we delay it to Section \ref{sec: constructions}.
\end{rem}

\section{Higher pages of the Fr\"olicher spectral sequence}\label{sec: Frolicher numbers}
Instead of only the Dolbeault cohomology, we can also take into account higher pages of the Fr\"olicher spectral sequence
\[
E_1^{p,q}(X)=H^{p,q}_{\delbar}(X)\Longrightarrow H^{p+q}_{dR}(X).
\]
\begin{definition}
	Let $X$ be a compact complex manifold. The \textbf{$r$-th Fr\"olicher numbers} are the integers 
	\[
	e_r^{p,q}(X):=\dim E_r^{p,q}(X).
	\]
\end{definition}
The K\"unneth-formula for Dolbeault cohomology implies a K\"unneth formula for all higher pages of the spectral sequence (see the proof of Proposition \ref{prop: refined Kuenneth}), namely:

\begin{prop}
	Given a product of compact complex manifolds $X\times Y$, the pullback from the factors induces, for any $p,q,r\in\Z$, natural isomorphisms
	\[
	\bigoplus_{\substack{p_1+p_2=p\\q_1+q_2=q}}E_r^{p_1,q_1}(X)\otimes E_r^{p_2,q_2}(Y)\cong E_r^{p,q}(X\times Y).
	\]
\end{prop}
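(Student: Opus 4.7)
The proof runs in parallel to that of Proposition \ref{prop: refined Kuenneth}. First I would consider the morphism of double complexes
\[
\pi_X^* \otimes \pi_Y^* : A_X \otimes_\C A_Y \longrightarrow A_{X \times Y},
\]
where $A_X \otimes_\C A_Y$ carries the tensor-product bigrading $(A_X \otimes A_Y)^{p,q} = \bigoplus_{p_1+p_2=p,\, q_1+q_2=q} A_X^{p_1,q_1} \otimes A_Y^{p_2,q_2}$ and the two commuting differentials $\del\otimes 1 + 1 \otimes \del$ and $\delbar\otimes 1 + 1 \otimes \delbar$ (with the standard Koszul signs). This morphism induces a morphism of Fr\"olicher spectral sequences, and the classical K\"unneth formula for Dolbeault cohomology asserts it is an isomorphism on the $E_1$-page. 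The standard spectral-sequence comparison argument (cf.\ the proof of Proposition \ref{prop: refined Kuenneth}) then yields isomorphisms on every subsequent page,
\[
E_r^{p,q}(A_X \otimes A_Y) \overset{\cong}{\longrightarrow} E_r^{p,q}(A_{X \times Y}) = E_r^{p,q}(X \times Y),
\]
for all $r \geq 1$.

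The second step is to identify the left-hand side with the claimed tensor product. For this one invokes an algebraic K\"unneth formula for the spectral sequences of tensor products of double complexes over a field: page by page, the differential $d_r$ on $E_r(A_X \otimes A_Y)$ is the graded Leibniz extension of the differentials on $E_r(A_X)$ and $E_r(A_Y)$, so that, using exactness of $\otimes_\C$, one obtains inductively
\[
E_r^{p,q}(A_X \otimes A_Y) \;\cong\; \bigoplus_{\substack{p_1+p_2=p\\q_1+q_2=q}} E_r^{p_1,q_1}(A_X) \otimes_\C E_r^{p_2,q_2}(A_Y).
\]
The base case $r=1$ is the classical algebraic K\"unneth for the tensor product of the $\delbar$-complexes; the inductive step uses that passage to $d_r$-cohomology commutes with tensor product of complexes of $\C$-vector spaces.

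The main work is the bookkeeping in this inductive argument: verifying that under the K\"unneth identification on $E_r$ the differential $d_r$ corresponds to the Leibniz differential, so that after taking cohomology one again gets a tensor-product description on $E_{r+1}$ compatible with the one on $E_r$. This is essentially formal but somewhat involved, and it is where the hypothesis that we work over a field is essential. Once this compatibility is in place, composing the two steps yields the proposition, the inverse isomorphism being given on representatives by the exterior product of classes pulled back from the two factors.
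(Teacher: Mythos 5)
Your proof is correct and follows the same route the paper intends: the paper's proof consists entirely of a pointer to the proof of Proposition~\ref{prop: refined Kuenneth}, which produces the $E_1$-isomorphism $\pi_X^*\otimes\pi_Y^*: A_X\otimes A_Y\to A_{X\times Y}$ and propagates it to all pages, while leaving the algebraic K\"unneth formula $E_r(A_X\otimes A_Y)\cong E_r(A_X)\otimes E_r(A_Y)$ for tensor products of filtered complexes over a field implicit. Your second step correctly and explicitly supplies that remaining (standard, but nontrivial) ingredient.
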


Hence there is a homomorphism from the graded ring of isomorphism classes of compact complex manifolds
\[
e_r: \CM_\ast\longrightarrow \Z[x,y,z],
\] 
induced by sending an $n$-dimensional manifold $X$ to its $r$-th Fr\"olicher polynomial
\[
X\longmapsto e_r(X):=\sum_{p,q=0}^ne_r^{p,q}(X)x^py^qz^n.
\]
Denote the image of this map by $\cH^r_\ast$. In Section \ref{sec: Hodge, Betti, Chern}, we have considered the case $\cH^1_\ast=\cH_\ast$. Because the higher pages of the Fr\"olicher spectral sequence also satisfy Serre-duality (see the proof of Proposition \ref{prop: obvious linear relations between refined Betti numbers} or \cite{stelzig_structure_2018}, \cite{popovici_higher-page_2020-1}, \cite{milivojevic_another_2020-1}), $\cH^r_\ast$ is contained in $\cH^{form}_\ast=\cH_\ast$. Furthermore, we have seen that $\cH^{form}_\ast$ is generated by manifolds for which the Fr\"olicher spectral sequence degenerates. For such manifolds, the Hodge polynomial coincides with the $r$-th Fr\"olicher polynomial for any $r\geq 1$. Hence, we obtain the following immediate generalisation of Theorems \ref{thmintro: HDRC rels} and \ref{thmintro: bimeromorphic invs}:

\begin{thm}
	For any $r\geq 1$, there is an equality $\cH^r_\ast=\cH_\ast$. In particular, for any fixed $r\geq 1$, there are no universal linear relations between the $r$-th Fr\"olicher numbers of compact complex manifolds other than the ones induced by Serre duality.
\end{thm}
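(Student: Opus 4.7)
My plan is to argue by a two-sided inclusion, both sides of which are essentially already in hand from results earlier in the paper.

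First, for the inclusion $\cH^r_\ast \subseteq \cH_\ast$, I would invoke the fact that every page of the Fr\"olicher spectral sequence carries a Serre duality $E_r^{p,q}(X) \cong E_r^{n-p,n-q}(X)^\vee$ (cited just before the statement). Consequently every element in the image of $e_r$ is a formal Hodge polynomial, i.e.\ lies in $\cH^{form}_\ast$. By Theorem \ref{thm: H=Hform} this formal ring equals $\cH_\ast$, so $\cH^r_\ast \subseteq \cH^{form}_\ast = \cH_\ast$.

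For the reverse inclusion $\cH_\ast \subseteq \cH^r_\ast$, I would use the explicit generators exhibited in Theorem \ref{thm: H=Hform}: $\CP^1$, an elliptic curve $E$, a K\"ahler surface of signature $\pm 1$ such as $\CP^2$, and a non-K\"ahler surface (e.g.\ a Hopf surface). For each of these, the Fr\"olicher spectral sequence degenerates at $E_1$: curves and K\"ahler manifolds by the usual Hodge theory, and arbitrary compact complex surfaces by the classical fact already recalled in point (c) of Theorem \ref{thmintro: HDRC rels}. For any $X$ with degenerate Fr\"olicher spectral sequence we have $E_r^{p,q}(X) = E_1^{p,q}(X) = H^{p,q}_{\delbar}(X)$ for all $r \geq 1$, so $e_r(X) = h(X)$. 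Since $e_r$ is a ring homomorphism and its values agree with those of $h$ on a generating set of $\cH_\ast$, we conclude $\cH^r_\ast \supseteq \cH_\ast$.

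The statement about universal linear relations is then immediate by the same reasoning as in Remark \ref{rem: Thm A for Hodge}: the subspace $\cH^r_n \subseteq \Z[x,y,z]_n$ cut out only by Serre duality is exactly $\cH^{form}_n$, and any further universal relation would cut out a strictly smaller subspace containing $\cH^r_n$, contradicting the equality $\cH^r_\ast = \cH^{form}_\ast$ just established. There is no real obstacle to this argument; the only mild point to be careful about is that one must pick the generators of $\cH_\ast$ so that they genuinely have $E_1$-degenerate Fr\"olicher spectral sequence, which in dimension $\leq 2$ is automatic.
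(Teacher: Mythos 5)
Your argument is correct and matches the paper's own proof almost verbatim: both directions rest on Serre duality for higher pages (giving $\cH^r_\ast \subseteq \cH^{form}_\ast$) and on the generators of $\cH_\ast = \cH^{form}_\ast$ lying in dimension $\leq 2$, where the Fr\"olicher spectral sequence degenerates, so that $e_r = h$ on a generating set. The concluding step via Remark \ref{rem: Thm A for Hodge} is also exactly the paper's intent.
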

\begin{cor}
	For any $r\geq 1$, the only linear combinations between $r$-th Fr\"olicher numbers which are bimeromorphic invariants are (modulo Serre duality) linear combinations of the $e_r^{p,0}$ and $e_r^{0,q}$ only.
\end{cor}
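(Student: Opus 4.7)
My plan is to mimic the proof of Theorem \ref{thmintro: bimeromorphic invs} with the Hodge polynomial $h$ replaced by the $r$-th Fr\"olicher polynomial $e_r$. Concretely, interpret a $\Z$-linear combination of $r$-th Fr\"olicher numbers of $n$-dimensional manifolds as a functional on $\cH^r_n \subseteq (\Z[x,y,z])_n$; being a universal bimeromorphic invariant amounts to vanishing on the ideal $\cB^r \subseteq \cH^r_\ast$ generated by differences $e_r(X) - e_r(Y)$ with $X,Y$ bimeromorphic. Since the preceding theorem gives $\cH^r_\ast = \cH_\ast$, the goal reduces to identifying $\cB^r$ with the principal ideal $(C)\subseteq \cH_\ast$ that already appeared in Theorem \ref{thm: bimeromorphic invar. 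Hodge Chern}.

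The inclusion $(C)\subseteq \cB^r$ is immediate: the bimeromorphic surfaces $\CP^2$ and $\CP^1\times\CP^1$ are K\"ahler, so their Fr\"olicher spectral sequences degenerate at $E_1$ and $e_r$ coincides with $h$ on them. Hence
\[
e_r(\CP^1\times\CP^1) - e_r(\CP^2) = h(\CP^1\times\CP^1) - h(\CP^2) = C,
\]
and since $\cB^r$ is an ideal the claim follows.

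For the reverse inclusion $\cB^r \subseteq (C) = \ker p$, where $p:\cH_\ast\to\Z[x,y,z]/(xy)$ is the projection recording exactly the axis entries $e_r^{p,0}$ and $e_r^{0,q}$ of the Hodge diamond, one must show that $e_r^{p,0}$ and $e_r^{0,q}$ are bimeromorphic invariants for every $p,q$ and every $r\geq 1$. For $r=1$ this is the classical bimeromorphic invariance of $h^{p,0}$ and $h^{0,q}$ via Hartogs' theorem on extension of holomorphic forms. For general $r$ it can be deduced from the bimeromorphic invariance of the full columns $E_r^{\Cdot,0}$ and rows $E_r^{0,\Cdot}$ of the Fr\"olicher spectral sequence, which in turn follows from the blow-up behavior of the double complex of forms worked out in \cite{stelzig_structure_2018} together with a weak factorization argument (available in the projective algebraic case and more broadly for bimeromorphisms of compact complex manifolds under suitable hypotheses).

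I expect the main obstacle to be precisely this last step: establishing bimeromorphic invariance of $e_r^{p,0}$ and $e_r^{0,q}$ for intermediate $2\leq r<\infty$. The cases $r=1$ (holomorphic forms) and $r=\infty$ (the pieces $F^pH^p_{dR}$ and $\Fbar^qH^q_{dR}$) are classical, but the intermediate pages need a more careful functoriality argument for modifications. Once this invariance is granted, the conclusion is formal: any functional vanishing on $\cB^r=(C)$ factors through $\cH_\ast/(C)$, which by part $1$ of Theorem \ref{thm: bimeromorphic invar. Hodge Chern} is freely generated by the monomials $x^pz^n$, $y^qz^n$ and $(x^n+y^n)z^n$, corresponding to the linear combinations of $e_r^{p,0}$ and $e_r^{0,q}$ modulo the Serre symmetry $e_r^{p,q}=e_r^{n-p,n-q}$.
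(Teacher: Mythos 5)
Your proposal is correct and is essentially the argument the paper has in mind (the corollary is presented there as an ``immediate generalisation'' of Theorem~\ref{thmintro: bimeromorphic invs}, i.e.\ as the same reduction with $h$ replaced by $e_r$, made possible by the preceding theorem $\cH^r_\ast=\cH_\ast$). The inclusion $(C)\subseteq\cB^r$ via K\"ahler surfaces is exactly right, and the formal conclusion from $\cH_\ast/(C)$ and part~$1$ of Theorem~\ref{thm: bimeromorphic invar. Hodge Chern} is as you state.

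The step you flag as the ``main obstacle'' --- bimeromorphic invariance of $e_r^{p,0}$ and $e_r^{0,q}$ for intermediate $r$ --- is not actually delicate once one invokes the right input, and there is no special difficulty for $2\leq r<\infty$ compared to $r=1$ or $r=\infty$. The relevant fact is the blow-up formula for the $E_1$-isomorphism class of $A_X$, which the paper records (in Lemma~\ref{lem: ideal bimeromorphic diffs}) with the reference \cite{stelzig_double_2019} rather than \cite{stelzig_structure_2018}: for a blow-up $\tilde X\to X$ along a submanifold $Z$ of codimension $c$, one has $[\tilde X]-[X]\in C\cdot\cU_\ast$, i.e.\ the correction term is concentrated in bidegrees $(p,q)$ with $p,q\geq 1$. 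Since $e_r^{p,0}$ and $e_r^{0,q}$ only depend on the boundary rows/columns of the Hodge square, they are unchanged by such a correction \emph{for every $r$ simultaneously}. Combining this with the weak factorisation theorem \cite{abramovich_torification_2002,wlodarczyk_toroidal_2003} --- which holds for all bimeromorphic maps of compact complex manifolds, so no extra hypotheses are needed --- yields $\cB^r\subseteq\ker p=(C)$ and closes your argument. (Your suggested alternative of arguing directly with ``the full rows/columns of $E_r$'' is riskier: incoming $d_s$-differentials from $q\geq 1$ can hit $E_s^{p,0}$, so the $q=0$ row of $E_r$ is \emph{not} determined by the holomorphic de Rham complex alone; the blow-up formula sidesteps this cleanly.)
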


Of course, this leaves open the question of what the universal linear relations between Fr\"olicher numbers for different $r$ are. For instance, the following ones are well-known:
{\it
\begin{enumerate}
	\item[{\it(F1)}] Connected components: $e_1^{0,0}=e_r^{0,0}$ for all $r$.
	\item[{\it(F2)}] Euler Characteristics:
	Let \[\chi_p^r(X):=\sum_{q\geq 0} (-1)^q e^{p-2(r-1)q, r q}(X).\] Then $\chi_p^r(X)=\chi_p^{s}(X)$ for all $r,s\geq 1$.
	\item[(F3)] No differential ends in degree $(n,0)$, i.e. $e_1^{n,0}(X)=e_r^{n,0}(X)$ for all $r$.
	\item[(F4)] No differential starts in degree $(0,0)$, i.e. $e_1^{0,0}(X)=e_r^{0,0}(X)$ for all $r$.
	\item[(F5)] For $\dim_\C X=n$, the Fr\"olicher spectral sequence degenerates latest at the $n$-th page, i.e. $e_n^{p,q}(X)=e_{n+k}^{p,q}(X)$ for all $k\geq 0$. (This is obtained from the trivial degeneration bound by dimension combined with {\it (F3)}.)
	\item[(F6)] For $\dim_\C X\leq 2$, the Fr\"olicher spectral sequence degenerates at the first page, i.e. $e_1^{p,q}(X)=e_{2}^{p,q}(X)$ (in addition to {\it (F5)}).
\end{enumerate}
}
As before, one obtains a map
\[
E: \CM_\ast\longrightarrow (\Z[x,y])^\N[z]
\]
induced by collecting all Fr\"olicher polynomials:
\[
X\mapsto \left(\sum_{p,q=0}^n e_r^{p,q}(X)x^py^q\right)_{r\geq 1}\!\!\!\!\!\!\!\cdot z^{\dim_\C X}
\]
Denote its image by $\mathcal{FS}_\ast$, the \textbf{Fr\"olicher ring} of compact complex manifolds.
\begin{rem}\label{thm: FS not fin gen}
	The ring $\mathcal{FS}_\ast$ is not finitely generated.
\end{rem}
\begin{proof}
	If there was a finite generating set, say $X_1,...X_k$, then by property $5$ above, all coefficients $(P_1,...,P_r,...)\in(\Z[x,y])^\N$ of elements in $\mathcal{FS}_\ast$ would satisfy $P_r=P_{r'}$ whenever $r,r'\geq N:=\max\{\dim_\C X_i\}$. In other words, there would be a  number $N$ such that the Fr\"olicher spectral sequence of all compact complex manifolds, regardless of their dimension, degenerates at stage $N$. This is known to be false, see \cite{bigalke_erratum_2014}.
\end{proof}
The conditions {\it (F1) -- (F6)}, together with Serre duality on each page, describe a natural candidate $\mathcal{FS}^{form}_\ast$ for the image. One could now proceed, as in the previous section, to compute generators for $\mathcal{FS}^{form}_\ast$ and try to realize them by compact complex manifolds. We do not spell this out for two reasons: On the one hand, the conditions defining $\FS_\ast^{form}$ are a bit technical and we believe the description of the universal ring of cohomological invariants in the next section is the better framework to incorporate the Fr\"olicher numbers. On the other hand, we are quite far at the moment from actually realizing the generators by manifolds, and, without that step, the formal work does not seem to be rewarding. See Section \ref{sec: constr. probs} for a summary of the open construction problems.

\section{The universal ring of cohomological invariants}\label{sec: univ ring}
In this section we connect the previous results with the theory developed in \cite{stelzig_structure_2018} and define a ring encoding `all' cohomological invariants. The following structure theorem will underlie much of the discussion:

\begin{thm}[\cite{khovanov_faithful_2020},\cite{stelzig_structure_2018}]\label{thm: structure dc}$\,$
	\begin{enumerate}
	\item Any bounded double complex of $\C$-vector spaces can be decomposed into a direct sum of indecomposable double complexes. The multiplicity with which an (isomorphism class of an) indecomposable double complex occurs is the same for any such decomposition.
	\item Every indecomposable double complex is isomorphic to one of the following:
	\begin{enumerate}
		\item squares
			\[\begin{tikzcd}
			\C\ar[r,"\sim"]&\C\\
			\C\ar[u,"\sim"]\ar[r,"\sim"]&\C\ar[u,"\sim"]
			\end{tikzcd}\]
	
		\item even length zigzags
		\[
		\begin{tikzcd}
			\C\ar[r,"\sim"]&\C
		\end{tikzcd}\quad,\quad  
		\begin{tikzcd}
		\C\\
		\C\ar[u,"\sim"]
		\end{tikzcd}\quad,\quad 
		\begin{tikzcd}
		\C \ar[r,"\sim"] & \C \\
		& \C \ar[u,"\sim"] \ar[r,"\sim"] & \C
		\end{tikzcd}\quad ,\quad  ...
		\]
		\item odd length zigzags
		\[
		\C\quad ,\quad 
		\begin{tikzcd}
		\C\ar[r,"\sim"]&\C\\
		&\C\ar[u,"\sim"]
		\end{tikzcd}\quad , \quad 
		\begin{tikzcd}
		\C&\\
		\C\ar[r,"\sim"]\ar[u,"\sim"]&\C
		\end{tikzcd}
		\quad , \quad ...
		\]
\end{enumerate}
	\item For $X$ a compact complex manifold and $A_X$ its double complex of $\C$-valued forms, in any decomposition $A_X=A_X^{sq}\oplus A_X^{zig}$ into a summand consisting of squares and one consisting of zigzags, the latter is finite dimensional.
	\end{enumerate}
\end{thm}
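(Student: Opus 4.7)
I would view a bounded double complex $(C,\partial,\delbar)$ as a bigraded module over the graded-commutative algebra $\Lambda = \C[\partial,\delbar]$ with $\partial,\delbar$ of bidegrees $(1,0),(0,1)$ and relations $\partial^2=\delbar^2=\partial\delbar+\delbar\partial=0$. Parts $(1)$ and $(2)$ are then purely representation-theoretic statements about $\Lambda$-modules supported in a region bounded in the difference of coordinates. Part $(3)$ is a geometric consequence of finiteness of Dolbeault cohomology.

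\textbf{Classification of indecomposables (Part 2).} Given an indecomposable bounded double complex $C$, I would look at its bigraded support as a graph with edges given by nonzero $\partial$ and $\delbar$. Each vertex has at most two $\partial$-edges and two $\delbar$-edges incident (one incoming, one outgoing for each), so the support is a union of ``ladder/zigzag" patterns. If there is a 4-cycle (i.e., both a $\partial$ and a $\delbar$ leaving some vertex), the anticommutation relation and $\partial^2=\delbar^2=0$ force, after choosing adapted bases and splitting off kernels/cokernels of the differentials, the extraction of a square summand with all entries one-dimensional and all maps isomorphisms; by indecomposability $C$ is exactly that square. Otherwise the support is a path whose edges alternate between $\partial$ and $\delbar$ arrows, and a further base-change argument shows each nonzero arrow is a scalar isomorphism between one-dimensional pieces—i.e. a zigzag. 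Even versus odd length zigzags correspond to whether the path has an even or odd number of vertices.

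\textbf{Krull--Schmidt (Part 1).} For existence, I would functorially split off a maximal acyclic (``square") subcomplex: iteratively, any length-$2$ subcomplex $\C\xrightarrow{\sim}\C$ in some row or column such that the isomorphism extends to a commuting square can be separated as a direct summand. The remainder has vanishing row and column cohomology only on the ``zigzag part'' and is decomposable into zigzags by a similar bidegree-by-bidegree extraction. For uniqueness, observe that each indecomposable (square or zigzag) has endomorphism ring isomorphic to $\C$ (a scalar at one vertex propagates along all isomorphisms), hence local. Azumaya's theorem then guarantees that the multiplicities of each indecomposable in any decomposition are invariants of $C$. Alternatively, one checks that the multiplicities can be read off from ranks of induced maps on the various pages of the Frölicher spectral sequence (row-first and column-first), both of which are computed from $C$ itself.

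\textbf{Finiteness of the zigzag part (Part 3).} For $X$ compact, $H^{p,q}_{\delbar}(X)$ is finite-dimensional for each $(p,q)$. In any decomposition $A_X = A_X^{sq}\oplus A_X^{zig}$, the square summands are acyclic for $\delbar$ in each column, so contribute zero to Dolbeault cohomology, while each zigzag contributes a specific nonzero pattern (roughly $\lceil\ell/2\rceil$ one-dimensional contributions for a length-$\ell$ zigzag). Any zigzag in $A_X$ lies in the bidegree range $[0,n]\times[0,n]$ with $n=\dim_\C X$, so only finitely many shapes occur, and each shape appears with multiplicity bounded by the finite dimensions of the $H^{p,q}_{\delbar}(X)$. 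Hence $A_X^{zig}$ is finite-dimensional. The main obstacle I expect is the functorial extraction of squares in the Krull--Schmidt step, since $A_X$ is infinite-dimensional in each bidegree and classical Krull--Schmidt does not apply directly; the argument has to be done either through Azumaya via the local endomorphism rings, or by explicitly identifying multiplicities with invariants of cohomological functors in the sense of the later sections of the paper.
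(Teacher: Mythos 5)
First, note that the paper does not prove Theorem \ref{thm: structure dc}: it is quoted from \cite{khovanov_faithful_2020} and \cite{stelzig_structure_2018}, so your proposal has to be measured against those proofs rather than against an argument in the text. Your overall architecture --- existence of a square/zigzag decomposition, uniqueness via local endomorphism rings and Azumaya, finiteness from finite-dimensional cohomology --- is reasonable, and the observation that squares and zigzags have endomorphism ring $\C$ is correct, so the uniqueness half would indeed follow from Azumaya once existence is known (the cited reference instead obtains uniqueness by expressing the zigzag multiplicities through computable invariants, namely refined Betti numbers and spectral-sequence ranks, which is also what the rest of the present paper uses). The problem is that the two crucial steps are asserted rather than proved. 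For part 2, the ``support as a graph'' argument does not get started: an indecomposable bounded double complex a priori has entries of arbitrary (possibly infinite) dimension and can have full rectangular support, so ``each vertex has at most two incident edges, hence the support is a zigzag pattern'' presupposes exactly the one-dimensionality the theorem is supposed to establish. In the references, part 2 is deduced from the existence statement in part 1, and that existence proof --- choosing, bidegree by bidegree, complements adapted simultaneously to $\del$ and $\delbar$ (first splitting off squares, then decomposing the square-free part by lifting bases of the various cohomologies), with extra care because $A_X$ is infinite-dimensional in each bidegree so no induction on dimension is available --- is precisely the step you defer; flagging it as ``the main obstacle'' does not discharge it.

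Part 3 moreover contains a concrete error: it is not true that every zigzag contributes nontrivially to Dolbeault cohomology. Even-length zigzags whose extremal arrows are vertical --- the simplest being a single vertical isomorphism $\C\xrightarrow{\ \delbar\ }\C$ --- are acyclic for the column (Dolbeault) cohomology and are detected only by the row cohomology $H_{\del}$, equivalently by the conjugate spectral sequence; your count of roughly $\lceil \ell/2\rceil$ contributions also fails for odd zigzags of length at least $3$, whose two endpoint classes split between row and column cohomology. Consequently, bounding multiplicities by $\dim H^{p,q}_{\delbar}(X)$ alone does not bound the zigzag part. The repair is easy and is what the references do: each of the two endpoints of any zigzag contributes a nonzero class to row or to column cohomology, the support of $A_X$ is contained in $\{0,\dots,n\}^2$, and on a compact manifold both $H^{p,q}_{\delbar}(X)$ and $H^{p,q}_{\del}(X)\cong\overline{H^{q,p}_{\delbar}(X)}$ are finite-dimensional; together these yield the finiteness of $A_X^{zig}$.
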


Let us call complexes satisfying the property in point $3$ of Theorem \ref{thm: structure dc} \textbf{essentially finite dimensional}. Denote by $\cD$ the category of essentially finite dimensional bounded double complexes of $\C$-vector spaces with linear maps compatible with bigrading and differentials. Let $\cC$ denote the category of compact complex manifolds. Sending a manifold $X$ to the double complex $A_X$ and a holomorphic map to the pullback of forms yields a functor $A_{\_}: \cC\rightarrow \cD$.
\begin{definition}[c.f. \cite{stelzig_structure_2018}]\label{def: cohomological functors}
	Let $\V$ denote the category of finite dimensional $\C$-vector spaces. 
	\begin{enumerate}
		\item A linear functor $H: \cD\longrightarrow \V$ is called \textbf{cohomological} if it sends direct sums of squares to zero.
		\item A functor $H:\cC\rightarrow \V$ is called \textbf{cohomological} if it factors as $H=H'\circ A_{\_}$ for a cohomological functor $H':\cD\longrightarrow \V$.
		\item For any cohomological functor $H$ on $\cC$ (resp. $\cD$), the composition $\dim \circ H$, considered as a map from isomorphism classes of objects to $\N$, is called a \textbf{cohomological invariant} of compact complex manifolds (resp. of double complexes).
	\end{enumerate}
\end{definition}

\begin{ex}
As mentioned in the introduction, the functors $H_{dR}^k$, $H_{\delbar}^{p,q}$, $H_{\del}^{p,q}$, $H_{BC}^{p,q}$, $H_{A}^{p,q}$, as well as their higher page analogues $E_r^{p,q}$, $\bar{E}_r^{p,q}$, $E_{r,BC}^{p,q}$, $E_{r,A}^{p,q}$, the cohomologies of the Schweitzer complexes, the Varouchas groups and the $H_k^{p,q}$ are all cohomological, so their dimensions yield cohomological invariants. (See \cite{popovici_higher-page_2020}, \cite{stelzig_remarks_2022} for $E_{r,BC}^{p,q}$, $E_{r,A}^{p,q}$ and the Schweitzer complexes. The other cases are easy to check).
\end{ex}

\begin{rem}
	The reader preferring a less phenomenological justification for the name cohomological functor may find comfort in the fact that direct sums of squares are exactly the projective (and injective) objects in $\cD$ (see \cite{khovanov_faithful_2020}).
\end{rem}

\begin{rem}
Because linear functors commute with direct sums, any cohomological invariant $h$ of double complexes can be written as a finite sum of multiplicities of zigzags, i.e. 
\[
h(\_)=\sum_{Z\text{ zigzag}}a_{h,Z}\cdot\mult_Z(\_ )\quad\text{ for some }a_{h,Z}\in\N.\]
\end{rem}

The following Lemma, expanding on the previous Remark, was essentially shown in \cite{stelzig_structure_2018}:

\begin{lem} \label{lem: same cohomological invariants}
	Let $X$ and $Y$ be $n$-dimensional compact complex manifolds. The following conditions are equivalent:
	\begin{enumerate}
		\item $h(X)=h(Y)$ for all cohomological invariants $h$.
		\item $e_r^{p,q}(X)=e_r^{p,q}(Y)$ and $b_k^{p,q}(X)=b_k^{p,q}(Y)$ for all $k,r,p,q\in\Z$.
		\item Every isomorphism type of zigzags occurs with the same multiplicity in any decomposition into indecomposables of $A_X$ and $A_Y$.
		\item $A_X$ and $A_Y$ are $E_1$-isomorphic, i.e. there exists a map of double complexes $f:A_X\rightarrow A_Y$ s.t. $H_{\delbar}(f)$ and $H_{\del}(f)$ are isomorphisms. 
	\end{enumerate}
\end{lem}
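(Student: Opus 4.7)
The plan is to establish the cycle $(3) \Rightarrow (1) \Rightarrow (2) \Rightarrow (3)$ and separately the pair $(3) \Leftrightarrow (4)$. Most of these implications are either formal or direct consequences of the structure theorem; the substantive content sits in $(2) \Rightarrow (3)$.

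For $(3) \Rightarrow (1)$, I would invoke Theorem \ref{thm: structure dc} to decompose $A_X \cong A_X^{sq} \oplus A_X^{zig}$ and similarly for $Y$. Any cohomological functor $H$ is additive and vanishes on squares by definition, so $H(A_X) \cong \bigoplus_Z H(Z)^{\oplus \mult_Z(A_X)}$, the sum running over isomorphism classes of zigzags. Hence every cohomological invariant on $X$ depends only on the numbers $\mult_Z(A_X)$, which by hypothesis agree with those of $Y$. The implication $(1) \Rightarrow (2)$ is then immediate, since both $E_r^{p,q}$ and $H_k^{p,q}$ are obtained as compositions of a cohomological functor $\cD \to \V$ with $A_{\_}$; vanishing on squares is a direct check from their definitions (a square is $\del$- and $\delbar$-acyclic and contributes trivially to the Hodge and conjugate Hodge filtrations).

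The main work is $(2) \Rightarrow (3)$. The strategy is to set up an explicit dictionary between zigzag types and the invariants on both sides of the hypothesis. An odd-length zigzag with apex at bidegree $(p,q)$ contributes exactly one dimension to $E_\infty^{p,q}$ and to exactly one piece $H^{p',q'}_{p+q}$ of the refined Betti decomposition, with $(p',q')$ read off from the zigzag's shape (the lengths of its two legs, and whether it leans towards $\del$ or $\delbar$). An even-length zigzag of length $2\ell$ supported at bidegrees determined by $(p,q)$ represents a non-zero component of $d_\ell$ in the Frölicher (or conjugate) spectral sequence, killing one dimension of $e_r^{p,q}$ between pages $r=\ell$ and $r=\ell+1$ and contributing nothing to $E_\infty$ or to the refined Betti numbers. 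Starting from length-one zigzags (detected directly by $b_{p+q}^{p,q}$ together with degeneration at that spot) and inducting on length, one solves uniquely for $\mult_Z(A_X)$ in terms of $\{e_r^{p,q}(X), b_k^{p,q}(X)\}$. Since the right-hand side agrees for $X$ and $Y$, so do the zigzag multiplicities; the full bookkeeping was carried out in \cite{stelzig_structure_2018}.

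For $(3) \Rightarrow (4)$, fix decompositions as above and use $(3)$ to build an isomorphism $\phi: A_X^{zig} \to A_Y^{zig}$, by picking, for each zigzag isomorphism class, an explicit bijection between the summands of $A_X^{zig}$ and $A_Y^{zig}$ of that type. Define $f: A_X \to A_Y$ as $\phi$ on the zigzag part and zero on $A_X^{sq}$; this is a morphism of double complexes, and since squares are acyclic for both $\del$ and $\delbar$ while $\phi$ is an iso, $H_{\del}(f)$ and $H_{\delbar}(f)$ are isomorphisms. For $(4) \Rightarrow (3)$, I would appeal to the fact from \cite{stelzig_structure_2018} that the multiplicities $\mult_Z$ are invariants of the $E_1$-isomorphism type: an $E_1$-iso can be modified by maps factoring through squares (which do not affect $E_1$) to produce an honest isomorphism of zigzag parts. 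The main obstacle is the combinatorial step $(2) \Rightarrow (3)$, where one must exhibit a complete and unambiguous dictionary between zigzag shapes and the pair (Frölicher signature, refined Betti signature) and verify that the induced linear system on multiplicities is invertible.
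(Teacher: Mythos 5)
Your proposal is correct and takes essentially the same route as the paper: $1\Rightarrow 2$ is immediate, $3\Rightarrow 1$ uses additivity of cohomological functors plus vanishing on squares, $2\Leftrightarrow 3$ rests on the dictionary from \cite{stelzig_structure_2018} (odd zigzags counted by refined Betti numbers, even zigzags by Fr\"olicher differentials, whose ranks are in turn determined by the page dimensions $e_r^{p,q}$), and $3\Leftrightarrow 4$ is \cite[Prop.~12]{stelzig_structure_2018}. The only cosmetic difference is that you construct the $E_1$-isomorphism in $3\Rightarrow 4$ explicitly (an isomorphism of zigzag parts extended by zero on the square summand), where the paper simply cites that reference for both directions.
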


\begin{proof}
	Clearly, $1\Rightarrow 2$. By \cite[sect. 2]{stelzig_structure_2018}, the multiplicities of even zigzags are counted by differentials on the Fr\"olicher spectral sequence and the multiplicities of odd length zigzags are counted by the refined Betti numbers. In particular condition $3$ is equivalent to $b_k^{p,q}(X)=b_k^{p,q}(Y)$ and $\dim \im d_r^{p,q}\cap E_r(X)=\dim\im d_r^{p,q}\cap E_r(Y)$ for all $r,p,q\in\Z$, where $d_r^{p,q}$ denotes the differential on the $r$-th page of the Fr\"olicher spectral sequence. Since the complexes appearing on each page of the Fr\"olicher spectral sequence are bounded, and each page can be computed as the cohomology of the previous one, the dimensions of the images of the differentials are determined by the dimensions $e_r^{p,q}$ for all $r,p,q\in\Z$, hence $2\Rightarrow 3$. Since a linear functor commutes with direct sums, it is determined by its values on indecomposable complexes. Thus $3\Rightarrow 1$. The equivalence between $3$ and $4$ is \cite[Prop. 12]{stelzig_structure_2018}. 
\end{proof}

Let $\cI\subseteq \CM_\ast$ be the ideal generated by differences $[X]-[Y]$ of equidimensional manifolds such that $h(X)=h(Y)$ for all cohomological invariants $h$.

\begin{definition}\label{def: univ. ring}
	The quotient $\cU_\ast:=\CM_\ast/\cI$ will be called the \textbf{universal ring of cohomological invariants}.
\end{definition}

By construction, the universal ring of cohomological invariants $\cU_\ast$ has natural surjective maps to the rings $\HDR_\ast$, $\RB_\ast$, and $\FS_\ast$ introduced in sections \ref{sec: Hodge, Betti, Chern}, \ref{sec: refined de Rham ring}, \ref{sec: Frolicher numbers}. 
Since the latter two rings are not finitely generated, we further get:
\begin{cor}
	$\cU_\ast$ cannot be finitely generated.
\end{cor}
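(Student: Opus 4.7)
The plan is very short: deduce non-finite-generation from a quotient that is already known to fail this property. By the definition of $\cU_\ast$ as the quotient of $\CM_\ast$ by the ideal of differences that agree on all cohomological invariants, and because the Frölicher numbers $e_r^{p,q}$ are cohomological invariants (they factor through $A_{\_}$ and vanish on squares), the ring map $\CM_\ast \to \FS_\ast$ descends to a surjective graded ring homomorphism $\cU_\ast \twoheadrightarrow \FS_\ast$. The same reasoning gives a surjection $\cU_\ast \twoheadrightarrow \RB_\ast$.

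First I would observe the elementary fact that finite generation is preserved by surjective ring homomorphisms: if $\cU_\ast$ were generated as a ring by finitely many classes $[X_1],\dots,[X_k]$, then $\FS_\ast$ would be generated by their images, which would still be a finite set. Then I would invoke Remark \ref{thm: FS not fin gen} (or alternatively the statement in Remark \ref{rem: generators for RB}, applied to $\RB_\ast$), which shows that $\FS_\ast$ is not finitely generated — the argument there being that finite generation would bound the dimensions of manifolds whose Frölicher spectral sequence can fail to degenerate, contradicting the examples of \cite{bigalke_erratum_2014}. Combining these two points yields the desired contradiction.

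The only step that requires any care is checking that the map $\cU_\ast \to \FS_\ast$ is actually well-defined, i.e.\ that the defining relation of $\cU_\ast$ (agreement of all cohomological invariants) implies equality of Frölicher polynomials on each page; this is immediate from Lemma \ref{lem: same cohomological invariants}, which in particular lists the $e_r^{p,q}$ among the invariants determining the equivalence class. No further construction, computation, or geometric input is needed, and I do not anticipate any genuine obstacle — the corollary is essentially an immediate consequence of the earlier non-finite-generation results once the surjection $\cU_\ast \twoheadrightarrow \FS_\ast$ is in place.
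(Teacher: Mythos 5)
Your proof is correct and matches the paper's argument: the paper likewise observes that $\cU_\ast$ surjects onto $\FS_\ast$ and $\RB_\ast$, neither of which is finitely generated, and concludes. No material difference.
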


By point $2$ of Lemma \ref{lem: same cohomological invariants}, we obtain:
\begin{cor}
The universal ring of cohomological invariants $\cU_\ast$ is isomorphic to the image of the diagonal map into the product of Fr\"olicher and refined de Rham ring
\[
\CM_\ast\longrightarrow \mathcal{FS}_\ast\times\RB_\ast.
\]
\end{cor}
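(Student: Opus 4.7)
The plan is to identify the induced map $\cU_\ast \to \FS_\ast\times\RB_\ast$ explicitly as an injection, whose image is (by construction) the image of the diagonal. Since the diagonal map $\CM_\ast \to \FS_\ast\times\RB_\ast$ is a ring homomorphism (both factors are, by Proposition \ref{prop: refined Kuenneth} and the Künneth formula for the Frölicher pages), it suffices to show that its kernel is exactly the ideal $\cI$ from Definition \ref{def: univ. ring}.

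One inclusion $\cI\subseteq\ker$ is essentially tautological: if $X$ and $Y$ satisfy $h(X)=h(Y)$ for every cohomological invariant $h$, then in particular the Frölicher dimensions $e_r^{p,q}$ and the refined Betti numbers $b_k^{p,q}$ agree, hence $E(X)=E(Y)$ and $rb(X)=rb(Y)$; multiplicative closure of $\cI$ then follows either directly from the Künneth formulas or from the fact that the kernel of any ring homomorphism is an ideal.

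For the reverse inclusion, I would exploit the identification $[X]+[Y]=[X\sqcup Y]$ imposed in the definition of $\CM_\ast$: any element $\xi\in\CM_n$ can be written as a difference $[U]-[V]$ where $U$ and $V$ are the (a priori disconnected) $n$-dimensional compact complex manifolds obtained as disjoint unions of the summands with positive, resp. negative, coefficients. Assuming $\xi\in\ker$, we get $E(U)=E(V)$ and $rb(U)=rb(V)$, i.e.\ $U$ and $V$ have identical Frölicher dimensions and refined Betti numbers in every tridegree. Invoking the equivalence $(2)\Rightarrow(1)$ of Lemma \ref{lem: same cohomological invariants} (which applies to arbitrary compact complex manifolds, disconnected ones included, since $A_{U\sqcup V}=A_U\oplus A_V$ decomposes compatibly with everything in sight), we conclude that $h(U)=h(V)$ for every cohomological invariant $h$. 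Hence $[U]-[V]\in\cI$ by the very definition of $\cI$, and therefore $\xi\in\cI$.

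There is no serious obstacle here: the whole argument is a packaging of Lemma \ref{lem: same cohomological invariants} together with the disjoint-union identification in $\CM_\ast$. The only point requiring any care is the passage from a general $\Z$-linear combination to a single difference of manifolds, which is why the grading of $\CM_\ast$ (so that $U$ and $V$ have the same dimension) and the quotient by $[X]+[Y]=[X\sqcup Y]$ are essential; without them the argument would require a slightly more elaborate reduction.
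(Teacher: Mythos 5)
Your proposal is correct and follows essentially the same route as the paper, which deduces the corollary directly from point $2$ of Lemma \ref{lem: same cohomological invariants}; your reduction of a homogeneous kernel element to a single difference $[U]-[V]$ of (possibly disconnected) equidimensional manifolds via the relation $[X]+[Y]=[X\sqcup Y]$ is exactly the implicit step needed to identify $\ker(\CM_\ast\to\FS_\ast\times\RB_\ast)$ with $\cI$, and the lemma indeed applies to disconnected manifolds since $A_{U}$ is still an essentially finite-dimensional double complex. The only cosmetic omission is the trivial observation that the case where one of $U$, $V$ is empty cannot occur (a nonempty manifold has $b_0^{0,0}>0$), which does not affect the argument.
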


We will now define a `diagrammatic' ring which contains $\cU_\ast$ (and may be equal to it): Denote by $\cR_0$ the free abelian group on the set of isomorphism classes of finite dimensional bounded double complexes modulo the relations $[A\oplus B]=[A]+[B]$. $\cR_0$ has a $\Z$-basis consisting of all isomorphism classes of indecomposable double complexes. By setting $[A]\cdot [B]:=[A\otimes B]$, the group $\cR_0$ is equipped with a ring structure. The free abelian group on all squares, $I_{Sq}\subseteq \cR_0$ is an ideal and we set $\cR:=\cR_1:=\cR_0/I_{Sq}$. In other words, $\cR$ is the Grothendieck ring of the category of essentially finite dimensional bounded double complexes localized at $E_1$-isomorphisms. Additively, $\cR$ is the free abelian group on isomorphism classes of zigzags. The multiplicative structure is described explicitly in \cite[sect. 3]{stelzig_structure_2018}. The following is a well-defined map of rings (\cite[sect. 4]{stelzig_structure_2018}) and by Lemma \ref{lem: same cohomological invariants}, $\cU_\ast$ may be identified with the image:
\begin{align*}
\CM_\ast&\longrightarrow \cR[z]\\
[X]&\longmapsto [X]:=[A_X]\cdot z^{\dim_\C X}=\sum_{Z\text{ zigzag}} \mult_Z(A_X)[Z] z^{\dim_\C X}.
\end{align*}

The isomorphism class of any indecomposable complex $Z$ is uniquely determined by its support $\supp(Z)=\{(p,q)\mid Z^{p,q}\neq 0\}$. For any isomorphism class of an indecomposable complex $Z$, there is the conjugate isomorphism class $\tau Z$ with support $\supp (\tau Z)=\{(p,q)\mid (q,p)\in\supp(Z)\}$ and, fixing a degree $n$, the dual isomorphism class $\sigma Z$ with support $\supp(\sigma Z)=\{(p,q)\mid (n-p,n-q)\in Z\}$. 

\begin{definition}\label{def: univ form}
Let $\cR$ be as above. We denote by $\cU_\ast^{form}\subseteq \cR[z]$ the graded subring consisting in degree $n$ of those elements $\sum_Z m_Z[Z]z^n$ which satisfy:	
{\it
	\begin{enumerate}
			\item[(U1)] Bounded support: The support of $Z$ is contained in $\{0,...,n\}^2$.
			\item[(U2)] Serre and conjugation symmetry: $m_Z=m_{\sigma Z}=m_{\tau Z}=m_{\sigma\tau Z}$.
			\item[(U3)] Only dots in the corners: \\If $\{P\}\subsetneq supp(Z)$ for $P=(0,0)$ or $(n,0)$, then $m_Z=0$.
			\item[(U4)] Degenerate FSS: $m_Z=0$ for even length zigzags if $n\leq 2$.
		\end{enumerate}
		}
\end{definition} 
We have $\cU_\ast\subseteq\cU_\ast^{form}$ (c.f. \cite[sect. 4]{stelzig_structure_2018}). By construction $\cU_n^{form}$ is the free abelian group on certain elements denoted $Sym_Z(n)$ for a zigzag $Z$, defined to be $z^n$ times the sum over all elements in the orbit $\Z/2\Z\times\Z/2\Z$-action on isomorphism classes of zigzags given by $\sigma$ and $\tau$.  
\begin{ex}\label{ex: symmetrized zigzag}
Consider the zigzag
\[
L:= [\C^{1,0}\overset{\Id}{\longrightarrow}\C^{2,0}],
\]
where the superscripts indicate the bidegree in which the copy of $\C$ sits. Then 
\[
Sym_L(3)=[\C^{1,0}\overset{\Id}{\longrightarrow}\C^{2,0}]\oplus [\C^{0,1}\overset{\Id}{\longrightarrow}\C^{0,2}]\oplus [\C^{1,3}\overset{\Id}{\longrightarrow}\C^{2,3}]\oplus[\C^{3,1}\overset{\Id}{\longrightarrow}\C^{3,2}].
\]
\end{ex}

We will often depict the $Sym_Z(n)$ in a diagrammatic way, using a checkerboard of size $n\times n$ (which makes the degree-tracking $z^n$-factor redundant). For instance, if $L$ is as in Example \ref{ex: symmetrized zigzag}, we have
\[
Sym_{L}(3)=\begin{gathered}\includegraphics[scale=1]{S1110_3}\end{gathered},
\]
and a basis for the first two degrees of $\cU_\ast^{form}$ is given as follows:
 \[
 \cU_1^{form}=\Z\cdot \begin{gathered}\includegraphics[scale=1]{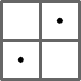}\end{gathered} \oplus \Z\cdot \begin{gathered}\includegraphics[scale=1]{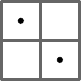}\end{gathered},
 \]
 \[
 \cU_2^{form}=\Z\cdot \begin{gathered}\includegraphics[scale=1]{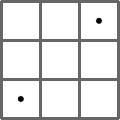}\end{gathered}\oplus
 \Z\cdot\begin{gathered}\includegraphics[scale=1]{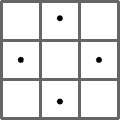}\end{gathered}\oplus \Z\cdot\begin{gathered}\includegraphics[scale=1]{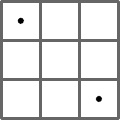}\end{gathered}\oplus
 \Z\cdot\begin{gathered}\includegraphics[scale=1]{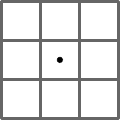}\end{gathered}\oplus
 \Z\cdot\begin{gathered}\includegraphics[scale=1]{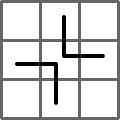}\end{gathered}.
 \]
 Finally, it will sometimes be convenient to also have a non-diagrammatic way of writing down a specific $Sym_Z(n)$. This amounts to defining labels for all possible (isomorphism classes of) zigzags. One such labeling was given in \cite{stelzig_structure_2018} which we recall here briefly: The isomorphism class of an odd zigzag $Z$ is labeled as $S_k^{p,q}$ if $b_k^{p,q}$ is the unique nonzero refined Betti number of $Z$. Similarly, if $Z$ is an even zigzag s.t. there is a nonzero differential $d:E_r^{p,q}(Z)\to E_r^{p+r,q-r+1}(Z)$ in the `Fr\"olicher' (i.e. column) spectral sequence, we denote its isomorphism class by $S_{1,r}^{p,q}$.\footnote{The first $1$ in the subscript stands for the use of the column spectral sequence as opposed to the row spectral sequence. Even zigzags not contributing to the column spectral sequence contribute to the row spectral sequence and are labeled as $S_{2,r}^{p,q}$.} E.g., $S^{1,0}_{1,1}$ equals $L$ from Example \ref{ex: symmetrized zigzag} and
 \[
 Sym_{S_{1,1}^{1,0}}(3)=\begin{gathered}\includegraphics[scale=1]{S1110_3}\end{gathered}\quad  Sym_{S_2^{2,2}}(3)=\begin{gathered}\includegraphics[scale=1]{S222_3}\end{gathered}.
 \]
 
\begin{rem} There is a section to the map $\cU_\ast^{form}\to\RB_\ast^{form}$ given by
 \begin{align*}
 s:\RB_\ast^{form}&\longrightarrow \cU_\ast^{form}\\
 Sym_k^{p,q}(n)&\longmapsto Sym_{S_k^{p,q}}(n),
 \end{align*}
 allowing us to identify $\RB_\ast^{form}$ with a subring of $\cU^{form}_\ast$. Note however that $s\circ rb(X)=[X]\in \cU_\ast$ only for manifolds whose Fr\"olicher spectral sequence degenerates at the first page. We encourage the reader to revisit Section \ref{sec: refined de Rham ring} from this diagrammatic point of view.
\end{rem}
 
 \begin{lem}
 	The map	
 	\begin{align*}
 	\cH_\ast^K&\longrightarrow\cU^{form}_\ast\\
 	x^py^q z^n&\mapsto [S_{p+q}^{p,q}]z^n
 	\end{align*}
 	identifies the Hodge ring of K\"ahler manifolds introduced by Kotschick and Schreieder (c.f. also section \ref{sec: Hodge, Betti, Chern}) with the subring of $\cU^{form}_\ast$ consisting of linear combinations of zigzags of length one (`dots').
 \end{lem}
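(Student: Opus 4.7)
The plan is to identify both rings with the free abelian group on $\langle\sigma,\tau\rangle$-orbits of lattice points in $\{0,\dots,n\}^2$, and to verify that the multiplicative structures coincide under this identification.

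First, I would observe that a length-one zigzag (a `dot') is determined up to isomorphism by the single point of its support; in the notation of the paper it is $S_{p+q}^{p,q}$ for some $(p,q)\in\Z^2_{\geq 0}$. Let $\cU^{dot}_n\subseteq \cU^{form}_n$ denote the $\Z$-linear span of dots. Condition (U3) only constrains zigzags whose support \emph{strictly} contains a corner point, and (U4) only concerns even-length zigzags, so neither condition affects dots; only (U1) and (U2) remain. Hence $\cU^{dot}_n$ is free abelian on the elements $Sym_{S_{p+q}^{p,q}}(n)$, with $(p,q)$ running over representatives of the orbits of the $\langle\sigma,\tau\rangle$-action on $\{0,\dots,n\}^2$ generated by $(p,q)\mapsto(q,p)$ and $(p,q)\mapsto(n-p,n-q)$. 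On the other hand, $\cH^K_n$ is by definition the free abelian group on the symmetrized polynomials $\bigl(\sum_{(p',q')\in O}x^{p'}y^{q'}\bigr)z^n$ for $O$ an orbit of the same action. The stated map sends the orbit sum associated with $O$ to $Sym_{S_{p+q}^{p,q}}(n)$ for any $(p,q)\in O$; hence it is well-defined, lands in $\cU^{form}_\ast$, and is a bijection of graded $\Z$-modules $\cH^K_\ast \overset{\sim}{\longrightarrow} \cU^{dot}_\ast$.

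Finally, I would check that the tensor product of double complexes sends dots to dots: a dot supported at $(p,q)$ tensored with a dot supported at $(p',q')$ is a dot supported at $(p+p',q+q')$. Therefore
\[
[S_{p+q}^{p,q}]\cdot[S_{p'+q'}^{p',q'}] = [S_{p+p'+q+q'}^{p+p',q+q'}]
\]
in $\cR$, which matches the monomial multiplication $x^py^q\cdot x^{p'}y^{q'}=x^{p+p'}y^{q+q'}$ on the $\cH^K$-side. Consequently the bijection above is a ring isomorphism onto $\cU^{dot}_\ast$, as required. The argument is essentially combinatorial; the only step deserving any care is the verification that (U3) and (U4) are genuinely vacuous on the subring of dots, which follows directly from the stated definitions.
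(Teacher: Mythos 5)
Your verification is correct and complete; the paper itself states this lemma without proof as a piece of bookkeeping, and your argument supplies exactly the routine check that is implied: dots are free on $\langle\sigma,\tau\rangle$-orbits of lattice points under (U1)--(U2) with (U3)--(U4) vacuous, matching the basis of $\cH^K_\ast$ by symmetric monomials, and the tensor product of dots is a dot, giving compatibility with multiplication. One small remark on phrasing: the identification of $\cH^K_n$ with the full group of conjugation- and Serre-symmetric polynomials is a theorem of Kotschick--Schreieder rather than a definition, but the paper has already set up $\cH^K_\ast$ with that identification in force (cf.\ the proof of Theorem \ref{thm: H=Hform}), so your usage is consistent with the text.
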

 
 Under this identification, we have
 
 \begin{align*}
 A=[\CP^1]=\img{S000_1},&& B=[E]-[\CP^1]=\img{S110_1} 
 \end{align*}
 and 
 \begin{align*}
 C=[(\CP^1)^2]-[\CP^2]&=\img{S211_2}.
 \end{align*}
 
 As before, we obtain a ring-theoretic reformulation of the question whether all linear relations between arbitrary cohomological invariants reduce to the ones listed above:
 
\begin{qu}
	Is $\cU_\ast=\cU_{\ast}^{form}$?
\end{qu}

It will be often convenient to calculate modulo the Hodge ring of Kähler manifolds $\cH^K_\ast$. E.g. for $S_{NK}$ any non-K\"ahler surface, we have
\begin{align*}
[S_{NK}]& \equiv \begin{gathered}\includegraphics[scale=1]{S100_2}\end{gathered} \in \cU_2/\cH_2^K,
\end{align*}

which is a generator for $\cU_2^{form}/\cH^K_2$.
\begin{prop}\label{prop: universal ring in small degrees}$\,$
	\begin{enumerate}
		\item There are equalities $\RB_{\leq 2}=\cU_{\leq 2}=\cU_{\leq 2}^{form}$
		\item The quotient $\cU_3^{form}/\cU_3$ is either trivial or generated by 
		\[
		Sym_{S_{1,2}^{1,0}}(3)=\img{S1201_3}.
		\]
	\end{enumerate}
\end{prop}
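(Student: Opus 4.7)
Both parts proceed by the same recipe: identify an explicit additive basis of $\cU^{form}_\ast$ in the relevant degrees using the diagrammatic conditions (U1)--(U4), and then realize each basis element by a $\Z$-linear combination of compact complex manifolds.

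For part $(1)$, the key observation is that condition (U4) kills every even-zigzag contribution in degrees $\leq 2$, so $\cU^{form}_{\leq 2}$ is additively generated by symmetrizations $Sym_Z(n)$ of odd zigzags only. Together with the support and corner constraints (U1) and (U3), this gives exactly the two bases displayed after Definition \ref{def: univ form}. These correspond bijectively, via the section $s$, to the bases of $\RB_{\leq 2}^{form}$ produced in Section \ref{sec: refined de Rham ring}. By Remark \ref{rem: generators for RB up to degree 2}, every basis element of $\RB_{\leq 2}^{form}$ is realized by an explicit $\Z$-linear combination of the surfaces $\CP^1$, an elliptic curve, $\CP^1 \times \CP^1$, $\CP^2$ and a Hopf surface. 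Since the Fr\"olicher spectral sequence of any compact complex surface degenerates at $E_1$, the class in $\cU_\ast$ of each such surface involves only dot zigzags, i.e.\ $[X] = s(rb(X))$. Hence every basis element of $\cU^{form}_{\leq 2}$ already lies in $\cU_{\leq 2}$, and combining this with the previous bijection yields $\cU_{\leq 2} = \cU^{form}_{\leq 2} = \RB_{\leq 2}$.

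For part $(2)$, the plan is to enumerate all indecomposable zigzag classes $Z$---odd of length $1$, $3$, $5$ and even of length $2$, $4$, $6$---whose symmetrization $Sym_Z(3)$ satisfies (U1)--(U3) and so yields an additive generator of $\cU_3^{form}$. Generators that factor as products of degree-$1$ and degree-$2$ elements are in $\cU_3$ already by part $(1)$, so it remains to realize the indecomposable ones individually. The odd length-$3$ and length-$5$ classes, together with most of the even length-$2$, length-$4$ and length-$6$ classes (which encode non-trivial $d_1$, $d_2$ and $d_3$ differentials in the Fr\"olicher spectral sequence), will be realized in Section \ref{sec: constructions} using nilmanifolds, principally the Iwasawa manifold, its Nakamura deformations $I$, $I_{ii}$, $I_{iii}$, and the higher-dimensional nilmanifolds of Lemma \ref{lem: snN calc}. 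The single class $Sym_{S_{1,2}^{1,0}}(3)$ encodes a non-trivial $d_2$ differential in the Fr\"olicher spectral sequence of a threefold corresponding to the strict inequality $e_2^{0,1} > e_3^{0,1}$.

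The main obstacle is precisely this last class: realizing $Sym_{S_{1,2}^{1,0}}(3)$ by a compact complex threefold is equivalent to producing a threefold with $e_2^{0,1} > e_3^{0,1}$, a problem that remains open. Hence the quotient $\cU_3^{form}/\cU_3$ is at most cyclic, generated by this single class---it is trivial exactly when such a threefold exists, recovering the alternative already phrased in Theorem \ref{corintro: no unexp. rels coh. invs. ref. betttis}. The combinatorial enumeration of all degree-$3$ zigzags satisfying (U1)--(U3), together with the verification that the constructions of Section \ref{sec: constructions} cover every generator except $Sym_{S_{1,2}^{1,0}}(3)$, constitutes the bulk of the technical work.
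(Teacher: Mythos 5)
Your proposal is correct and follows essentially the same approach as the paper: part (1) is reduced, as in the text, to the identification $\RB_{\leq 2}=\RB_{\leq 2}^{form}$ via Remark \ref{rem: generators for RB up to degree 2} together with degeneration of the Fr\"olicher spectral sequence for surfaces (which gives $[X]=s(rb(X))$), while part (2) proceeds by writing out the diagrammatic basis of $\cU_3^{form}$ modulo the K\"ahler Hodge ring and deferring the realizations to Section \ref{sec: constructions}, noting that all generators except $Sym_{S_{1,2}^{1,0}}(3)$ are obtained there. Two very minor imprecisions worth flagging: "involves only dot zigzags" should read "only odd zigzags" (the Hopf surface contributes a length-$3$ odd zigzag $S_1^{0,0}$, not merely dots—the correct point is simply that no even zigzags occur); and length-$6$ even zigzags cannot arise in degree $3$ (by (F5) the spectral sequence of a $3$-fold degenerates at $E_3$, and the corner constraint (U3) combined with (U2) excludes them anyway), so the enumeration of even-zigzag generators involves only $d_1$ and $d_2$.
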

\begin{proof}
The first statement follows from the preceding discussion.
 For the second point, we write out a basis of $\cU^{form}_3/\cH^K_\ast$:

\begin{align*}
\frac{\cU_3^{form}}{\cH^K_3}=&\Z\cdot\img{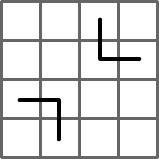} \oplus \Z\cdot \img{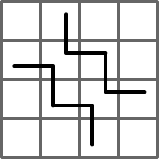}\oplus\Z\cdot \img{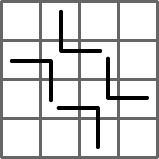} \oplus \Z\cdot\img{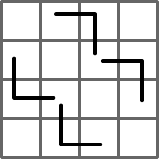} ~\oplus\\
& \Z\cdot\img{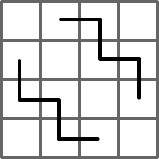} \oplus\Z\cdot\img{S311_3}\oplus \Z\cdot\img{S1110_3}\oplus\Z\cdot\img{S1101_3}~ \oplus\\
&\Z\cdot\img{S1111_3}\oplus\Z\cdot\img{S1102_3} \oplus \Z\cdot\img{S1201_3}\oplus\Z\cdot\img{S1202_3}
\end{align*}
We will see in the next section that all basis elements except possibly $Sym_{S_{1,2}^{1,0}}(3)$ can be realized by formal $\Z$-linear combinations of compact complex manifolds. For now, we note only that 
\[
\img{S110_1}~\cdot~\img{S100_2}=\img{S210_3}\text{ and }\img{S000_1}~\cdot~ \img{S100_2}=\img{S100_3}~+~\img{S311_3},
\]
so by part $1$, we have $Sym_{S_{2}^{1,0}}(3)\in\cU^{form}_3$ and it suffices to realize either $Sym_{S_1^{0,0}}(3)$ or $Sym_{S_3^{1,1}}(3)$.\end{proof}

\section{Construction of certain generators}\label{sec: constructions}
We construct all remaining generators announced in Proposition \ref{prop: universal ring in small degrees} and Remark \ref{rem: generators for RB}. In particular, this proves Theorem \ref{corintro: no unexp. rels coh. invs. ref. betttis}, by the same argument as in Remark \ref{rem: Thm A for Hodge} (with the expected universal relations given in Prop. \ref{prop: obvious linear relations between refined Betti numbers} and Definition \ref{def: univ form}).

\subsection{The Iwasawa manifold and its deformations}
The Iwasawa manifold $\I$ is a complex nilmanifold with left invariant structure determined by the structure equations
\[
d\varphi_1=d\varphi_2=0\text{ and }d\varphi^3=\varphi^1\wedge\varphi^2.
\]
It has a six-dimensional locally complete family of small deformations, which has been stratified by Nakamura into three classes $i,ii,iii$, according to the values of the Hodge numbers \cite{nakamura_complex_1975}. Later, D. Angella has further refined this stratification into five classes $i$, $ii.a$, $ii.b$, $iii.a$ and $iii.b$, taking into account the values of Bott-Chern cohomology as well \cite{angella_cohomologies_2013-1}. The values of Dolbeault and Bott-Chern cohomology on each stratum is (by definition) constant and listed in \cite{angella_cohomologies_2013-1}. Since the dimension is small, using the formulae
\[
2(h_{BC}(X)-h_{\delbar}(X))=\sum_{\substack{Z \text{ zigzag}\\l(Z)\geq 2}}(l(Z)-2)\mult_Z(A_X)
\]

\[
2(h_\delbar(X)-b(X))=\sum_{Z\text{ even zigzag}}\mult_Z(A_X)
\]
(where $l(Z)$ denotes the length of a zigzag) and some elementary combinatorics in the style of \cite[Lemma 4.5]{popovici_higher-page_2020-1}, one verifies that the $E_1$-isomorphism type, i.e. the class in $\cU_\ast$ is also constant along each stratum. We give here the result only,\footnote{See also \cite{flavi_frolicher_2019} where the same result is obtained for examples in all classes except $ii.b$, which is not treated.} for brevity ignoring summands consisting of dots, i.e. the formulae are to be read in $\cU_3/\cH_3^K$:
\begin{align*}
[\I_{i}]&\equiv \begin{gathered}\includegraphics[scale=1]{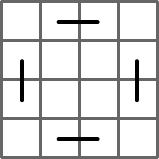}\end{gathered}+ 2\cdot \begin{gathered}\includegraphics[scale=1]{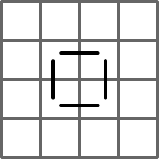}\end{gathered}\\
[\I_{ii.a}]&\equiv \begin{gathered}\includegraphics[scale=1]{S222_3.pdf}\end{gathered}+ \begin{gathered}\includegraphics[scale=1]{S210_3.pdf}\end{gathered} + \begin{gathered}\includegraphics[scale=1]{S1111_3.pdf}\end{gathered}\\
[\I_{ii.b}]&\equiv \begin{gathered}\includegraphics[scale=1]{S221_3.pdf}\end{gathered} + \begin{gathered}\includegraphics[scale=1]{S210_3.pdf}\end{gathered} + \begin{gathered}\includegraphics[scale=1]{S1111_3.pdf}\end{gathered}\\
[\I_{iii.a}]&\equiv \begin{gathered}\includegraphics[scale=1]{S222_3.pdf}\end{gathered}+ 2\cdot\begin{gathered}\includegraphics[scale=1]{S210_3.pdf}\end{gathered}\\
[\I_{iii.b}]&\equiv \begin{gathered}\includegraphics[scale=1]{S221_3.pdf}\end{gathered}+ 2\cdot \begin{gathered}\includegraphics[scale=1]{S210_3.pdf}\end{gathered}
\end{align*}

Since we already know that $Sym_{S_{2}^{1,0}}(3)\in B\cdot \cU_2^{form}\subseteq\cU^{form}_3$, we obtain 
\[
Sym_{S_2^{2,1}}(3),Sym_{S_2^{1,1}}(3),Sym_{S_{1,1}^{1,1}}(3), Sym_{S_{1,1}^{1,0}}(3)\in \cU_3.
\] 
\begin{ex}
	Let us give, as an example, the calculation for the first stratum. Here, by \cite{angella_cohomologies_2013-1}, $h_{BC}(\I_{i})=h_{\delbar}(\I_i)$. Hence all zigzags with nonzero multiplicity have at most length $2$, i.e. the de Rham cohomology is pure and the Fr\"olicher spectral sequence degenerates latest at page $2$. On the other hand $h_\delbar(\I_{i})-b(\I_{i})=12$. Thus, there have to be $12$ length-two zigzags. Since $b_1=4$ and $h_{\delbar}^{1,0}(\I_{i})=3>2=h_{\delbar}^{0,1}(\I_{i})$, one has to have exactly one length two zigzag leaving degree $(1,0)$ horizontally and none leaving degree $(0,1)$ horizontally. Similarly, $b_2=8$ and $h_{\delbar}^{2,0}(\I_{i})=3$, $h_{\delbar}^{1,1}(\I_{i})=6$ and $h_{\delbar}^{2,2}(\I_{i})=2$. Comparing dimensions, we see that there need to be two zigzags leaving degree $(1,1)$ horizontally. By Serre and conjugation Symmetry, all other multiplicites are determined. We note that for this particular stratum, we could also invoke \cite[Cor. 2.7]{popovici_deformations_2022}.
\end{ex}
\subsection{A threefold with $b_3^{1,1}(X)=1$}
Denote by $N$ a complex $3$-dimensional nilmanifold with holomorphic differentials $\varphi_1,\varphi_2,\varphi_3$ and nonzero differential $\varphi_3\mapsto \varphi_1\wedge\varphi_2-\varphi_1\wedge\overline{\varphi}_2$. A tedious but elementary computation (or use of SageMath) yields:
\begin{align*}
[N]	=&~\img{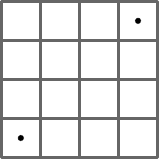}+2\cdot \img{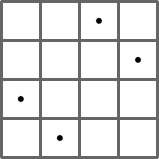}+\img{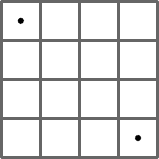}+\img{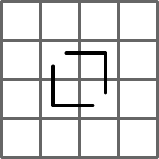}+4\cdot \img{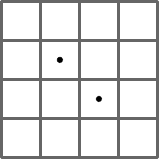}\\
	&~\img{S210_3.pdf}+3\cdot \img{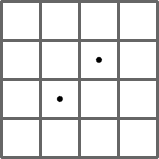}+\img{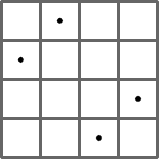}+\img{S221_3.pdf}\\
	\equiv&~ \img{S210_3.pdf}+\img{S221_3.pdf}+\img{S311_3.pdf}	\in\cU_3/\cH_3^K.
\end{align*}

Since the first two summands $Sym_2^{1,0}(3)$ and $Sym_2^{2,1}(3)$ have already been shown to lie in $\RB_\ast\subseteq\cU_\ast$, we get that also $Sym_3^{1,1}(3)\in\RB_3$, resp. $Sym_{S_3^{1,1}}(3)\in\cU_3$.

\subsection{Oeljeklaus-Toma manifolds and the generators $L_n$}
Let $K$ be a number field with exactly $s+2t$ distinct embeddings $\sigma_1,...\sigma_n$ into $\C$, the first $s$ of which have image contained in the reals, while $\sigma_{s+i}=\bar\sigma_{s+t+i}$ are pairs of distinct conjugate complex embeddings. Let $\Oh_K\subseteq K$ denote the ring of algebraic integers of $K$ and $\Oh_K^{\ast, +}\subseteq \Oh_K$ the subgroup of those elements which are positive under all real embeddings. Denoting by $\HH\subseteq \C$ the upper half-plane, there is an action of $\Oh_K\rtimes \Oh_K^{\ast,+}$ on $\HH^s\times\C^t$ by translations and dilations, i.e.	
\[
(a,b).(z_1,...,z_{s+t})=(\sigma_1(b)\cdot z_1 + \sigma_{1}(a),..., \sigma_{s+t}(b)\cdot z_{s+t} + \sigma_{s+t}(a)).
\]
As a consequence of Dirichlet's unit theorem, one can show that there always exist subgroups $U\subseteq \Oh_K^{\ast,+}$ such that the quotient by the above action restricted to $\Oh_K\rtimes U$ is a compact manifold (of dimension $s+t$). We will be only interested in the case $t=1$, where one may take $U=\Oh_K^{\ast,+}$.

\begin{definition}[\cite{oeljeklaus_non-kahler_2005}] An Oeljeklaus-Toma manifold (or OT-manifold) of type $(s,t)$ is any manifold $X(K,U)$ associated to a pair $(K,U)$ as above.
\end{definition}

Combining \cite[Cor. 4.7., Rem. 4.8.]{otiman_hodge_2018} and \cite[Prop. 6.4.]{istrati_rham_2017}, one obtains:

\begin{thm}\label{thm: OT Dolbeault coh}
	Let $n\in\Z_{> 1}$. For an OT-manifold $X$ of type $(n-1,1)$, one has
	\[
	h^{p,q}(X)=\begin{cases} {n-1\choose q} &\text{ if } p=0\\
						{n-1\choose q-1} &\text{ if } p=n\\
						0&\text{ else.}
						\end{cases}
	\]
\end{thm}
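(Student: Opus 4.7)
The plan is to reduce the Dolbeault cohomology to an explicit finite-dimensional complex, in the spirit of the solvable analogue of Nomizu's theorem. Realize $X=\Gamma\backslash(\HH^{n-1}\times\C)$ with $\Gamma=\Oh_K\rtimes U$, and consider the $(1,0)$-forms $\eta_i := dz_i/\operatorname{Im}(z_i)$ for $i\leq n-1$, which are $\Gamma$-invariant, together with $dz_n$ and $d\bar z_n$, which are only $\Oh_K$-invariant and transform under $U$ by the characters $\sigma_n$ and $\bar\sigma_n$ respectively. The $(0,q)$-side is the easier one: a direct computation gives $\delbar\bar\eta_i = 0$ for $i\leq n-1$, so the $\binom{n-1}{q}$ antiholomorphic monomials $\bar\eta_{j_1}\wedge\cdots\wedge\bar\eta_{j_q}$ with $j_1<\cdots<j_q$ drawn from $\{1,\dots,n-1\}$ furnish $\Gamma$-invariant $\delbar$-closed forms, and none of them is exact for degree reasons (there are simply no invariant $(0,q-1)$-forms that could map onto them).

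For the full statement, the key step is to establish (following Otiman and Istrati-Otiman) that an inclusion of forms of this type, twisted appropriately by characters of $U$ to account for the non-invariance of $dz_n$ and $d\bar z_n$, into the total Dolbeault complex is a quasi-isomorphism. This follows the Kasuya and Angella-Kasuya philosophy for solvmanifolds, and for OT manifolds of type $(n-1,1)$ it uses Dirichlet's unit theorem in an essential way: the characters $\sigma_1,\dots,\sigma_{n-1},|\sigma_n|^2$ generate a lattice of characters of $U$ subject to the single relation $\prod_{i<n}\sigma_i\cdot|\sigma_n|^2=1$ coming from $N_{K/\Q}(u)=1$. Combined with a short analysis of the form $\delbar\eta_i=c\,\eta_i\wedge\bar\eta_i$ (for some nonzero constant $c$), this identifies precisely which $H^{p,q}_{\delbar}(X)$ are nonzero: dimension $\binom{n-1}{q}$ for $p=0$ via the antiholomorphic $\bar\eta$-monomials above, and dimension $\binom{n-1}{q-1}$ for $p=n$, either by invoking Serre duality (Proposition \ref{prop: obvious linear relations between refined Betti numbers}) or by a parallel direct argument involving a suitably twisted top form built from $\eta_1\wedge\cdots\wedge\eta_{n-1}\wedge dz_n$ paired with $\bar\eta_J$ and $d\bar z_n$. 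All other bidegrees are excluded because no invariant representative survives the character constraint.

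The main obstacle is the reduction step: proving that the (possibly twisted) invariant subcomplex actually computes the full Dolbeault cohomology. For nilmanifolds this is Nomizu's theorem, which underlies Corollary \ref{cor: left invariant cohomoogy invariants} above, but OT manifolds are only solvable, and for solvmanifolds one needs a criterion of Hattori or Kasuya, requiring a compatibility between the compact and unbounded parts of the lattice and a splitting of the solvable group. For OT manifolds of type $(n-1,1)$ this compatibility is exactly what the combination of the cited references establishes; once it is in hand, the character analysis above completes the proof.
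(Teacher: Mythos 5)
Your proposal is correct and follows essentially the same route as the paper: the paper offers no independent argument for this theorem, but obtains it by combining the cited results of Otiman--Toma and Istrati--Otiman, i.e.\ exactly the reduction to a (twisted) invariant subcomplex plus character analysis and Serre duality that you sketch, with the crucial quasi-isomorphism step likewise left to those references. One minor slip worth fixing: invariant $(0,q-1)$-forms do exist (the shorter $\bar\eta$-monomials and constants); the reason your $\bar\eta$-classes are non-exact in the invariant subcomplex is that $\delbar$ vanishes on all invariant $(0,q-1)$-forms, not that this space is empty.
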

\begin{cor}\label{cor: refined Poincare OT}
	Let $n\in\Z_{\geq 1}$ and $X$ an OT-manifold of type $(n-1,1)$. The refined Poincar\'e polynomial of $X$ is given as:
	\[
	rb(X)=\sum_{k=0}^{n-1}\binom{n-1}{k}Sym_{k}^{0,0}(n).
	\]
\end{cor}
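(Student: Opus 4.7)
The plan is to compute the Hodge filtration $F^{\Cdot}$ and its conjugate $\Fbar^{\Cdot}$ on each $H^k_{dR}(X)$ explicitly, and then read off the refined Betti numbers from the definition $H_k^{p,q}=(F^p\cap\Fbar^q)/(F^{p+1}\cap\Fbar^q+F^p\cap\Fbar^{q+1})$.

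The first step is to establish Fr\"olicher degeneration at $E_1$. Summing the Hodge numbers in Theorem \ref{thm: OT Dolbeault coh} gives $\sum_{p,q}h^{p,q}(X)=2\sum_{q=0}^{n-1}\binom{n-1}{q}=2^n$, while \cite[Prop.~6.4]{istrati_rham_2017} yields de Rham Betti numbers $b_k(X)=\binom{n-1}{k}$ for $0\leq k\leq n-1$, $b_n(X)=0$, and $b_k(X)=\binom{n-1}{2n-k}$ for $n+1\leq k\leq 2n$, also summing to $2^n$. The equality $\sum h^{p,q}=\sum b_k$ forces the Fr\"olicher spectral sequence to degenerate at $E_1$, so $\dim\gr_F^p H^k_{dR}(X)=h^{p,k-p}(X)$; by conjugation the analogous statement holds for $\Fbar^{\Cdot}$.

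The concentration of $h^{p,q}(X)$ in the two columns $p\in\{0,n\}$ now makes the filtration structure extremely rigid. Indeed $F^\Cdot H^k$ has at most two jumps: $F^0H^k=H^k$, $F^{n+1}H^k=0$, and the intermediate level $F^1H^k=\cdots=F^nH^k$ has dimension $h^{n,k-n}(X)$, which vanishes for $k\leq n$ and equals $b_k=\binom{n-1}{2n-k}$ for $k\geq n+1$. Thus for $0\leq k\leq n-1$ one has $F^0H^k=H^k$ and $F^pH^k=0$ for $p\geq 1$, whereas for $n+1\leq k\leq 2n$ one has $F^pH^k=H^k$ for all $0\leq p\leq n$ and $F^{n+1}H^k=0$; for $k=n$ everything vanishes because $H^n_{dR}(X)=0$. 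The conjugate filtration $\Fbar^{\Cdot} H^k$ has identical shape.

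Feeding these two-jump filtrations into the definition of $H_k^{p,q}$ is then routine. For $0\leq k\leq n-1$ the only nonzero intersection is $F^0\cap\Fbar^0=H^k$, yielding $H_k^{0,0}\cong H^k$ of dimension $\binom{n-1}{k}$ with all other $H_k^{p,q}=0$. Symmetrically, for $n+1\leq k\leq 2n$ one finds $H_k^{n,n}\cong H^k$ of dimension $\binom{n-1}{2n-k}$ with all other $H_k^{p,q}=0$. Packaging these refined Betti numbers into the generating series and comparing with $Sym_k^{0,0}(n)=(h^k+x^ny^nh^{2n-k})z^n$ reproduces the stated formula. The only nontrivial input is the Fr\"olicher degeneration; beyond that, the argument is a purely linear-algebraic bookkeeping forced by the sparse shape of the Hodge diamond, so no serious obstacle is expected.
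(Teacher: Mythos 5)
Your proof is correct and follows essentially the same route as the paper: the concentration of the Hodge numbers in the two columns $p=0,n$ forces the Hodge filtration and its conjugate on each $H^k_{dR}(X)$ to be trivial (resp.\ two-step), and the refined Betti numbers are then read off directly from the definition of $H_k^{p,q}$. The only differences are bookkeeping: you make the $E_1$-degeneration explicit via a dimension count against the Betti numbers from \cite{istrati_rham_2017} and treat the range $k\geq n+1$ directly, whereas the paper deduces the filtration shape for $k\leq n$ straight from $E_\infty^{p,q}$ being a subquotient of $E_1^{p,q}$ (with degeneration only noted in the subsequent remark) and leaves the upper range to conjugation and Serre symmetry.
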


\begin{proof}
	From Theorem \ref{thm: OT Dolbeault coh}, we see that $E_\infty^{p,q}(X)=0$ unless $p=0,n$. In particular, the projection
	\[
	H_{dR}^k(X)\to \gr_F^0 H_{dR}^k(X)=E_\infty^{0,k}
	\]
	is an isomorphism for all $k\leq n$, i.e. $F^0H_{dR}(X)=H_{dR}^k(X)$ and $F^1H_{dR}^k(X)=0$ and by conjugation also $\Fbar^0H_{dR}(X)=H_{dR}^k(X)$ and $\Fbar^1 H_{dR}^k(X)=0$. The result now follows from the definition of the $b_k^{p,q}(X)$.
\end{proof}
\begin{rem}
Since the Fr\"olicher spectral sequence for OT-manifolds degenerates, we could also make this calculation in $\cU_\ast$.
\end{rem}
\begin{cor}
	If $\RB_{<n}=\RB_{<n}^{form}$, then $L_n\in\RB_n$.
\end{cor}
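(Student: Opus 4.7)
The natural tool is Corollary~\ref{cor: refined Poincare OT}: for $X$ an OT-manifold of type $(n-1,1)$ we have
\[
rb(X) \;=\; L_n \;+\; \sum_{k=0}^{n-2}\binom{n-1}{k}\,Sym_k^{0,0}(n) \;\in\; \RB_n,
\]
and the coefficient of $L_n$ is $1$. So it is enough to realize each $Sym_k^{0,0}(n)$ with $0 \le k \le n-2$ as an element of $\RB_n$ and then solve for $L_n$.

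For the bulk of these, namely $0 \le k \le n-3$, I would use the identity (verified by direct expansion in $\Z[x,y,h,z]$)
\[
\Phi(A) \cdot Sym_k^{0,0}(n-1) \;=\; Sym_k^{0,0}(n) \;+\; \Phi(C)\cdot Sym_k^{0,0}(n-2),
\]
where $\Phi(A) = rb(\CP^1)$ and $\Phi(C) = rb(\CP^1\times\CP^1) - rb(\CP^2)$ are realized in $\RB_{\le 2}$ (Remark~\ref{rem: generators for RB up to degree 2}). For $k \le n-3$, both $Sym_k^{0,0}(n-1)$ and $Sym_k^{0,0}(n-2)$ satisfy conditions (B1)--(B4) of Proposition~\ref{prop: obvious linear relations between refined Betti numbers} — the only potentially dangerous Serre-dual corner entry, sitting at position $(p,q)=(\dim,\dim)$, occurs at level $2\dim - k \ne \dim$ in this range. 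By the hypothesis $\RB_{<n} = \RB_{<n}^{form}$ these elements are realized, and hence so is $Sym_k^{0,0}(n)$.

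The boundary case $k = n-2$ has to be handled separately, because $Sym_{n-2}^{0,0}(n-2)$ has $b_{n-2}^{n-2,n-2} = 1$, violating (B4), and so is not in $\RB_{n-2}^{form}$. For $n \ge 4$ I would instead use
\[
\Phi(L) \cdot Sym_{n-3}^{0,0}(n-2) \;=\; Sym_{n-2}^{0,0}(n) \;+\; \Phi(C)\cdot Sym_{n-2}^{1,1}(n-2),
\]
whose right-hand factors do satisfy (B4) (a routine check), hence lie in $\RB_{n-2}$ by hypothesis. For the remaining case $n = 3$ the only missing piece is $Sym_1^{0,0}(3)$, which can be realized via $\Phi(L)\cdot\Phi(A) = Sym_1^{0,0}(3) + Sym_3^{1,1}(3)$, using the coincidence $Sym_3^{2,2}(3) = Sym_3^{1,1}(3)$ peculiar to this low dimension and the nilmanifold of the preceding subsection, which realizes $Sym_3^{1,1}(3)$.

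The main obstacle is precisely this boundary step: the clean recursion would require the formal element $Sym_{n-2}^{0,0}(n-2)$, which is genuinely unrealizable by any compact complex manifold. One has to pivot to the alternative $\Phi(L)$-based identity and, for $n = 3$, invoke the explicit threefold construction separately.
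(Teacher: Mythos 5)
Your proof uses the same starting point as the paper — the refined Poincar\'e polynomial $rb(X) = L_n + \sum_{k=0}^{n-2}\binom{n-1}{k}Sym_k^{0,0}(n)$ for an OT-manifold $X$ of type $(n-1,1)$ — but then diverges. The paper concludes in one step by appealing to the splitting $\RB^{form}_n=(W_{n-1}\RB^{form}_\ast)_n\oplus\langle M_n,L_n\rangle$ coming from the structure theorem for $\RB^{form}_\ast$ and noting that $rb(X)$ contains no $M_n$-summand; you instead verify the needed membership $\sum_{k\leq n-2}\binom{n-1}{k}Sym_k^{0,0}(n)\in(W_{n-1}\RB^{form}_\ast)_n$ concretely, via the recursion $\Phi(A)\cdot Sym_k^{0,0}(n-1)=Sym_k^{0,0}(n)+\Phi(C)\cdot Sym_k^{0,0}(n-2)$ for $k\leq n-3$ and a separate $\Phi(L)$-based identity for the boundary case $k=n-2$. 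This is a perfectly legitimate, and more self-checking, alternative: it makes visible exactly where condition (B4) bites (your observation that $Sym_{n-2}^{0,0}(n-2)$ itself is not formal is the right diagnosis), whereas the paper's argument hides this inside the structure theorem.

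One small arithmetic slip: your boundary identity
\[
\Phi(L)\cdot Sym_{n-3}^{0,0}(n-2)=Sym_{n-2}^{0,0}(n)+\Phi(C)\cdot Sym_{n-2}^{1,1}(n-2)
\]
is valid only for $n\geq 5$. For $n=4$, the monomial $xyh^2$ in degree $2$ is fixed by the antidiagonal involution, so $Sym_2^{1,1}(2)=xyh^2z^2=\Phi(C)$ carries the normalization constant $\alpha=\tfrac14$ rather than $\tfrac12$, and the correct identity is $\Phi(L)^2=Sym_2^{0,0}(4)+2\,\Phi(C)^2$. The conclusion $Sym_2^{0,0}(4)\in\RB_4$ of course still follows, so this is a typo, not a gap. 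Also, your $n=3$ appeal to $Sym_3^{1,1}(3)\in\RB_3$ (from the threefold with $b_3^{1,1}=1$) makes the extra inputs explicit; in fact the paper's displayed splitting of $\RB^{form}_n$ as $(W_{n-1}\RB^{form}_\ast)_n\oplus\langle M_n,L_n\rangle$ is only correct for $n\geq 4$ — in degree $3$ there are the additional new generators $Sym_2^{2,1}(3)$ and $Sym_3^{1,1}(3)$ — so for $n=3$ your version, which names the extra generator that must be realized, is actually the more careful one.
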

\begin{proof}
	By \cite{oeljeklaus_non-kahler_2005}, OT-manifolds of given type $(s,t)\in\Z_{\geq 1}^2$ always exist, so we may take an OT-manifold $X$ of type $(n-1,1)$. Then by Corollary \ref{cor: refined Poincare OT}, we have
		\begin{align*}
		rb(X)&=L_n + \sum_{k=0}^{n-2}\binom{n-1}{k}Sym_k^{0,0}(n).
		\end{align*}
	Since $\RB^{form}_n=(W_{n-1}\RB^{form}_\ast)_n\oplus\langle M_n,L_n\rangle$, where $W_k\RB^{form}_\ast$ is the subring generated in degrees $\leq k$ and no $M_n$-summand appears in $rb(X)$, we conclude that $L_n\in \RB_n$.
\end{proof}

\begin{rem}
In particular, this finishes the proof of $\RB_{\leq 3}=\RB_{\leq 3}^{form}$.
\end{rem}
\subsection{The generators $M_n$ for $n$ odd}\label{sec: Mn}
To construct the generators $M_n$, we search for a complex $n$-fold $X_n$ which satisfies $b_{n-1}^{n-1,n-1}(X_n)=1$. Then, modulo the degree $n$-part of the ring generated by all other generators of degree $\leq n$, we have $rb(X_n)\equiv M_n$. By Lemma \ref{lem: spaces FcapFbar}, $b_{n-1}^{n-1,n-1}(X_n)$ is the dimension of the subspace of $H_{dR}^{n-1}(X_n)$ consisting of classes which admit both a holomorphic and an antiholomorphic representative.\medskip

We have seen that for $n=3$ such a manifold may be found among small deformations of the Iwasawa manifold. The following example, due to Daniele Angella, generalizes this and shows that such $X_n$ exist in every odd dimension.\medskip

Consider the $n=2m+1$-dimensional nilpotent Lie algebra $\fm$ with complex structure defined by the structure equations: 
\begin{align*}
d\varphi^\ell&=0 \qquad \text{ for } \ell\in\{1,\ldots,2m\},\\
 d\varphi^{2m+1}&= -\sum_{\ell=1}^{m} \varphi^{2\ell-1}\wedge \varphi^{2\ell}+\sum_{\ell=1}^{m}\sqrt{-1} \varphi^{2\ell-1}\wedge\bar\varphi^{2\ell-1}.
\end{align*} 
For $i\leq j$ write $\varphi^{\hat{i},\hat{j}}$ for the product of all $\varphi^{l}$ (in ascending order) except $\varphi^i$ and $\varphi^j$. These forms form a basis for $\Lambda_{\fm}^{2m-1,0}$. We have

\[
\del\varphi^{\hat{i},\hat{j}}=\begin{cases}
									-\varphi^1\wedge...\wedge\varphi^{2m} &\text{if }i=2\ell-1,j=2\ell\text{ for some }\ell\in\{1,...,m\}\\
									0 &\text{else},
								\end{cases}
\]
in particular, $\dim\im\del\cap \Lambda_{\fm}^{2m,0}=1$. Since a holomorphic form cohomologous to any form of different bidegree would have to be $\del$-exact, we have $b_{n-1}^{n-1,n-1}(\Lambda_{\fm})\leq 1$. On the other hand, for any $\ell\in\{1,\ldots,m\}$,
\begin{align*}
\left[ \sum_{\ell=1}^{m} \varphi^{2\ell-1} \wedge \varphi^{2\ell} \right] &=\left[ \sum_{\ell=1}^{m}\sqrt{-1} \varphi^{2\ell-1}\wedge\bar\varphi^{2\ell-1} \right] \\
&= \left[ \sum_{\ell=1}^{m} \bar\varphi^{2\ell-1} \wedge \bar\varphi^{2\ell}\right] \in H^2_{dR}(\Lambda_{\fm},\mathbb C) ,
\end{align*}
whence also
\[\left[ \left(\sum_{\ell=1}^{m} \varphi^{2\ell-1} \wedge \varphi^{2\ell}\right)^{m} \right] = \left[ \left(\sum_{\ell=1}^{m} \bar\varphi^{2\ell-1} \wedge \bar\varphi^{2\ell}\right)^{m} \right] \in H^{2m}_{dR}(\Lambda_{\fm},\mathbb C).
\]

The class above is represented by $m!\cdot \varphi^1\wedge\cdots\wedge\varphi^{2m}\neq 0$, so $b_{n-1}^{n-1,n-1}(\Lambda_{\fm})=1$. By Corollary \ref{cor: left invariant cohomoogy invariants}, we may now pick $X_n=\Gamma\backslash N$, where $N$ is the simply connected Lie group corresponding to $\fm$ and $\Gamma$ is any lattice such that the Dolbeault cohomology is computed by left invariant forms.

\begin{rem}
The manifolds $X_n$ constructed above may be taken to be (small) deformations of the complex manifolds $\eta\beta_{2m+1}$ considered by Alessandrini and Bassanelli \cite{alessandrini_compact_1991}. The latter are defined as quotients of the nilpotent Lie group
$$ \left\{
\left(\begin{array}{ccccc|c}
1 & z_1 & z_3 & \cdots & z_{2m-1} & z_{2m+1} \\
\hline
 & 1 & 0 & \cdots & 0 & z_{2} \\
 & & \ddots & \ddots & \vdots & \vdots \\
 & & & 1 & 0 & z_{2m-2} \\
 & & & & 1 & z_{2m} \\
 & & & & & 1
\end{array}\right) 
\in \mathrm{GL}(m+2;\mathbb C) \right\} $$
by the co-compact discrete subgroup generated by such matrices with entries in $\mathbb Z[\sqrt{-1}]$. For $m=1$, this yields the Iwasawa manifold.
A basis for the left-invariant $(1,0)$-forms is given by
\[\omega^1=dz_1,\ldots,\omega^{2m}=dz_{2m}, \quad \omega^{2m+1}=dz_{2m+1}-\sum_{\ell=1}^{m}z_{2\ell-1}dz_{2\ell} ,\]
with structure equations
\[d\omega^1=\ldots=d\omega^{2m}=0, \quad d\omega^{2m+1}=-\sum_{\ell=1}^{m} \omega^{2\ell-1}\wedge\omega^{2\ell}.\]
We consider the small deformation corresponding to the direction
\[
s(t)=\left[ \sqrt{-1}t\sum_{\ell=1}^{m} \left(\frac{\partial}{\partial z_{2\ell}}+z_{2\ell-1}\frac{\partial}{\partial z_{2m+1}}\right) \otimes\bar\omega^{2\ell-1}\right] \in H^{0,1}_{\overline\partial}(X,T^{1,0}X)
\]
for $t\in\mathbb R$ with $|t|<\epsilon$.
It is straightforward to check that $s(t)$ satisfies the obstruction given by the Maurer-Cartan equation, so it defines a family $\{X_t\}_{|t|<\epsilon}$ of small deformations of $X=X_0$.\medskip

The holomorphic coordinates on $X_t$ are given by the solutions $\left(\zeta_\ell(t)\right)_{\ell\in\{1,\ldots,2m+1\}}$ of the initial-value PDE
\[ \left\{\begin{array}{l}
\overline\partial \zeta_\ell(t)-s(t)\zeta_\ell(t) =0 \\
\zeta_\ell(0)=z_\ell .
\end{array}
\right.
\]
It is straightforward to check that the following is a solution:
\begin{align*}
 \zeta_{2\ell-1}(t)&=z_{2\ell-1}, \quad \zeta_{2\ell}(t)=z_{2\ell}+\sqrt{-1}t\bar z_{2\ell-1}, \qquad \text{ for } \ell\in\{1,\ldots,2m\},\\
 \zeta_{2m+1}&= z_{2m+1}+\sum_{\ell=1}^{m}\sqrt{-1}t|z_{2\ell-1}|^2 . 
\end{align*}

We can then use the following coframe of invariant $(1,0)$-form on $X_t$:
\begin{align*}
\varphi^\ell(t)&=d\zeta_\ell(t) \qquad \text{ for } \ell\in\{1,\ldots,2m\},\\
\varphi^{2m+1}(t) &= d\zeta_{2m+1}(t)-\sum_{\ell=1}^{m} \left( z_{2\ell-1} d\zeta_{2\ell}(t)+\sqrt{-1}t \bar z_{2\ell-1}d\zeta_{2\ell-1}(t)\right) . 
\end{align*}
The structure equations in this coframe are
\begin{align*}
 d\varphi^\ell(t)&=0 \qquad \text{ for } \ell\in\{1,\ldots,2m\},\\
d\varphi^{2m+1}(t)&= -\sum_{\ell=1}^{m} \varphi^{2\ell-1}(t) \wedge \varphi^{2\ell}(t)+\sum_{\ell=1}^{m}\sqrt{-1}t \varphi^{2\ell-1}(t)\wedge\bar\varphi^{2\ell-1}(t).
\end{align*}
For $t=1$, we recover the previous examples, but the same argument there applies for any $t\neq 0$.
\end{rem}
\subsection{Fr\"olicher differentials in dimension $3$}\label{sec: h15}
In \cite{ceballos_invariant_2016}, the Fr\"olicher spectral sequence of left-invariant structures on nilmanifolds of real dimension $6$ is studied. Only three nilpotent Lie algebras admit left-invariant complex structures with $E_2\neq E_3$, namely $\fh_{13}, \fh_{14}$ and $\fh_{15}$. The last example has a particulary rich behaviour: The left-invariant complex structures are parametrized by a triple $(\rho,B,C)\in\{0,1\}\times\R_{>0}\times\C$ (subject to some conditions), and depending on the parameter values, the behaviour of the Fr\"olicher spectral sequence changes. In fact, inspection of the proof of \cite[Thm. 4.1]{ceballos_invariant_2016} yields the following equalities in $\cU_3/\RB_3^{form}$:

\begin{align*}
\rho=c=0, B=1: &&[X_{(\rho,c,B)}]&\equiv \begin{gathered}\includegraphics[scale=1]{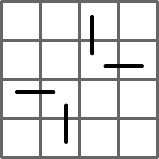}\end{gathered}+2\cdot\begin{gathered}\includegraphics[scale=1]{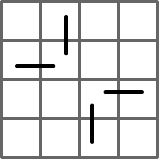}\end{gathered}\\
\rho=1, c=0, B\neq 1: && [X_{(\rho,c,B)}]&\equiv\begin{gathered}\includegraphics[scale=1]{S1102_3.pdf}\end{gathered}  + 2\cdot\begin{gathered}\includegraphics[scale=1]{S1111_3.pdf}\end{gathered}+\delta_{0}^B\cdot\begin{gathered}\includegraphics[scale=1]{S1101_3.pdf}\end{gathered}\\
\rho=1,|B-1|\neq c\neq 0:&&[X_{(\rho,c,B)}]&\equiv \begin{gathered}\includegraphics[scale=1]{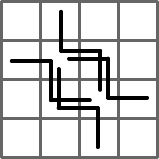}\end{gathered}\\
\rho=0, |B|\neq c\neq 0:&&[X_{(\rho,c,B)}]&\equiv \begin{gathered}\includegraphics[scale=1]{S1101_3.pdf}\end{gathered} + \begin{gathered}\includegraphics[scale=1]{S1202_3.pdf}\end{gathered} 
\end{align*}

Since we have already seen that $\RB_3=\RB_3^{form}$, we obtain that
\[
Sym_{S_{1,2}^{0,2}}(3), Sym_{S_{1,1}^{0,1}}(3), Sym_{S_{1,1}^{0,2}}(3)\in \cU_3,
\]
leaving open only the question whether or not $Sym_{S_{1,2}^{0,1}}(3)\in\cU_3$. An inspection of the other two possibilities of nilmanifolds with nonzero $E_2$-differentials yields:
\begin{prop}\label{prop: no d201 on nilmfds}
For all left-invariant structures on $6$-dimensional nilmanifolds, the differential $d_2^{0,1}:E_2^{0,1}\to E_2^{2,0}$ vanishes. In particular, it is not possible to realize $Sym_{S_{1,2}^{0,1}}(3)$ using complex nilmanifolds.
\end{prop}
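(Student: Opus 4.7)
The plan is to reduce to a finite case analysis using the classification in \cite{ceballos_invariant_2016}. That paper enumerates all $6$-dimensional real nilpotent Lie algebras admitting an integrable complex structure and identifies precisely three of them, $\fh_{13}$, $\fh_{14}$, and $\fh_{15}$, which admit a left-invariant complex structure with $E_2 \neq E_3$. For every other admissible Lie algebra and every complex structure on it, $E_2 = E_3$, and in particular $d_2^{0,1}$ vanishes. So the problem reduces to verifying the assertion on these three algebras.

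For $\fh_{15}$ the computation has already been carried out in Section \ref{sec: h15}: the explicit decompositions of $[X_{(\rho,c,B)}]$ modulo $\RB_3^{form}$ listed there show that the zigzag $S_{1,2}^{0,1}$, which is precisely the indecomposable whose multiplicity in $A_X$ counts $\dim \im (d_2^{0,1})$, never appears, regardless of the parameter values. Hence $d_2^{0,1}=0$ for every left-invariant complex structure on $\fh_{15}$.

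For $\fh_{13}$ and $\fh_{14}$, I would carry out the analogous explicit computation. Concretely, fix a Salamon coframe $\{\omega^1,\omega^2,\omega^3\}$ of $(1,0)$-forms, write down the parametrised families of structure equations from \cite{ceballos_invariant_2016}, and then, stratum by stratum, (i) compute a basis of $E_1^{0,1} = \ker(\bar\partial|_{\Lambda^{0,1}_\fg})$, (ii) identify the condition $\partial\alpha \in \bar\partial(\Lambda^{1,0}_\fg)$ that selects $E_2^{0,1} \subseteq E_1^{0,1}$, (iii) choose $\beta \in \Lambda^{1,0}_\fg$ with $\bar\partial\beta = \partial\alpha$, and (iv) verify that $[\partial\beta]=0$ in $E_2^{2,0}$, i.e.\ that $\partial\beta = \partial\beta'$ for some $\bar\partial$-closed $\beta'$. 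Equivalently, in the spirit of Section \ref{sec: h15}, one can compute the full isomorphism class of $A_X$ in $\cU_3$ (for example using SageMath code as in \cite{angella_sagemath_2017}) and read off that $S_{1,2}^{0,1}$ does not occur as a summand in any stratum.

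The main obstacle is organisational rather than conceptual: one must carefully enumerate the strata of the moduli space of complex structures on $\fh_{13}$ and $\fh_{14}$ and perform the computation on each. The structure equations for these two algebras are, however, considerably more constrained than those of $\fh_{15}$ (as reflected by the shorter parameter families recorded in \cite{ceballos_invariant_2016}), so the case analysis is expected to be noticeably shorter than the one already carried out for $\fh_{15}$. No new geometric input beyond Salamon's theorem and the classification in \cite{ceballos_invariant_2016} should be required.
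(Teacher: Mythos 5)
Your proposal is correct and takes essentially the same approach as the paper: reduce via the classification in \cite{ceballos_invariant_2016} to the three algebras $\fh_{13}$, $\fh_{14}$, $\fh_{15}$ (since $E_2 = E_3$ forces all $d_2$-differentials, in particular $d_2^{0,1}$, to vanish on every other algebra), read off the vanishing for $\fh_{15}$ from the stratum-by-stratum decompositions in Section \ref{sec: h15}, and dispose of $\fh_{13}$, $\fh_{14}$ by direct inspection. The paper itself gives no more detail than ``an inspection of the other two possibilities \dots\ yields,'' so the fact that you outline the $\fh_{13}$, $\fh_{14}$ computation rather than carry it out is consistent with the paper's own treatment; your steps (i)--(iv) correctly describe how $d_2^{0,1}([\alpha]) = [\del\beta]$ is formed and what it means for this class to vanish in $E_2^{2,0}$.
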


\section{Bimeromorphic invariants, reprise}

We prove Theorem \ref{corintro: bimeromorphic coh. invs}. Let $\cB^{\RB}\subseteq \RB_\ast$ and $\cB^{\cU}\subseteq\cU_\ast$ be the ideals generated by differences of pairs of bimeromorphic manifolds.
\begin{lem}\label{lem: ideal bimeromorphic diffs}
The ideals $\cB^{\RB}\subseteq \RB_\ast$ and $\cB^{\cU}\subseteq\cU_\ast$ are principal ideals generated by \[
C=Sym_2^{1,1}(2)=\img{S211_2}.
\]
\end{lem}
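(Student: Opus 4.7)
The plan is to prove the equality $\cB^{\cU}=(C)$ inside $\cU_\ast$; the corresponding statement for $\cB^{\RB}$ will then follow at once, since the natural surjection $\cU_\ast\to\RB_\ast$ is compatible with bimeromorphism and sends $C$ to $C$. The easy containment $(C)\subseteq\cB^{\cU}$ is immediate from Remark \ref{rem: generators for RB up to degree 2}: since $\CP^1\times\CP^1$ and $\CP^2$ are both Kähler, the identity $rb([\CP^1\times\CP^1]-[\CP^2])=C$ in $\RB_\ast$ lifts to $\cU_\ast$, and these two surfaces are bimeromorphic (both are rational).

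For the substantial direction $\cB^{\cU}\subseteq(C)$, the key tool will be an $E_1$-isomorphism-level blow-up formula for the double complex: for a smooth blow-up $\pi\colon\tilde X\to X$ along a smooth compact submanifold $S$ of codimension $c$, one should have
\[
A_{\tilde X}\simeq A_X\oplus\bigoplus_{i=1}^{c-1}A_S[i,i],
\]
where $A_S[i,i]$ denotes the bigrading shift of $A_S$ by $(i,i)$. This refines the classical Dolbeault blow-up formula and can be assembled from the cohomological blow-up results of Rao-Yang-Yang and related works together with the structure Theorem \ref{thm: structure dc}. Passing to $\cU_\ast$ it reads $[\tilde X]-[X]=[S]\cdot E$, where $E\in\cU_c$ is a sum of Sym classes of zigzags supported strictly in the interior of the $c\times c$ grid. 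A combinatorial check then shows that every such interior Sym class is divisible by $C$: for a zigzag $Z$ with support in $\{1,\dots,n-1\}^2$, the shift by $(-1,-1)$ respects Serre and conjugation symmetry and yields $Sym_Z(n)=C\cdot Sym_{Z[-1,-1]}(n-2)$. Hence $E\in(C)$ and $[\tilde X]-[X]\in(C)$ for every smooth blow-up.

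To handle an arbitrary bimeromorphism $X\dashrightarrow Y$, I would invoke Hironaka's theorem on resolution of indeterminacies to produce a common smooth modification $W$ whose structure maps $W\to X$ and $W\to Y$ can, after further smooth blow-ups of $W$, be arranged to be compositions of smooth blow-ups. Iterating the blow-up formula would then give $[W]-[X],[W]-[Y]\in(C)$, whence $[X]-[Y]\in(C)$. The main obstacle I anticipate is precisely this last reduction: some form of weak factorization beyond the algebraic/Kähler setting (where it is Abramovich-Karu-Matsuki-Włodarczyk) is required, and its analytic analogue for non-Kähler compact complex manifolds is delicate; the remaining ingredients are then formal combinatorics on zigzag diagrams combined with the blow-up formula for double complexes.
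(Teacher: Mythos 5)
Your proposal follows essentially the same strategy as the paper: establish an $E_1$-level blow-up decomposition of the double complex, then conclude by weak factorization. Two points deserve comment. First, the ``main obstacle'' you flag is in fact not one: the weak factorization theorem of Abramovich--Karu--Matsuki--W{\l}odarczyk \cite{abramovich_torification_2002} (see also \cite{wlodarczyk_toroidal_2003}) is proved there for bimeromorphic maps of arbitrary compact complex manifolds, not merely in the algebraic or K\"ahler setting, and these are precisely the references the paper invokes; no new analytic input is needed. Second, there is a small but genuine gap in your divisibility step: your shift argument $Sym_Z(n)=C\cdot Sym_{Z[-1,-1]}(n-2)$ for interior zigzags proves divisibility by $C$ inside $\cU^{form}_\ast$ (equivalently $\cR[z]$), whereas the lemma asserts $\cB^\cU\subseteq C\cdot\cU_\ast$, so the cofactor must itself lie in $\cU_\ast$, i.e.\ be a $\Z$-linear combination of classes of actual compact complex manifolds. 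This is easily repaired: the decomposition $A_{\tilde X}\simeq_{E_1} A_X\oplus\bigoplus_{i=1}^{c-1}A_S[i,i]$ (whose correct reference is \cite{stelzig_double_2019}, the paper's own blow-up paper, rather than Rao--Yang--Yang, which gives only the Dolbeault-level statement) packages in $\cU_\ast$ as $[\tilde X]-[X]=C\cdot[S\times\CP^{c-2}]$, exhibiting the cofactor as a manifold class. Your argument for $(C)\subseteq\cB^{\cU}$ via the bimeromorphic rational surfaces $\CP^1\times\CP^1$ and $\CP^2$ is fine and matches the paper's point-blow-up argument in substance.
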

\begin{proof}
If $Z\subseteq X$ is a submanifold of codimension $r\geq 2$ and $\tilde{X}$ the blowup of $X$ at $Z$, then $[\tilde{X}]-[X]=\sum_{i=1}^{r-2}C^i\cdot[Z]z^{r-1-i}\in\cU_\ast$ by \cite{stelzig_double_2019}. In particular, taking a point blowup shows that $C$ is contained in the respective ideals. On the other hand, every bimeromorphic map is the composition of blow-ups and blowdowns by the weak factorisation theorem \cite{abramovich_torification_2002}, \cite{wlodarczyk_toroidal_2003}, hence the other inclusion holds.
\end{proof}
It is open whether the inclusions $\RB_\ast\subseteq RB_\ast^{form}$ and $\cU_\ast\subseteq\cU_\ast^{form}$ of the refined de Rham ring and the universal ring of cohomological invariants into their formal counterparts from Definition \ref{def. formal de Rham ring} and Definition \ref{def: univ form} are equalities in any degree. Nevertheless, in view of the previous Lemma we will now study the principal ideals generated by $C$ in these potentially bigger rings. The equalities in low degrees will allow us to bring this to bear on the original, geometrically defined rings.
\begin{lem}\label{lem: bases for formal rings mod C}The following holds for the graded rings $\RB_\ast^{form}$ and $\cU_\ast^{form}$:
\begin{enumerate}
\item The polynomials $Sym_n^{n,0}(n)$, $Sym_2^{2,2}$ and
\begin{align*}
Sym_k^{p,0}(n)&\text{ for } 0\leq p\leq k\leq n-1,\\
Sym_k^{k, p}(n)&\text{ for } 0\leq p\leq k\leq n-1,\\
Sym_n^{n-1,p}(n)&\text{ for } 2\leq p\leq n-1,
\end{align*}
project to a basis for the degree $n$ part of $\RB_\ast^{form}/\RB_\ast^{form}\cdot C$.
\item The degree $n$ part of $\cU_\ast^{form}/\cU_\ast^{form}\cdot C$ has a basis consisting of the $Sym_Z(n)\in\cU_n$ where $Z$ is a zigzag satisfying at least one of the following conditions:
\begin{enumerate}
\item the support of $Z$ contains a point $(p,q)$ with $p$ or $q$ equal to $0$ or $n$,
\item the length of $Z$ is at least two and $(1,1)\in Z$,
\item the length of $Z$ is at least two and $(n-1,1)\in Z$,
\item $[Z]=S_{1,1}^{1,2}$ if $n=4$.
\end{enumerate} 
\end{enumerate}
\end{lem}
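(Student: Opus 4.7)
The plan rests on the observation that $C = Sym^{1,1}_2(2) = Sym_{S^{1,1}_2}(2) = xyh^2z^2$ is on both the refined Betti side and the zigzag side a single-orbit element: a dot at $(1,1)$ on $n = 2$, fixed by both $\sigma$ and $\tau$. Multiplication by $C$ therefore acts rigidly on the $Sym$-bases, sending $Sym^{p,q}_k(n) \mapsto Sym^{p+1, q+1}_{k+2}(n+2)$ whenever the target is a valid basis element and, in the zigzag picture, shifting the $\sigma\tau$-orbit of $\supp(Z)$ by $(1,1)$ while increasing $n$ by $2$. Consequently a basis element lies in the principal ideal generated by $C$ if and only if its reverse shift by $(-1, -1, -2, -2)$ yields a valid basis element on $n - 2$. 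The lemma thus reduces to enumerating those basis elements for which the reverse shift fails.

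For part (1), the plan is to examine the known basis of $\RB^{form}_n$ --- the $Sym^{p,q}_k(n)$ permitted by (B1)--(B4), i.e.\ the basis of $\RB'_n$ with $Sym^{1,1}_1(n)$ and the bottom-row orbits $Sym^{r, 0}_n(n)$ for $0 \leq r < n$ removed --- and determine for each when the shifted orbit fails the test. Two distinct failure modes arise: (i) the original $\sigma\tau$-orbit touches the boundary of $\{0, \ldots, n\}^2$ in the $p$- or $q$-coordinate, causing the shift to leave the allowed region; this produces the families $Sym^{p, 0}_k(n)$ and $Sym^{k, p}_k(n)$ with $0 \leq p \leq k \leq n - 1$, together with the top element $Sym^{n, 0}_n(n)$; (ii) the shifted polynomial, while valid as a polynomial, falls into the (B4)-excluded bottom-row class $Sym^{r', 0}_{n - 2}(n - 2)$ for $r' < n - 2$; tracking this through the orbit produces the family $Sym^{n-1, p}_n(n)$ for $2 \leq p \leq n - 1$, together with the degenerate element $Sym^{2,2}_2$ that appears for small $n$. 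A direct check shows every other basis element of $\RB^{form}_n$ admits an explicit $C$-preimage, while the listed elements are linearly independent modulo the ideal.

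For part (2) the same mechanism is carried out diagrammatically. Condition (a) records failure of the $(-1, -1)$-shifted support to sit inside $\{0,\ldots, n-2\}^2$. For zigzags of length $\geq 2$, (U3) forbids the corners $(0, 0)$ and (via the $\sigma\tau$-symmetry) $(n-2, 0)$ from appearing in the support on $n - 2$; these correspond under the $(+1, +1)$ shift precisely to $(1, 1)$ and $(n-1, 1)$ on $n$, giving conditions (b) and (c). Finally, (U4) excludes even-length zigzags on $n - 2 \leq 2$. For $n \leq 3$ a direct check shows every even-length zigzag on $n - 2 \leq 1$ already touches a corner, so (a)--(c) cover all failures. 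For $n = 4$ an enumeration of the length-$2$ even zigzags with support in $\{0, 1, 2\}^2$ avoiding the corners $(0, 0), (2, 0), (0, 2), (2, 2)$ shows that up to $\sigma\tau$-symmetry the unique such zigzag has support $\{(0, 1), (1, 1)\}$, which lifts on $n = 4$ to $\{(1, 2), (2, 2)\} = \supp(S^{1,2}_{1,1})$, accounting for (d); length-$\geq 4$ even zigzags on $n - 2 = 2$ are easily seen to always contain a corner.

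The main obstacle is the careful bookkeeping of the interplay between the $\sigma\tau$-symmetry and the small-$n$ exclusions (B4), (U3), (U4). In particular one must not overlook the interior exceptions --- those basis elements whose support sits strictly inside the allowed region yet whose $C$-quotient fails to be valid, of which $Sym^{n-1, p}_n(n)$ in the refined Betti case and $S^{1,2}_{1,1}$ on $n = 4$ in the universal case are the prototypes --- and conversely must verify that the listed families exhaust these. This is a purely finite combinatorial check, independent of any geometric realizability input, and the required preimages under $C$ for non-exceptional basis elements are immediate from the shift description.
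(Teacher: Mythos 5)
The paper itself omits the proof, saying only that the lemma ``is readily seen by the diagrammatic description'' of Definition~\ref{def: univ form}, so there is no paper argument to compare against. Your plan is the natural one and is correct in outline: since $C$ corresponds to a single monomial (a dot at $(1,1)$ on a size-$2$ board), multiplication by $C$ acts injectively as the shift $Sym^{p,q}_k(n)\mapsto Sym^{p+1,q+1}_{k+2}(n+2)$, respectively shifts a zigzag's support by $(1,1)$ and the dimension by $2$; hence the quotient has as basis exactly those $Sym$-elements whose reverse shift is not a valid basis element in degree $n-2$. Enumerating the failure modes of the reverse shift against (B1)--(B4) and (U1)--(U4) is the right strategy, and your identifications of (a) with failure of (U1), of (b),(c) with failure of (U3) (the corners $(0,0)$, $(n-2,0)$ lifting to $(1,1)$, $(n-1,1)$), and of (d) with failure of (U4) on $n-2=2$, together with the observation that every even zigzag on $n-2\leq 1$ already hits a corner, are all correct.

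There is, however, one substantive gap in part~(1). You derive the ``interior'' exceptions exclusively from the half of (B4) excluding the bottom-row classes, i.e.\ $b_{n-2}^{n-2,p}=0$, which produces $Sym^{n-1,p}_n(n)$ for $2\leq p\leq n-1$. But (B4) has a second half, $b_1^{1,1}=0$; the reverse shift can also land on $Sym^{1,1}_1(n-2)$, which is in $\RB'_{n-2}$ but excluded from $\RB^{form}_{n-2}$. This produces the exception $Sym^{2,2}_3(n)$, and it occurs for \emph{every} $n\geq 4$, not merely for small $n$: for $n=3$ it happens to lie in the $Sym^{n-1,p}_n(n)$ family via the $\sigma\tau$-action, but from $n=4$ onwards it is a genuinely separate basis element. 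Your write-up instead attributes the residual element to a ``degenerate'' bottom-row case and claims it appears only for small $n$, which is wrong; a rank count (for instance $\rk\bigl(\RB^{form}_4/C\cdot\RB^{form}_2\bigr)=24-5=19$, with the listed families supplying only $18$ before adding this element) pins down both the element and the fact that it must recur in every degree $n\geq 4$. (The lemma as printed writes $Sym_2^{2,2}$, which is almost certainly a typo for $Sym_3^{2,2}$.) To close the gap, treat the two clauses of (B4) separately when enumerating shifted-orbit failures. On the $\cU^{form}$ side this issue does not arise, because the zigzag $S_1^{1,1}$ has support $\{(0,0),(1,0),(0,1)\}$, so its exclusion is already a consequence of (U3) and is captured by condition (b).
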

Part $1$ is a special case of part $2$ which is readily seen by the diagrammatic description and the restrictions given in Definition \ref{def: univ form} and we omit a formal proof. For example, a basis for the degree $3$ part of $\cU_\ast^{form}/\cU_\ast^{form}\cdot C$ is given by all $Sym_Z(3)\in\cU_3^{form}$ except the following:

\[
\img{S211_3}\quad\text{ and }\quad\img{S321_3}
\]

\begin{cor}\label{cor: bimeromorphic invariants on formal rings}
Let $R$ be a ring. Any map from $(\RB_\ast^{form}/(C))_n\otimes R$ (resp. $(\cU_\ast^{form}/(C))_n\otimes R$) to $R$ can be written as an $R$-linear combination of the refined Betti numbers (resp. multiplicities of zigzags) that are the coefficients of the basis elements in Lemma \ref{lem: bases for formal rings mod C}. 
\end{cor}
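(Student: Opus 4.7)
The plan is to deduce this immediately from Lemma \ref{lem: bases for formal rings mod C}, which already contains all the substantive content; the corollary is just its dual formulation.

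First I would record that, by Lemma \ref{lem: bases for formal rings mod C}, the quotients $(\RB_\ast^{form}/(C))_n$ and $(\cU_\ast^{form}/(C))_n$ are free $\Z$-modules on the explicit lists of $Sym_k^{p,q}(n)$, respectively $Sym_Z(n)$, given there. Since tensoring with $R$ preserves freeness and bases, the resulting $R$-modules have these same elements as an $R$-basis. Any $R$-linear map from such a module to $R$ is therefore uniquely an $R$-linear combination of the dual basis, i.e.\ of the coefficient-extraction functionals attached to the listed $Sym$-polynomials.

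Next I would identify these coordinate functionals with the cohomological invariants named in the statement. The definition of $\RB_\ast^{form}$ provides a canonical expansion $P = \sum b_k^{p,q}\, Sym_k^{p,q}(n)$, and when $P = rb(X)$ the coefficient of $Sym_k^{p,q}(n)$ is by construction the refined Betti number $b_k^{p,q}(X)$. Analogously, every element of $\cU_n^{form}$ expands uniquely as $\sum m_Z\, Sym_Z(n)$, and when the element is $[X]$ the coefficient $m_Z$ equals $\mult_Z(A_X)$. So the coordinate functionals on the ambient rings really are the named invariants, and the basis selected in Lemma \ref{lem: bases for formal rings mod C} singles out precisely those that descend to the quotient modulo $(C)$.

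There is no serious obstacle. The only technical point is to check that the chosen $Sym$-polynomials, lifted back to $\RB_n^{form}$ (respectively $\cU_n^{form}$), can be completed to a $\Z$-basis of the ambient free module by elements lying in $\RB_n^{form}\cdot C$ (respectively $\cU_n^{form}\cdot C$). This is immediate, since by Lemma \ref{lem: bases for formal rings mod C} the short exact sequence $0 \to \RB_n^{form}\cdot C \to \RB_n^{form} \to \RB_n^{form}/(C) \to 0$ and its analogue for $\cU_\ast^{form}$ split as sequences of free abelian groups, so the dual basis functionals on the quotient extend to functionals on the full ring that vanish on $(C)$.
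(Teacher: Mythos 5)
Your proposal is correct and matches the paper's (implicit) argument: the corollary is stated as an immediate consequence of Lemma \ref{lem: bases for formal rings mod C}, exactly via the observation that the quotient is free on the listed $Sym$-elements, so after tensoring with $R$ any ($R$-linear) functional is a combination of the coordinate functionals, which are precisely the named refined Betti numbers, respectively zigzag multiplicities. The extra remark about splitting the exact sequence is harmless but not needed, since the corollary concerns maps defined on the quotient itself.
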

Let $K$ be a field. As before, we say a $K$-linear combination of cohomological invariants of compact complex $n$-folds is a map $\cU_n\to K$. We say such a combination is a universal bimeromorphism invariant if it vanishes on $\cB^{\cU}_n$. Similarly, a $K$-linear combination of refined Betti numbers of compact complex $n$-folds is a map $\RB_n\to K$ and we say such a combination is a universal bimeromorphism invariant if it vanishes on $\cB^{\RB}_n$.
\begin{cor}[= Theorem \ref{corintro: bimeromorphic coh. invs}]
	Let $K$ be a field. Any $K$-linear combination of cohomological invariants (resp. refined Betti numbers) which is a universal bimeromorphism invariant can be written as a $K$-linear combination of multiplicities of zigzags (resp. refined Betti numbers) that are the coefficients of the basis elements in Lemma \ref{lem: bases for formal rings mod C}. 
\end{cor}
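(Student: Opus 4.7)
The strategy is to reduce the claim to Corollary \ref{cor: bimeromorphic invariants on formal rings}, which already settles the analogous statement for the formal rings $\RB^{form}_\ast$ and $\cU^{form}_\ast$, by comparing the geometric rings to their formal counterparts in the two degrees that matter, namely $n$ and $n-2$. I will write out the argument for the refined Betti case; the case of $\cU_\ast$ is obtained by replacing $\RB$ by $\cU$ verbatim.

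By Lemma \ref{lem: ideal bimeromorphic diffs}, a $K$-linear combination of refined Betti numbers of compact complex $n$-folds is a universal bimeromorphism invariant precisely when the corresponding functional $\lambda\colon\RB_n\to K$ vanishes on $C\cdot\RB_{n-2}$, i.e.~factors through $Q_n:=\RB_n/(C\cdot\RB_{n-2})$. Set $Q_n^{form}:=\RB^{form}_n/(C\cdot\RB^{form}_{n-2})$; by Corollary \ref{cor: bimeromorphic invariants on formal rings}, every $K$-valued functional on $Q_n^{form}$ is a $K$-linear combination of the coefficient functions attached to the basis of Lemma \ref{lem: bases for formal rings mod C}. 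The key observation is that as soon as $\RB_{n-2}=\RB^{form}_{n-2}$, one has $C\cdot\RB_{n-2}=C\cdot\RB^{form}_{n-2}$, so that the inclusion $\RB_n\subseteq\RB^{form}_n$ descends to an \emph{injective} map $Q_n\hookrightarrow Q_n^{form}$.

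Because $K$ is a field, any functional $\lambda$ on the subspace $Q_n$ extends (non-canonically) to a functional $\tilde\lambda$ on $Q_n^{form}$, which by the above is a $K$-linear combination of the desired coefficient functions. Restricting this expression back to $Q_n$ reproduces $\lambda$ in the required form, proving the theorem under the stated equality.

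It thus remains to verify the hypothesis $\RB_{n-2}=\RB^{form}_{n-2}$ in the claimed range. For $n\leq 5$ this amounts to $\RB_m=\RB^{form}_m$ for $m\leq 3$: the case $m\leq 2$ is Remark \ref{rem: generators for RB up to degree 2}, and $\RB_3=\RB^{form}_3$ is the remark closing the OT-manifold discussion of Section \ref{sec: constructions} (realising $L_3$ by an OT-threefold, $M_3$ by the odd-dimensional nilmanifold of Section \ref{sec: Mn}, and the remaining generator by the threefold with $b_3^{1,1}=1$ constructed there). For the universal ring the identical argument applies, but Proposition \ref{prop: universal ring in small degrees} only supplies $\cU_m=\cU^{form}_m$ for $m\leq 2$, which is what restricts the conclusion to $n\leq 4$. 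The principal obstacle to extending the $\cU$-case further is the unresolved realisation of $Sym_{S_{1,2}^{1,0}}(3)$ already highlighted in Proposition \ref{prop: universal ring in small degrees}, which by Proposition \ref{prop: no d201 on nilmfds} lies outside the reach of complex nilmanifolds.
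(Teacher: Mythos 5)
Your proof is correct and reproduces the paper's own argument: in both, Lemma~\ref{lem: ideal bimeromorphic diffs} reduces the claim to functionals on $\RB_n/(C\cdot\RB_{n-2})$ (resp.\ $\cU_n/(C\cdot\cU_{n-2})$), the equality of geometric and formal rings in degree $n-2$ gives injectivity of the map to the formal quotient, and Corollary~\ref{cor: bimeromorphic invariants on formal rings} finishes after extending the functional over the field $K$. The only minor difference is that you spell out the injectivity implication more explicitly than the paper's terse ``if and only if'' phrasing, and your parenthetical listing of the degree-$3$ generators for $\RB$ inadvertently drops $Sym_2^{2,1}(3)$ (realized by the type-$ii.b$ Iwasawa deformation), but this does not affect the argument.
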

\begin{proof}[Proof of Theorem \ref{corintro: bimeromorphic coh. invs}]
By Lemma \ref{lem: ideal bimeromorphic diffs}, there is a natural map $\eta:\cU/\cU_\ast\cdot\cB^{\cU}\to\cU_\ast^{form}/\cU_\ast^{form}\cdot C$. The map $\eta$ is injective in degree $n$ if and only if $\cU^{form}_{n-2}\cdot C\cap\cU_n=\cU_{n-2}\cdot C$. This holds for $n\leq 4$ since then $\cU_{n-2}^{form}=\cU_{n-2}$. Now let $K$ be some field and consider a $K$-linear combination of cohomological invariants of compact complex $n$-folds which is a universal bimeromorphism invariant, i.e. a map $\varphi:(\cU_\ast/ \cU^{\RB}\otimes K)_n\to K $. Since $\eta_K=\eta\otimes\Id$ is an injective map of vector spaces, we can find a $\varphi':(\cU_\ast^{form}/\cU_\ast^{form}\cdot C)_n\to K$ such that $\varphi=\varphi'\circ \eta_K$ and apply Corollary \ref{cor: bimeromorphic invariants on formal rings}. The case of $\RB_\ast$ is analogous, using that $\RB_{\leq 3}=\RB_{\leq 3}^{form}$.
\end{proof}
\section{Summary of some open problems}\label{sec: constr. probs}

Solving the following problem would show $\RB_\ast=\RB_\ast^{form}$:
\begin{prob}\label{prob: hahs}
For even $n\geq 4$, construct an $n$-dimensional compact complex manifold $X_n$ with $b_{n-1}^{n-1,n-1}(X_n)=1$, i.e. supporting a nonzero $n-1$ de Rham class, unique up to scalar, which can be represented by both a holomorphic and an antiholomorphic form.
\end{prob}
Ideally, such examples would also allow to show the slightly stronger statement
\[
Sym_{S_{n-1}^{n-1,n-1}}(n)=\img{hah}\overset{!}{\in}\cU_n.
\]
Apart from this and maybe some sporadic generators in small degrees, the main problem in showing a hypothetical equality $\cU_\ast=\cU_\ast^{form}$ should consist in finding differentials in the Fr\"olicher spectral sequence in minimal dimension and extremal degree:
\begin{prob}\label{prob: extremal FSS diffs}
For every $n\geq 3$, construct a $n$-dimensional compact complex manifold $X_n$ with nonvanishing differentials on page $E_{n-1}$ starting in degree $(0,n-1)$ or $(0,n-2)$. More precisely, where for readability we omit zigzags determined via the real structure, show that the following lie in $\cU_n$:
\[
Sym_{S_{1,n-1}^{0,n-1}}(n)=\img{d0n-1} \text{ and } Sym_{S_{1,n-1}^{0,n-2}}(n)=\img{d0n-2}
\]
For $n=3$, only the second one is missing (compare the statements of Theorem \ref{corintro: no unexp. rels coh. invs. ref. betttis} and Proposition \ref{prop: universal ring in small degrees}).
\end{prob}

A solution to this problem would in particular solve the following problem with $r(n)=n-1$ for $n\geq 3$:

\begin{prob}\label{prob: FSS maximal nondegeneracy}
Determine for every dimension $n$, the maximal number $r=r(n)$ such that the Fr\"olicher spectral sequence of an $n$-manifold may have a nonzero differential at page $r$.
\end{prob}

It is believed that it is possible to solve problem \ref{prob: FSS maximal nondegeneracy} using nilmanifolds with possibly high nilpotency step, see \cite{bigalke_erratum_2014}. By the examples in \cite{ceballos_invariant_2016} used in Section \ref{sec: h15} one has indeed $r(3)=2$. On the other hand, by Proposition \ref{prop: no d201 on nilmfds} it is already in dimension $3$ not possible to give a positive answer to the stronger problem \ref{prob: extremal FSS diffs} using nilmanifolds only. It would be interesting to see whether Problem \ref{prob: hahs} can be solved for four-dimensional nilmanifolds.\medskip

Even stronger than the question $\cU_\ast=\cU^{form}_\ast$, one may ask what universal linear relations between cohomological invariants and Chern numbers exist.

\begin{conj} Modulo the relations involving only cohomological invariants or only Chern numbers, the only $\Z$-linear relations or congruences between cohomological invariants and Chern numbers of compact complex manifolds are the ones induced by Hirzebruch-Riemann-Roch.
\end{conj}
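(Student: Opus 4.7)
The natural algebraic framework parallels Definition \ref{def: Hodge de Rham Chern ring} and Theorem \ref{thm: HDRC ring diagonal}: define the \textbf{universal cohomological Chern ring} as the image of the diagonal map
\[
\cU\cC_\ast := \operatorname{image}\bigl(\CM_\ast \longrightarrow \cU_\ast \times \Omega^U_\ast\bigr),
\]
and let $\chi_\ast\colon \cU_\ast \to \cH ir_\ast$ denote the Hirzebruch $\chi_y$-genus, which factors through the projection onto the dots-subring $\cH_\ast \subseteq \cU_\ast$ followed by the map $\chi_\ast\colon \cH_\ast \to \cH ir_\ast$ of Section \ref{sec: Hodge, Betti, Chern}. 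Modulo the known universal relations among Chern numbers and the conjectural description $\cU_\ast = \cU^{form}_\ast$, the conjecture is equivalent to the diagonal-type identity
\[
\cU\cC_\ast = \{(a,b) \in \cU_\ast \times \Omega^U_\ast \mid \chi_\ast(a) = td_\ast(b)\},
\]
the containment $\subseteq$ being Hirzebruch-Riemann-Roch.

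The plan for the reverse containment imitates the proof of Theorem \ref{thm: HDRC ring diagonal} verbatim. Given a pair $(a,b)$ on the right, choose a $\Z$-linear combination $Y$ of compact complex manifolds with $[Y]=b \in \Omega^U_\ast$. By HRR, $\chi_\ast(a - [Y]_{\cU}) = \chi_\ast(a) - td_\ast(Y) = 0$, so $a - [Y]_{\cU}$ lies in $K := \ker\chi_\ast \subseteq \cU_\ast$. It then suffices to realize each element of $K$ as $[M]_{\cU}$ for $M$ a $\Z$-linear combination of compact complex manifolds with vanishing rational Chern classes, because then $[M] = 0 \in \Omega^U_\ast$ and $(a,b) = ([Y \sqcup M]_{\cU}, [Y \sqcup M])$ lies in the diagonal image.

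The kernel $K$ has two kinds of generators. First, every symmetrised zigzag class $Sym_Z(n)$ with $Z$ a non-dot zigzag projects to zero in $\cH_\ast$, hence lies in $K$. Second, $K \cap \cH_\ast$ equals $\ker(\chi_\ast\colon \cH_\ast \to \cH ir_\ast)$, which by the Lemma preceding Theorem \ref{thm: HDRC ring diagonal} is already generated by nilmanifolds with left-invariant complex structure, and these have vanishing Chern classes by Observation \ref{obs: c_i=0}. The whole content of the conjecture therefore lies in the first kind: one must realise each non-dot zigzag generator of $\cU^{form}_\ast$ by a $\Z$-linear combination of manifolds with vanishing Chern classes. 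Encouragingly, every non-dot zigzag generator produced in Sections \ref{sec: constructions}, \ref{sec: Mn}, and \ref{sec: h15} was obtained from a nilmanifold, which by Observation \ref{obs: c_i=0} has the required property automatically.

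The main obstacle is therefore a Chern-refined strengthening of the open construction problems \ref{prob: hahs} and \ref{prob: extremal FSS diffs}: each still-missing zigzag generator must be realised with vanishing rational Chern classes. Proposition \ref{prop: no d201 on nilmfds} shows that nilmanifolds already fail for certain extremal Fr\"olicher differentials in dimension three, so for those generators one would need to exhibit non-nilmanifold realisations and then cancel any stray Chern numbers by adding bordism-trivial correction terms. The feasibility of such cancellations is itself constrained precisely by the HRR-compatibility $\chi_\ast(a) = td_\ast(b)$ sitting at the heart of the conjecture, which is what makes this strengthening plausible but subtle.
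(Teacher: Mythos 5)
Your overall strategy---mirror Theorem~\ref{thm: HDRC ring diagonal}, reduce the conjecture to realizing $\ker\chi_\ast \subseteq \cU_\ast$ by linear combinations of Chern-trivial manifolds, and note that $\CP^1,\CP^2$ handle the image of $\chi_\ast$---is exactly what the paper proposes in the remark that follows the conjecture. But your analysis of the kernel contains a genuine error. You claim that ``every symmetrised zigzag class $Sym_Z(n)$ with $Z$ a non-dot zigzag projects to zero in $\cH_\ast$, hence lies in $K$.'' This is false. The map $\cU_\ast \to \cH_\ast$ records the Dolbeault cohomology, i.e.\ the Hodge polynomial, and non-dot zigzags have nonzero Dolbeault cohomology in general: every column of a zigzag containing exactly one entry contributes $1$ to some $h^{p,q}$. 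Concretely, the odd length-three zigzag $S_1^{0,0}$ (support $(0,1),(1,1),(1,0)$) has $h^{0,1}=1$, so $Sym_{S_1^{0,0}}(2)$ has Hodge polynomial $(y+x^2y)z^2\neq 0$, and its Hirzebruch genus is $\chi_\ast\big(Sym_{S_1^{0,0}}(2)\big)=-(1+x^2)z^2 \neq 0$. This is consistent with its geometric realization in Remark~\ref{rem: generators for RB up to degree 2} as $[H]-2[\CP^2]+[\CP^1]^2$, which is visibly not Chern-trivial. More structurally: by the Euler--Poincar\'e principle, any odd-length zigzag $Z$ has topological Euler characteristic $\chi(Z)=\pm1$, so $\chi_\ast$ cannot vanish on it, symmetrized or not. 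Thus your two-part decomposition of $K$ breaks down already on the odd-zigzag generators $L_n = Sym_{n-1}^{0,0}(n)$, precisely the ones the paper realizes using OT-manifolds (which are solvmanifolds, not nilmanifolds) together with lower-degree terms involving $\CP^2$.

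A secondary but related confusion: $\cH_\ast$ (the Hodge ring of all compact complex manifolds) is a \emph{quotient} of $\cU_\ast$, not a subring, and the subring of $\cU_\ast$ spanned by dots is $\cH^K_\ast$, not $\cH_\ast$; so ``the projection onto the dots-subring $\cH_\ast\subseteq\cU_\ast$'' and ``$K\cap\cH_\ast$'' are not well-posed as written. The paper's remark circumvents all of this bookkeeping by phrasing the sufficient condition purely in terms of a generating set: exhibit generators of $\cU_\ast$ consisting of $\CP^1$, $\CP^2$, and otherwise only manifolds with vanishing Chern classes, after which the argument of Theorem~\ref{thm: HDRC ring diagonal} applies verbatim. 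Some generators of $\cU_\ast$ with nonzero genus (such as $L_n$ above) are then absorbed as polynomials in $\CP^1$, $\CP^2$ plus Chern-trivial pieces, rather than being declared to lie in $\ker\chi_\ast$ outright.
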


\begin{rem}
If one finds a generating set for $\cU_\ast$ consisting of $\CP^1,\CP^2$ and otherwise only manifolds with vanishing Chern classes (e.g. complex nil- or solvmanifolds), this would follow with the same proof as Theorem \ref{thm: HDRC ring diagonal}. 
\end{rem}

For completeness, note that to address the integral version of the Hirzebruch question for Hodge and Betti numbers, in Section \ref{sec: rat. Hirzebruch} it remained to solve the following:
\begin{prob}\label{prob: 4-folds for integral statement}
Find $\Z$-linear combinations $Z=\sum a_i\cdot (X_i-Y_i)$, where $X_i$ and $Y_i$ are homeomorphic, or even orientation preservingly diffeomorphic, complex $4$-folds, such that each of the following can be realized as $h(Z)$:
\begin{align*}
CT&=(xy^2-x^2y+x^3y^2-x^2y^3)z^4\\
C\tilde{d}&=(x^2y+2x^2y^2+x^2y^3)z^4\\
D\tilde{d}&=(x^2+2x^2y+2x^2y^2+2x^2y^3+x^2y^4)z^4.
\end{align*} 
\end{prob}

Finally, it seems natural to ask the following generalization of Hirzebruch's question:

\begin{qu}
Which linear combinations or congruences of cohomological invariants and Chern numbers are topological invariants of compact complex manifolds?
\end{qu}

\bibliographystyle{acm}

Jonas Stelzig, Mathematisches Institut der Ludwig-Maximilians-Universit\"at M\"unchen, Theresienstraße 39, 80333 M\"unchen. jonas.stelzig@math.lmu.de.
\end{document}